\documentclass[12pt]{amsart}
\usepackage{a4wide}
\usepackage{amssymb}
\usepackage{amsthm}
\usepackage{amsmath}
\usepackage{amscd}
\usepackage{verbatim}
\usepackage{mathrsfs}
\usepackage{dsfont,bbm} 
\usepackage{youngtab}
\usepackage[usenames]{color}
\usepackage{tikz,graphicx}
\usepackage{hyperref}

\numberwithin{equation}{section}
\theoremstyle{plain}
\newtheorem{theorem}{Theorem}[section]
\newtheorem{corollary}[theorem]{Corollary}
\newtheorem{lemma}[theorem]{Lemma}
\newtheorem{proposition}[theorem]{Proposition}

\theoremstyle{definition}

\newtheorem*{example}{Example}
\theoremstyle{remark}
\newtheorem*{remark}{Remark}
\newtheorem*{remarks}{Remarks}
\newtheorem*{Note}{Note added}

\newcommand{\blue}[1]{{\color{blue}  #1}}

\textheight8.25in \textwidth6.35in 

\DeclareMathOperator{\lev}{lev}
\DeclareMathOperator{\Den}{Den}
\DeclareMathOperator{\SL}{SL}
\DeclareMathOperator{\GL}{GL}
\DeclareMathOperator{\Gal}{Gal}
\DeclareMathOperator{\lcm}{lcm}
\DeclareMathOperator{\im}{Im}
\DeclareMathOperator{\ord}{ord}
\DeclareMathOperator{\ch}{ch}
\DeclareMathOperator{\mult}{mult}
\newcommand{\eup}{\textrm{e}}
\newcommand{\iup}{\textrm{i}}
\renewcommand{\H}{\mathbb{H}}
\newcommand{\Rat}{\mathbb Q}

\newcommand{\N}{\mathbb N}

\newcommand{\Z}{\mathbb Z}
\newcommand{\NN}{\Z_{+}}

\newcommand{\Symm}{\mathfrak{S}}
\newcommand{\M}{\mathscr{M}}
\newcommand{\abs}[1]{\lvert#1\rvert}
\newcommand{\la}{\lambda}
\newcommand{\La}{\Lambda}
\newcommand{\ip}[2]{\langle#1,#2\rangle}

\newcommand{\A}{\mathrm A}
\newcommand{\B}{\mathrm B}
\newcommand{\C}{\mathrm C}
\newcommand{\D}{\mathrm D}

\newcommand{\qbin}[2]{\genfrac{[}{]}{0pt}{}{#1}{#2}}

\newcommand{\hfrak}{\mathfrak{h}}

\begin{document}

\title[Rogers--Ramanujan identities and their arithmetic properties]
{A framework of Rogers--Ramanujan identities and their arithmetic properties}

\author{Michael J.~Griffin, Ken Ono, and S.~Ole Warnaar}

\address{Department of Mathematics, 
Princeton University, Princeton, NJ 08544, USA}
\email{mjg4@princeton.edu}

\address{Department of Mathematics and Computer Science,
Emory University, Atlanta, Georgia 30322, USA}
\email{ono@mathcs.emory.edu}

\address{School of Mathematics and Physics,
The University of Queensland, Brisbane, Australia}
\email{o.warnaar@maths.uq.edu.au}

\thanks{The first two authors are supported by the NSF, and the third 
author is supported by the Australian Research Council.
The second author also thanks the support of the Asa Griggs Candler Fund.}

\dedicatory{In memory of Basil Gordon and Alain Lascoux}

\subjclass[2010]{05E05, 05E10, 11P84, 11G16, 17B67, 33D67}

\begin{abstract} 
The two Rogers--Ramanujan $q$-series
\[
\sum_{n=0}^{\infty}\frac{q^{n(n+\sigma)}}{(1-q)\cdots (1-q^n)},
\]
where $\sigma=0,1$, play many roles in mathematics and physics.
By the Rogers--Ramanujan identities, they are essentially 
modular functions.
Their quotient, the Rogers--Ramanujan continued fraction, 
has the special property that its \textit{singular values}
are algebraic integral units. 
We find a framework which extends the Rogers--Ramanujan identities to
doubly-infinite families of $q$-series identities.
If $a\in \{1,2\}$ and  $m,n \geq 1$, then we have
\[
\sum_{\substack{\la \\[1pt] \la_1\leq m}}
q^{a\abs{\la}} P_{2\la}(1,q,q^2,\dots;q^{n})
=\textup{``infinite product modular function'',}
\]
where the $P_{\lambda}(x_1,x_2,\dots;q)$ are 
\textit{Hall--Littlewood polynomials}.
These $q$-series
are specialized characters of affine Kac--Moody
algebras. 
Generalizing the Rogers--Ramanujan continued fraction, 
we prove in the case of $\A_{2n}^{(2)}$ that
the relevant $q$-series quotients are
integral units.
\end{abstract}

\maketitle

\section{Introduction}\label{Intro}
The Rogers--Ramanujan (RR) identities \cite{Rogers94}
\begin{equation}\label{G}
G(q):=\sum_{n=0}^\infty\frac{q^{n^2}}{(1-q)\cdots(1-q^n)}
=\prod_{n=0}^\infty\frac{1}{(1-q^{5n+1})(1-q^{5n+4})}
\end{equation}
and 
\begin{equation}\label{H}
H(q):=\sum_{n=0}^\infty\frac{q^{n^2+n}}{(1-q)\cdots(1-q^n)}
=\prod_{n=0}^\infty\frac{1}{(1-q^{5n+2})(1-q^{5n+3})}
\end{equation}
play many roles in mathematics and physics.
They are essentially modular functions, and their ratio $H(q)/G(q)$ 
is the famous Rogers--Ramanujan $q$-continued fraction  
\begin{equation}\label{GH}
\frac{H(q)}{G(q)}=\cfrac{1}{1+\cfrac{q}{1+\cfrac{q^2}{1+\cfrac{q^3}
{\,\ddots}}}}\,.
\end{equation}

The \textit{golden ratio} $\phi$ satisfies $H(1)/G(1)=1/\phi=(-1+\sqrt{5})/2$.
Ramanujan computed further values
such as\footnote{He offered this value in his first letter to Hardy 
(see p.~29 of \cite{Ramanujan}).}
\begin{equation}\label{firstletter}
\eup^{-\tfrac{2\pi}{5}}\cdot \frac{H(\eup^{-2\pi})}{G(\eup^{-2\pi})}=
\sqrt{\frac{5+\sqrt{5}}{2}}-\frac{\sqrt{5}+1}{2}.
\end{equation}
The minimal polynomial  of this value is
\[
x^4+2x^3-6x^2-2x+1,
\]
which shows that it is an algebraic integral unit. 
All of Ramanujan's evaluations are such units.

Ramanujan's evaluations inspired early work by Watson 
\cite{MM,Watson1,Watson2} and Ramanathan \cite{Ramanathan}.
Then in 1996, Berndt, Chan and Zhang \cite{BerndtChan}\footnote{Cais 
and Conrad \cite{CaisConrad} and Duke \cite{Duke} later revisited 
these results from the perspective of arithmetic geometry and the symmetries 
of the regular icosahedron respectively.} 
finally obtained  general theorems concerning such values. 
The theory pertains to values at $q:=\eup^{2\pi\iup\tau}$, where the 
$\tau$ are quadratic irrational points in the upper-half of 
the complex plane.  We refer to such a point $\tau$ as a \textit{CM point} with 
\textit{discriminant} $-D<0$, where $-D$ is the
discriminant of the minimal polynomial of $\tau$.  The
corresponding evaluation is known as a \textit{singular value}.
Berndt, Chan and Zhang proved that the singular values
$q^{-1/60}G(q)$ and $q^{11/60}H(q)$ 
are algebraic numbers
in abelian extensions of  $\Rat(\tau)$ which satisfy the exceptional property  
(see \cite[Theorem 6.2]{BerndtChan}) that their ratio 
$q^{{1/5}}H(q)/G(q)$ is an algebraic integral unit which generates 
specific abelian extensions of $\Rat(\tau)$.

\smallskip
\begin{remark}
The individual values of $q^{-1/60}G(q)$ and 
$q^{11/60}H(q)$ generically are not algebraic integers. 
For example, in \eqref{firstletter} we have $\tau=\iup$, 
and the numerator and denominator 
\[
q^{-\tfrac{1}{60}}G(q)=
\sqrt[4]{\frac{1+3\sqrt{5}+2\sqrt{10+2\sqrt{5}}}{10}}
\quad\text{and}\quad 
q^{\tfrac{11}{60}}H(q)=
\sqrt[4]{\frac{1+3\sqrt{5}-2\sqrt{10+2\sqrt{5}}}{10}}
\]
share the  minimal polynomial $625x^{16}-250x^{12}-1025x^8-90x^4+1.$
\end{remark}

In addition to the algebraic properties described above, 
\eqref{G} and \eqref{H} have been related to a 
large number of different areas of mathematics.
They were were first recognized by MacMahon and Schur 
as identities for integer partitions \cite{MacMahon60,Schur17},
but have since been linked to algebraic geometry \cite{BMS13,GOR13}, 
$\mathrm{K}$-theory \cite{DS94}, conformal field 
theory \cite{BM98,KMM95,LP85}, 
group theory \cite{Fulman00},
Kac--Moody, Virasoro, vertex and double affine Hecke algebras 
\cite{CF12,FF93,LM78a,LM78b,LW81a,LW81b,LW82,LW84},
knot theory \cite{AD11,Hikami03,Hikami06}, 
modular forms \cite{BCFK, BFM, BF0, BF, BM, BO, BOR08, Ono09},
orthogonal polynomials \cite{AI83,Bressoud81,GIS99},
statistical mechanics \cite{ABF84,Baxter81}, 
probability \cite{Fulman01} and
transcendental number theory \cite{RS81}.

In 1974 Andrews \cite{Andrews74} extended  \eqref{G} and \eqref{H}
to an infinite family of Rogers--Rama\-nu\-jan-type identities by 
proving that
\begin{equation}\label{Eq_AG}
\sum_{r_1\geq\dots\geq r_m\geq 0}
\frac{q^{r_1^2+\cdots+r_m^2+r_i+\cdots+r_m}}
{(q)_{r_1-r_2}\cdots(q)_{r_{m-1}-r_m}(q)_{r_m}}=
\frac{(q^{2m+3};q^{2m+3})_{\infty}}{(q)_{\infty}}\, \cdot
\theta(q^i;q^{2m+3}),
\end{equation}
where $1\leq i\leq m+1$.
As usual, here we have that
\[
(a)_k=(a;q)_k:= \begin{cases} 
(1-a)(1-aq)\cdots (1-aq^{k-1}) \quad &\text{if $k\geq 0$},\\[2mm]
\displaystyle \prod_{j=0}^{\infty} (1-aq^j)  &\text{if $k=\infty$},
\end{cases}
\]
and 
\[
\theta(a;q):=(a;q)_{\infty}(q/a;q)_{\infty}
\]
is a modified theta function.
The identities \eqref{Eq_AG}, which can be viewed as the analytic counterpart 
of Gordon's partition theorem \cite{Gordon61}, are now commonly referred to
as the Andrews--Gordon (AG) identities.

\smallskip
\begin{remark}
The specializations of $\theta(a;q)$  in \eqref{Eq_AG}
are (up to powers of $q$) modular functions, where 
$q:=\eup^{2\pi\iup\tau}$ and $\tau$ is any complex point with $\im(\tau)>0$. 
It should be noted 
that this differs from our use of $q$ and $\tau$ above where we 
required $\tau$ to be a quadratic irrational point. 
Such infinite product modular functions
were studied extensively by  Klein and Siegel.
\end{remark}

There are numerous algebraic interpretations of the 
Rogers--Ramanujan and Andrews--Gordon identities.
For example, the above-cited papers by Milne, Lepowsky and Wilson 
show that they arise, up to a factor $(q;q^2)_{\infty}$,
as principally specialized characters of 
integrable highest-weight modules of the affine Kac--Moody algebra 
$\A_1^{(1)}$. Similarly, Feigin and Frenkel proved the
Rogers--Ramanujan and Andrews--Gordon identities by 
considering certain irreducible minimal representations of the 
Virasoro algebra \cite{FF93}.
We should also mention the much larger program by Lepowsky 
and others on combinatorial and algebraic extensions
of Rogers--Ramanujan-type identities, leading to the introduction of
$Z$-algebras for all affine Lie algebras,
vertex-operator-theoretic proofs of generalized Rogers--Ramanujan identities, 
and Rogers--Ramanujan-type identities for arbitrary affine Lie algebras
in which, typically, the sum side is replaced by a combinatorial sum,
see e.g., \cite{FLM88,Lepowsky82,MP96} and references therein.

In this paper we have a similar but distinct aim, namely to find a 
concrete framework of Rogers--Ramanujan type identities in the
$q$-series sense of ``infinite sum $=$ infinite product'',
where the infinite products  
arise as specialized characters of appropriately 
chosen affine Lie algebras $\mathrm{X}_N^{(r)}$ for arbitrary $N$.
Such a general framework would give new  connections between 
Lie algebras and the theory of modular functions.

In \cite{ASW99} (see also \cite{FFW08,Warnaar06}) some results 
concerning the above question were obtained, resulting in 
Rogers--Ramanujan-type identities for $\A_2^{(1)}$. 
The approach of \cite{ASW99} 
does not in any obvious manner extend to $\A_n^{(1)}$ for all $n$,
and this paper aims to give a more complete answer.
By using a level-$m$ Rogers--Selberg identity for the root system
$\C_n$ as recently obtain by Bartlett and the third author \cite{BW13},
we show that the Rogers--Ramanujan and Andrews--Gordon identities
are special cases 
of a doubly-infinite family of $q$-identities arising from the 
Kac--Moody algebra $\A_{2n}^{(2)}$ for arbitrary $n$.
In their most compact form, the ``sum-sides'' are expressed in terms of 
Hall--Littlewood polynomials $P_{\la}(x;q)$
evaluated at infinite geometric progressions (see Section~\ref{Sec_HL}
 for definitions and further details), and the 
``product-sides'' are essentially products of modular theta functions.
We shall present four pairs $(a,b)$ such that for all $m,n \geq 1$
we have an identity of the form
\[
\sum_{\substack{\la \\[1pt] \la_1\leq m}}
q^{a\abs{\la}} P_{2\la}(1,q,q^2,\dots;q^{2n+b}) \\
=\text{``infinite product modular function''}.
\]

To make this precise, we fix notation for \textit{integer partitions}, 
nonincreasing sequences of nonnegative integers with at most finitely many 
nonzero terms.
For a partition $\la=(\la_1,\la_2,\dots)$, we let 
$\abs{\la}:=\la_1+\la_2+\cdots$, and we let
$2\la:=(2\la_1,2\la_2,\dots)$. 
We also require $\la'$, the \textit{conjugate} of $\la$, the partition 
which is obtained by transposing the Ferrers--Young diagram of $\la$.
Finally, for convenience we let
\begin{equation}
\theta(a_1,\dots,a_k;q):=\theta(a_1;q)\cdots\theta(a_k;q).
\end{equation}

\begin{example}
If $\la=(5,3,3,1)$, then we have that $\abs{\la}=12$, $2\la=(10,6,6,2)$
and $\la'=(4,3,3,1,1)$.
\end{example}

Using this notation, we have the following pair of doubly-infinite 
Rogers--Ramanujan type identities which correspond to specialized 
characters of $\A_{2n}^{(2)}$.

\begin{theorem}[$\A_{2n}^{(2)}$ RR and AG identities]\label{Thm_Main}
If $m$ and $n$ are positive integers and $\kappa:=2m+2n+1$, then
we have that
\begin{subequations}\label{Eq_RR-A2n2}
\begin{align}\label{Eq_RR-A2n2a}
\sum_{\substack{\la \\[1pt] \la_1\leq m}}
q^{\abs{\la}} & P_{2\la}\big(1,q,q^2,\dots;q^{2n-1}\big) \\
&=\frac{(q^{\kappa};q^{\kappa})_{\infty}^n}{(q)_{\infty}^n}\cdot 
\prod_{i=1}^n  \theta\big(q^{i+m};q^{\kappa}\big)
\prod_{1\leq i<j\leq n} 
\theta\big(q^{j-i},q^{i+j-1};q^{\kappa}\big) \notag \\
&=\frac{(q^{\kappa};q^{\kappa})_{\infty}^m}{(q)_{\infty}^m}\cdot 
\prod_{i=1}^m  \theta\big(q^{i+1};q^{\kappa}\big)
\prod_{1\leq i<j\leq m} 
\theta\big(q^{j-i},q^{i+j+1};q^{\kappa}\big), \notag
\intertext{and}
\label{Eq_RR-A2n2b}
\sum_{\substack{\la \\[1pt] \la_1\leq m}}
q^{2\abs{\la}} & P_{2\la}\big(1,q,q^2,\dots;q^{2n-1}\big) \\
&=\frac{(q^{\kappa};q^{\kappa})_{\infty}^n}{(q)_{\infty}^n}\cdot 
\prod_{i=1}^n  \theta\big(q^i;q^{\kappa}\big)
\prod_{1\leq i<j\leq n} 
\theta\big(q^{j-i},q^{i+j};q^{\kappa}\big) \notag \\
&=\frac{(q^{\kappa};q^{\kappa})_{\infty}^m}{(q)_{\infty}^m}\cdot 
\prod_{i=1}^m  \theta\big(q^i;q^{\kappa}\big)
\prod_{1\leq i<j\leq m} 
\theta\big(q^{j-i},q^{i+j};q^{\kappa}\big). \notag
\end{align}
\end{subequations}
\end{theorem}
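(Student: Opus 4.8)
The plan is to prove both identities by reducing each Hall--Littlewood sum to an explicit multiple $q$-series via the principal specialization $x_i=q^{i-1}$, and then evaluating that series as a ratio of a finite theta product by a Weyl-group symmetrization by the numerator, and the Macdonald (affine denominator) identity for the twisted root system attached to $\A_{2n}^{(2)}$ for the denominator. The organizing principle is that the left-hand side should be a principally specialized character of an integrable $\A_{2n}^{(2)}$-module whose level is governed by $m$; the Weyl--Kac formula then predicts exactly the shape of each right-hand side, and verifying this prediction \emph{is} the proof.

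First I would record a closed form for $P_{2\la}(1,q,q^2,\dots;q^{2n-1})$. Applying the principal specialization formula for Hall--Littlewood polynomials, with geometric base $q$ and parameter $t=q^{2n-1}$, turns this factor into an explicit ratio of $q$-shifted factorials indexed by the multiplicities of $\la$ (equivalently by the parts of $\la'$). Substituting this into the sums converts each of \eqref{Eq_RR-A2n2a} and \eqref{Eq_RR-A2n2b} into a concrete $q$-hypergeometric series, with the restriction $\la_1\le m$ translating into the statement that $\la'$ has at most $m$ parts. The weight $q^{a\abs{\la}}$ enters only as an overall monomial, and is precisely what separates the two families $a=1$ and $a=2$.

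The technical heart is the symmetrization that identifies this series with a Weyl--Kac numerator over a Weyl denominator. I would introduce new summation indices $\ell_i$ built from partial sums of $\la$ (or $\la'$) shifted by the Weyl vector, and argue that, after multiplication by the appropriate Vandermonde-type factor, the summand becomes antisymmetric under the hyperoctahedral group $W(\C_n)$ of signed permutations. By repeated use of the Jacobi triple product the symmetrized numerator then collapses into $\prod_{i=1}^n\theta(q^{i+m};q^{\kappa})\prod_{1\le i<j\le n}\theta(q^{j-i},q^{i+j-1};q^{\kappa})$ (and its $a=2$ variant, with $q^{i+j}$ in place of $q^{i+j-1}$), where $\kappa=2m+2n+1$ is produced by the doubling $\la\mapsto 2\la$ together with the shift $2n-1$; simultaneously the Weyl denominator is evaluated by the Macdonald identity for $\A_{2n}^{(2)}$, giving the infinite product $(q^{\kappa};q^{\kappa})_{\infty}^n/(q)_{\infty}^n$. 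Dividing numerator by denominator yields the first (rank-$n$) product on each right-hand side. A useful consistency check is $n=1$: there $q^{2n-1}=q$, the principal specialization of $P_{2\la}(\,\cdot\,;q)$ reduces to a single Gaussian binomial, the $\la$-sum becomes an $m$-fold sum, and the identity must reduce to the Andrews--Gordon identities \eqref{Eq_AG} at modulus $2m+3$.

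Finally, the second (rank-$m$) product on each right-hand side would follow from the symmetry $m\leftrightarrow n$ inherent in the construction: the same lattice sum admits a dual evaluation interchanging the roles of level and rank, so that the equality of the two products is a level--rank duality which can also be checked directly as a theta-function identity. I expect the main obstacle to be exactly the symmetrization step --- extracting the correct antisymmetrizing factor from the principally specialized $P_{2\la}$ and verifying that the residual alternating sum telescopes into precisely the theta arguments $q^{j-i}$, $q^{i+j-1}$ (and $q^{i+j}$ for $a=2$) rather than some neighbouring product. Careful bookkeeping of the Weyl-vector shift, of the doubling $\la\mapsto 2\la$, and of the correct twisted affine root data will be the crux; once the numerator is matched, the denominator is handled by the Macdonald identity and the remainder is standard theta-product manipulation.
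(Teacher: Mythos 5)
There is a genuine gap, and it occurs at the very first step of your plan: there is no closed product form for $P_{2\la}(1,q,q^2,\dots;q^{2n-1})$ once $n\geq 2$. The principal specialization formula you invoke requires the Hall--Littlewood parameter to coincide with the ratio of the geometric progression (the case $t=q$, which is exactly what the paper uses in its second remark after the theorem to recover the Andrews--Gordon sums). For parameter $t=q^{2n-1}$ over the base-$q$ progression, the best available expansion is the Kirillov-type formula recorded in Lemma~\ref{TwoPointFour}: a sum over flags of partitions $\mu^{(n)}\subseteq\cdots\subseteq\mu^{(1)}\subseteq\mu^{(0)}$, i.e., a nested multiple sum for each $\la$, not ``an explicit ratio of $q$-shifted factorials indexed by the multiplicities of $\la$''. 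So the single-index ``concrete $q$-hypergeometric series'' you propose to symmetrize does not exist in the claimed shape.

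More fundamentally, your organizing principle --- that the left-hand side is a principally specialized $\A_{2n}^{(2)}$ character and that ``verifying this prediction is the proof'' --- assumes precisely the hard half of the theorem. Evaluating the specialized character as a theta product is the routine half: it is the paper's formula \eqref{Eq_principal}, which follows from Weyl--Kac together with a Macdonald-type identity (in fact a $\B_n^{(1)}$-type variant \eqref{Eq_Gustafson} obtained from Gustafson's $\A_{2n-1}^{(2)}$ summation, not literally the $\A_{2n}^{(2)}$ Macdonald identity). The substantive content is the identification of the Hall--Littlewood sum with that character, and no direct $W(\C_n)$-antisymmetrization of the sum side is known. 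The paper gets this identification from the $\C_n$ Andrews transformation of Bartlett--Warnaar in the form of the level-$m$ Rogers--Selberg identity \eqref{Eq_Cn-RS-m}, and then --- because, as the paper states explicitly, substituting \eqref{Eq_spec} directly into the right-hand side of \eqref{Eq_Cn-RS-m} and producing a product form is very difficult --- it deploys the rank-doubling/limit device: replace $x\mapsto(x_1,y_1,\dots,x_{n-1},y_{n-1},x_n)$, take $y_i\to x_i^{-1}$ via the key lemma \eqref{Eq_key}, then let $x_n\to 1$ for \eqref{Eq_RR-A2n2a} or set $x_n=q^{1/2}$ for \eqref{Eq_RR-A2n2b}, folding $\Delta_{\C}$ into $\Delta_{\B}$ with the multiplicity factor $\phi_{r_n}$ of \eqref{Eq_phi}, and only afterwards specializing and applying the determinant and Macdonald machinery. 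These limit steps are the crux of the published proof and are entirely absent from your outline; without \eqref{Eq_Cn-RS-m} and \eqref{Eq_key}, or a genuine substitute such as a new Bailey-type argument, the sentence ``introduce indices built from partial sums of $\la$ and argue the summand becomes antisymmetric'' is a restatement of the theorem rather than a route to it. Your $n=1$ consistency check and the level--rank remark about the equality of the two product sides are correct but do not touch this gap.
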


\smallskip
\begin{remarks}
(1) When $m=n=1$, Theorem~\ref{Thm_Main} gives the 
Rogers--Ramanujan identities \eqref{G} and \eqref{H}. 
The summation defining the series is over the empty partition,
$\la=0$, and partitions consisting of $n$ copies of 1, i.e.,
$\la=(1^n)$.
Since
\[
q^{(\sigma+1)|(1^n)|} P_{(2^n)}(1,q,q^2,\dots;q)=
\frac{q^{n(n+\sigma)}}{(1-q)\cdots (1-q^n)},
\]
identities \eqref{G} and \eqref{H} thus follow from 
Theorem~\ref{Thm_Main} by letting $\sigma=0,1$.

\noindent
(2) When $n=1$, Theorem~\ref{Thm_Main}
 gives the $i=1$ and the $i=m+1$ instances of the Andrews--Gordon identities 
in a representation due to Stembridge \cite{Stembridge90} 
(see also Fulman \cite{Fulman00}).
The equivalence with \eqref{Eq_AG}
follows from the specialization formula \cite[p. 213]{Macdonald95}
\[
q^{(\sigma+1)\abs{\la}} P_{2\la}(1,q,q^2,\dots;q)=
\prod_{i\geq 1} \frac{q^{r_i(r_i+\sigma)}}{(q)_{r_i-r_{i+1}}},
\]
where $r_i:=\la'_i$. 
Note that $\la_1\leq m$ implies that $\la'_i=r_i=0$ for $i>m$.

\noindent
(3) We note the beautiful level-rank duality exhibited by the products
on the right-hand sides of the expressions in Theorem~\ref{Thm_Main}
(especially those of \eqref{Eq_RR-A2n2b}). 

\noindent
(4) In the next section we shall show that the more general 
series
\begin{equation}\label{star}
\sum_{\substack{\la \\[1pt] \la_1\leq m}}
q^{(\sigma+1)\abs{\la}} P_{2\la}\big(1,q,q^2,\dots;q^{n}\big) 
\end{equation}
are also expressible in terms of $q$-shifted factorials, allowing for
a formulation of Theorem~\ref{Thm_Main} (see Lemma~\ref{TwoPointFour})
which is independent of Hall--Littlewood polynomials.
\end{remarks}

\medskip
\begin{example} Here we illustrate Theorem~\ref{Thm_Main} when
$m=n=2$. 
Then
\eqref{Eq_RR-A2n2a} is
\[
\sum_{\substack{\la \\[1pt] \la_1\leq 2}}
q^{\abs{\la}} P_{2\la}\big(1,q,q^2,\dots;q^{3}\big)=
\prod_{n=1}^{\infty}\frac{(1-q^{9n})}{(1-q^n)},
\]
giving another expression for the $q$-series studied by Dyson in his ``A walk through 
Ramanujan's Garden" \cite{Garden}:

\smallskip
\noindent
\textit{``The end of the war was not in sight. 
In the evenings of that winter I kept sane by wandering in Ramanujan's garden. 
\dots I found a lot of identities of the sort that Ramanujan 
would have enjoyed. My favorite one was this one:
\[
\sum_{n=0}^{\infty}x^{n^2+n}\cdot
\frac{(1+x+x^2)(1+x^2+x^4)\cdots(1+x^n+x^{2n})}{(1-x)(1-x^2)\cdots
(1-x^{2n+1})}=\prod_{n=1}^{\infty}\frac{(1-x^{9n})}{(1-x^n)}.
\]
In the cold dark evenings, while I was scribbling these beautiful identities amid the
death and destruction of 1944, I felt close to Ramanujan. He had been scribbling
even more beautiful identities amid the death and destruction of 1917.''}

\smallskip

The series in \eqref{Eq_RR-A2n2b} is
\[
\sum_{\substack{\la \\[1pt] \la_1\leq 2}}
q^{2\abs{\la}} P_{2\la}\big(1,q,q^2,\dots;q^{3}\big)=
\prod_{n=1}^{\infty}
\frac{(1-q^{9n})(1-q^{9n-1})(1-q^{9n-8})}{(1-q^n)(1-q^{9n-4})(1-q^{9n-5})}.
\]
\end{example}

\medskip

We also have an even modulus analog of Theorem~\ref{Thm_Main}.
Surprisingly, the $a=1$ and $a=2$ cases correspond to 
dual affine Lie algebras, namely $\C_n^{(1)}$ and $\D_{n+1}^{(2)}$.

\begin{theorem}[$\C_n^{(1)}$ RR and AG identities]\label{Thm_Main2}
If $m$ and $n$ are positive integers and $\kappa:=2m+2n+2$, then we have that
\begin{align}\label{Eq_RR-Cn}
\sum_{\substack{\la \\[1pt] \la_1\leq m}}
q^{\abs{\la}} & P_{2\la}\big(1,q,q^2,\dots;q^{2n}\big) \\
&=\frac{(q^2;q^2)_{\infty}(q^{\kappa/2};q^{\kappa/2})_{\infty}
(q^{\kappa};q^{\kappa})_{\infty}^{n-1}}{(q)_{\infty}^{n+1}}  
\cdot 
\prod_{i=1}^n  \theta\big(q^i;q^{\kappa/2}\big)
\prod_{1\leq i<j\leq n} \theta\big(q^{j-i},q^{i+j};q^{\kappa}\big)
\notag \\
&=\frac{(q^{\kappa};q^{\kappa})_{\infty}^m}{(q)_{\infty}^m} 
\cdot \prod_{i=1}^m  \theta\big(q^{i+1};q^{\kappa}\big)
\prod_{1\leq i<j\leq m} 
\theta\big(q^{j-i},q^{i+j+1};q^{\kappa}\big). \notag
\end{align}
\end{theorem}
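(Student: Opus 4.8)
The plan is to establish Theorem~\ref{Thm_Main2} in close parallel with Theorem~\ref{Thm_Main}, exploiting the fact that the two sum-sides are instances of the general series \eqref{star} (with $\sigma=0$): the Hall--Littlewood parameter is $q^{2n}$ here and $q^{2n-1}$ in Theorem~\ref{Thm_Main}, and in both cases the modulus obeys $\kappa = 2m + N + 2$ with $N$ the Hall--Littlewood parameter. First I would invoke Lemma~\ref{TwoPointFour}, which evaluates \eqref{star} as an explicit multiple sum of $q$-shifted factorials, free of Hall--Littlewood polynomials. This converts the assertion into an identity between a concrete basic hypergeometric series on the left and a product of theta functions on the right, and it is this reformulation on which all subsequent work is done.

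The core of the argument is to identify the first product on the right-hand side as a specialized character of the affine Kac--Moody algebra $\C_n^{(1)}$. The tool is the Weyl--Kac character formula, equivalently the Macdonald identity for the affine root system of type $\C_n^{(1)}$: the rank-$n$ factors $\prod_{1\le i<j\le n}\theta(q^{j-i},q^{i+j};q^{\kappa})$ together with $\prod_{i=1}^n\theta(q^i;q^{\kappa/2})$ encode the positive roots of $\C_n$, the two distinct root lengths being responsible for the appearance of both $\kappa$ and $\kappa/2$ and for the prefactor $(q^2;q^2)_\infty(q^{\kappa/2};q^{\kappa/2})_\infty/(q)_\infty^2$. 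Summing the multiple series from the first step to this product is the substance of the theorem; concretely I would pass to the column lengths $r_i:=\la'_i$ (so that $r_1\le m$ and $r_i=0$ for $i>m$) and reduce the summation to the Weyl--Kac denominator (Macdonald) identity for $\C_n^{(1)}$, the higher-rank even-modulus counterpart of the mechanism that produces the odd case of Theorem~\ref{Thm_Main}.

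The remaining equality, between the two product expressions on the right, is the level--rank duality flagged in the remarks after Theorem~\ref{Thm_Main}: the first product is organized by the rank $n$ and the second by $m$, with $\kappa = 2m+2n+2$ symmetric under $m\leftrightarrow n$, so that the duality interchanges the non-simply-laced $\C_n^{(1)}$ character with a type-$\A$ product of exactly the shape seen in Theorem~\ref{Thm_Main}. I would prove this purely on the product side, comparing the two theta products via their zeros and quasi-periodicity modulo $q^{\kappa}$ (or by a reflection argument swapping $m$ and $n$); being Hall--Littlewood-free, it serves as an independent check on the normalizing prefactors.

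The hard part will be the second step, precisely because of the two root lengths. In the simply-laced situation of Theorem~\ref{Thm_Main} every theta factor shares the single modulus $\kappa$, whereas here one must reproduce the mixed-modulus factors $\theta(q^i;q^{\kappa/2})$ and the prefactor $(q^2;q^2)_\infty(q^{\kappa/2};q^{\kappa/2})_\infty$ exactly, with the correct powers of $q$ and the correct moduli rather than merely up to an undetermined scalar or power of $q$. Tracking these short-versus-long-root contributions through the summation is the delicate point; the boundary cases $n=1$, which must collapse to the Andrews--Gordon identities \eqref{Eq_AG}, and $m=1$ provide the essential consistency checks that the constants have been pinned down correctly.
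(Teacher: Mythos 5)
There is a genuine gap at the heart of your plan, and it sits exactly where you locate the ``hard part.'' Your strategy is to specialize first --- rewriting the sum side via Lemma~\ref{TwoPointFour} as a multiple $q$-binomial sum --- and then to ``reduce the summation to the Weyl--Kac denominator (Macdonald) identity for $\C_n^{(1)}$.'' No mechanism is offered for this reduction, and none is available: the specialized sum is a positive multiple sum over chains of partitions, while a Macdonald identity is an alternating sum over a rank-$n$ lattice of theta-type terms, and there is no standard manipulation matching one to the other. This is precisely the obstruction the paper flags: substituting \eqref{Eq_spec} directly into a Rogers--Selberg-type series and then producing the product form ``is very difficult,'' which is why the Andrews--Watson route stalls beyond Milne's $\C_2$ case. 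The paper's proof avoids your route entirely; Lemma~\ref{TwoPointFour} plays no role in it. Instead, one starts from the level-$m$ $\C_n$ Rogers--Selberg identity \eqref{Eq_Cn-RS-m} (from the $\C_n$ Andrews transformation of Bartlett--Warnaar) with the $x$-variables \emph{unspecialized}, doubles the rank by substituting $x\mapsto(x_1,y_1,\dots,x_n,y_n)$, and applies the key limit lemma \eqref{Eq_key} iteratively with $y_i\to x_i^{-1}$ to reach \eqref{Eq_CnmLa0}; only then does one specialize. Keeping the $x$-variables free is what makes the final summation tractable: the numerator sum is turned into a determinant by the $\C_n$ Weyl denominator formula, massaged into $\B_n$ form, and evaluated by the $\D_{n+1}^{(2)}$ Macdonald identity --- note, $\D_{n+1}^{(2)}$, not $\C_n^{(1)}$; the $\C_n^{(1)}$ character interpretation is a gloss on this computation (the principally specialized $\C_n^{(1)}$ character formula \eqref{Eq_Kac2} is itself a consequence of the $\D_{n+1}^{(2)}$ Macdonald identity), not an independent engine of proof. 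Without the rank-doubling substitution and the limit lemma \eqref{Eq_key}, your second step has no way to proceed.

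Two smaller corrections. First, your consistency check is miscalibrated: Theorem~\ref{Thm_Main2} has even modulus $\kappa=2m+2n+2$ and never collapses to the Andrews--Gordon identities \eqref{Eq_AG}, whose moduli are odd; the correct classical boundary case is Bressoud's even-modulus family \eqref{Eq_Bressoud}, obtained at $m=1$. Second, your plan for the final equality between the two product sides (level--rank duality, argued on the product side via quasi-periodicity of theta functions) is sound and matches the paper, which treats that step as a straightforward infinite-product manipulation and omits the details.
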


\begin{theorem}[$\D_{n+1}^{(2)}$ RR and AG identities]\label{Thm_Main3}
If $m$ and $n$ are positive integers such that $n\geq 2$, and $\kappa:=2m+2n$, then we have that
\begin{align}\label{Eq_RR-Dn}
\sum_{\substack{\la \\[1pt] \la_1\leq m}}
q^{2\abs{\la}} & P_{2\la}\big(1,q,q^2,\dots;q^{2n-2}\big) \\
&=\frac{(q^{\kappa};q^{\kappa})_{\infty}^n}
{(q^2;q^2)_{\infty}(q)_{\infty}^{n-1}} 
\cdot \prod_{1\leq i<j\leq n} \theta\big(q^{j-i},q^{i+j-1};q^{\kappa}\big) \notag \\
&=\frac{(q^{\kappa};q^{\kappa})_{\infty}^m}{(q)_{\infty}^m} 
\cdot \prod_{i=1}^m  \theta\big(q^i;q^{\kappa}\big)
\prod_{1\leq i<j\leq m} 
\theta\big(q^{j-i},q^{i+j};q^{\kappa}\big). \notag
\end{align}
\end{theorem}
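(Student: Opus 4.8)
The plan is to follow the architecture used for Theorem~\ref{Thm_Main}, since \eqref{Eq_RR-Dn} is an even-modulus, twisted analog of \eqref{Eq_RR-A2n2b}. Comparing with the general series \eqref{star}, the left-hand side is the specialization with weight exponent $\sigma+1=2$ and Hall--Littlewood parameter $q^{2n-2}$, so the first step is to invoke Lemma~\ref{TwoPointFour} to eliminate the Hall--Littlewood polynomials, rewriting $\sum_{\la_1\leq m} q^{2\abs{\la}}P_{2\la}(1,q,q^2,\dots;q^{2n-2})$ as an explicit multisum of $q$-shifted factorials indexed by the column-lengths $r_i=\la'_i$, which vanish for $i>m$ because $\la_1\leq m$. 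The precise choices $\sigma=1$ and $q^{2n-2}$ are what align the quadratic form controlling this multisum with the bilinear form of $\D_{n+1}^{(2)}$; in particular they are responsible for the exceptional factor $(q^2;q^2)_\infty$, which arises from the short-root/long-root asymmetry of the folded diagram and distinguishes \eqref{Eq_RR-Dn} from its $\A_{2n}^{(2)}$ counterpart.

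With the sum in this classical form, the core of the proof is to evaluate it as a product, and the two displayed equalities correspond to the two natural ways of doing so. Summing directly over $r_1\geq\cdots\geq r_m\geq 0$ by a Bailey-chain / Andrews--Gordon argument of the type used for Theorem~\ref{Thm_Main} should produce the bottom expression, whose product $\prod_{1\leq i<j\leq m}\theta(q^{j-i},q^{i+j};q^{\kappa})$ is organized by the rank $m$. Alternatively, reading the same theta sum through the Weyl--Kac denominator formula (the affine Macdonald identity) for $\D_{n+1}^{(2)}$ at the level encoded by $\kappa=2m+2n$ yields the middle expression, organized by $n$; here the restriction $\la_1\leq m$ plays the role of the level truncation, and the hypothesis $n\geq 2$ guarantees both that the product $\prod_{1\leq i<j\leq n}$ is nonempty and that $\D_{n+1}$ does not degenerate (the diagram $\D_2$ being reducible).

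The step I expect to be the main obstacle is reconciling the two product sides, i.e.\ proving directly that the $m$-indexed product equals the $n$-indexed one. This equality is precisely the level-rank duality advertised in remark~(3): it exchanges a rank-$m$ object at level $\sim n$ with a rank-$n$ object at level $\sim m$, and it must be established as an honest identity of theta quotients, matching the two index sets by repeated use of the reflection $\theta(a;q)=\theta(q/a;q)$ and the quasi-periodicity of $\theta(\,\cdot\,;q^{\kappa})$, while checking that the prefactors $(q)_\infty$, $(q^2;q^2)_\infty$ and $(q^{\kappa};q^{\kappa})_\infty$ balance on all three sides. A natural consistency check is the specialization $m=1$, which should collapse the Hall--Littlewood sum to a single Rogers--Ramanujan--type series and both products to classical theta quotients, confirming that the $\D_{n+1}^{(2)}$ bookkeeping --- and in particular the placement of the $(q^2;q^2)_\infty$ factor --- has been carried out correctly.
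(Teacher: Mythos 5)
There is a genuine gap, and it sits exactly where your proposal is vaguest: the passage from the Hall--Littlewood sum to \emph{either} product form. First, a factual correction: for $n\geq 2$ Lemma~\ref{TwoPointFour} does not rewrite the left-hand side as a multisum indexed by the column lengths $r_i=\la_i'$; it produces a sum over \emph{flags} of partitions $0=\mu^{(n)}\subseteq\cdots\subseteq\mu^{(0)}$ with $(\mu^{(0)})'$ even, and only at $n=1$ does this collapse to the classical chain $r_1\geq\cdots\geq r_m$ of the Andrews--Gordon setting. Consequently your plan to evaluate it ``by a Bailey-chain / Andrews--Gordon argument of the type used for Theorem~\ref{Thm_Main}'' has no known implementation, and indeed it misdescribes the paper's method for Theorem~\ref{Thm_Main} as well: the paper explicitly records that the naive Watson--Andrews specialization route breaks down beyond isolated cases --- Milne carried it out for the $\C_n$ Rogers--Selberg identity and obtained a Rogers--Ramanujan-type product only for $\C_2$, which is precisely the $(m,n)=(1,2)$ instance of \eqref{Eq_RR-Dn}. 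The actual mechanism, which your outline never supplies, is: start from the level-$m$ $\C_n$ Rogers--Selberg identity \eqref{Eq_Cn-RS-m} for the \emph{modified} Hall--Littlewood polynomials in unspecialized variables $x$, replace the rank $n$ by $2n-2$, iterate the key limit lemma \eqref{Eq_key} to send $y_i\to x_i^{-1}$ in pairs while leaving $x_{n-1},x_n$ unpaired, and then let $(x_{n-1},x_n)\to(q^{1/2},1)$; this limit converts $\Delta_{\C}$ into $\Delta_{\B}$ (with the factor $\phi_{r_n}$), and it --- not any ``short-root/long-root asymmetry'' bookkeeping --- is the source of the exceptional $(q^2;q^2)_{\infty}$. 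Only after the specialization $q\mapsto q^{2n-2}$, $x_i\mapsto q^{n-i}$ does a theta-type sum $\M$ exist to which denominator formulas can be applied, via the $\B_n$ determinant \eqref{Eq_Bn-VdM}, a determinant manipulation, the $\D_n$ Weyl denominator formula, and finally a variant of the $\D_n^{(1)}$ Macdonald identity extracted from Gustafson's $\A_{2n-1}^{(2)}$ multiple ${}_6\psi_6$ summation. Your appeal to ``the Weyl--Kac denominator formula for $\D_{n+1}^{(2)}$'' is therefore doubly off: there is no theta sum to feed it until the chain of limits above has been performed, and the Macdonald identity actually used is of type $\D_n^{(1)}$; the algebra $\D_{n+1}^{(2)}$ enters only a posteriori, in identifying the right-hand side of \eqref{Eq_dn} with the character $\eup^{-2m\La_0}\ch V(2m\La_0)$.

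A secondary misjudgment: you flag the equality of the $n$-indexed and $m$-indexed products as ``the main obstacle.'' In the paper this is the easy step --- a routine manipulation of infinite products of the kind carried out (and omitted) at the end of the proof of Theorem~\ref{Thm_Main2} --- whereas the genuinely hard content is precisely the sum-to-product evaluation your proposal treats as routine. Your $m=1$ consistency check is sound as far as it goes (it recovers, via \eqref{Eq_Q2r} with $\delta=0$, the $i=1$ case of Bressoud's identity \eqref{Eq_Bressoud}), but it cannot substitute for the missing bridge from the flag multisum to a Macdonald-type theta sum.
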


\smallskip
\begin{remarks}
(1) The $(m,n)=(1,2)$ case of \eqref{Eq_RR-Dn} is equivalent to 
Milne's modulus $6$ Rogers--Ramanujan identity \cite[Theorem 3.26]{Milne94}.

\noindent
(2) If we take $m=1$ in \eqref{Eq_RR-Cn} (with $n\mapsto n-1$)
and \eqref{Eq_RR-Dn}, and apply formula \eqref{Eq_Q2r} below (with $\delta=0$), 
we obtain the $i=1,2$ cases of Bressoud's even modulus identities
\cite{Bressoud80}
\begin{equation}\label{Eq_Bressoud}
\sum_{r_1\geq\dots\geq r_n\geq 0}
\frac{q^{r_1^2+\cdots+r_n^2+r_i+\cdots+r_n}}
{(q)_{r_1-r_2}\cdots(q)_{r_{n-1}-r_n}(q^2;q^2)_{r_n}} 
=\frac{(q^{2n+2};q^{2n+2})_{\infty}}{(q)_{\infty}}\, \cdot 
\theta(q^i;q^{2n+2}).
\end{equation}
\end{remarks}

\medskip

By combining \eqref{Eq_RR-A2n2}--\eqref{Eq_RR-Dn},
we obtain an identity of ``mixed'' type.
\begin{corollary}
If $m$ and $n$ are positive integers and $\kappa:=2m+n+2$, then for
$\sigma=0,1$  we have that
\begin{align}\label{Eq_mixed}
\sum_{\substack{\la \\[1pt] \la_1\leq m}}
q^{(\sigma+1)\abs{\la}} P_{2\la}\big(1,&q,q^2,\dots;q^n\big) \\
&=\frac{(q^{\kappa};q^{\kappa})_{\infty}^m}{(q)_{\infty}^m} 
\cdot \prod_{i=1}^m  \theta\big(q^{i-\sigma+1};q^{\kappa}\big)
\prod_{1\leq i<j\leq m} 
\theta\big(q^{j-i},q^{i+j-\sigma+1};q^{\kappa}\big).
\end{align}
\end{corollary}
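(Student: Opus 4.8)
The plan is to prove \eqref{Eq_mixed} by a case analysis on the parity of $n$, matching the corollary in each case against one of Theorems~\ref{Thm_Main}--\ref{Thm_Main3}. The key observation is that the left-hand side of \eqref{Eq_mixed} is literally the sum side appearing in those theorems, except that the Hall--Littlewood modulus $q^n$ is here left unconstrained, whereas each theorem fixes its parity ($q^{2N-1}$ for $\A_{2N}^{(2)}$, $q^{2N}$ for $\C_N^{(1)}$, and $q^{2N-2}$ for $\D_{N+1}^{(2)}$). Crucially, the \emph{final} (level-rank dual) expression in each of \eqref{Eq_RR-A2n2a}, \eqref{Eq_RR-A2n2b}, \eqref{Eq_RR-Cn} and \eqref{Eq_RR-Dn} has theta-products indexed only by $m$ and by the modulus $\kappa$; this is exactly the shape of the right-hand side of \eqref{Eq_mixed}, so once the moduli are aligned the identity should follow by inspection.

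First I would treat $n$ odd. Writing $n=2N-1$ with $N:=(n+1)/2\geq 1$, Theorem~\ref{Thm_Main} applies with its internal rank equal to $N$: the Hall--Littlewood modulus is $q^{2N-1}=q^n$ and its modulus is $2m+2N+1=2m+n+2=\kappa$. The two statements \eqref{Eq_RR-A2n2a} and \eqref{Eq_RR-A2n2b} supply the cases $\sigma=0$ and $\sigma=1$ respectively, and their dual right-hand sides are precisely the $\sigma=0$ and $\sigma=1$ specializations of the right-hand side of \eqref{Eq_mixed}: the arguments $q^{i+1}$ and $q^{i+j+1}$ for $\sigma=0$, and $q^i$ and $q^{i+j}$ for $\sigma=1$, are exactly the two values of $q^{i-\sigma+1}$ and $q^{i+j-\sigma+1}$.

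Next I would treat $n$ even, splitting on $\sigma$. For $\sigma=0$, set $n=2N$ with $N:=n/2\geq 1$ and invoke Theorem~\ref{Thm_Main2}: its Hall--Littlewood modulus is $q^{2N}=q^n$ and its modulus is $2m+2N+2=2m+n+2=\kappa$, while the last line of \eqref{Eq_RR-Cn} is exactly the $\sigma=0$ right-hand side of \eqref{Eq_mixed}. For $\sigma=1$, set $n=2N-2$ with $N:=n/2+1$ and invoke Theorem~\ref{Thm_Main3} (the algebra being $\D_{N+1}^{(2)}$): the Hall--Littlewood modulus is $q^{2N-2}=q^n$ and its modulus is $2m+2N=2m+n+2=\kappa$, and the last line of \eqref{Eq_RR-Dn} is the $\sigma=1$ right-hand side of \eqref{Eq_mixed}. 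Here one must check the hypothesis $N\geq 2$ of Theorem~\ref{Thm_Main3}, which holds because $n\geq 2$ for every even positive $n$.

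There is no genuine analytic obstacle: the entire argument is bookkeeping, and the only step requiring care is confirming that the dual right-hand sides of the three theorems depend on the internal rank $N$ only through the modulus $\kappa$, so that after the substitutions they collapse to the single uniform product in \eqref{Eq_mixed}. I would also verify that the mild boundary conditions ($N\geq 1$ in the odd and $\C_N^{(1)}$ cases, $N\geq 2$ in the $\D_{N+1}^{(2)}$ case) are all implied by $m,n\geq 1$, which they are.
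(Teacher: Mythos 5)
Your proposal is correct and is precisely the argument the paper intends: the corollary is stated with the remark that it follows ``by combining \eqref{Eq_RR-A2n2}--\eqref{Eq_RR-Dn}'', which is exactly your parity-of-$n$ (and $\sigma$) case analysis, with \eqref{Eq_RR-A2n2a}/\eqref{Eq_RR-A2n2b} covering odd $n$, and the dual sides of \eqref{Eq_RR-Cn} and \eqref{Eq_RR-Dn} covering even $n$ with $\sigma=0$ and $\sigma=1$ respectively. Your bookkeeping of the moduli ($\kappa=2m+n+2$ in all four cases) and of the boundary condition $N\geq 2$ for the $\D_{N+1}^{(2)}$ case is accurate, so nothing is missing.
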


Identities for $\A_{n-1}^{(1)}$ also exist, 
although their formulation is perhaps slightly less satisfactory.
We have the following ``limiting'' Rogers--Ramanujan type identities.

\begin{theorem}[$\A_{n-1}^{(1)}$ RR and AG identities]\label{Thm_Main4}
If $m$ and $n$ are positive integers and $\kappa:=m+n$, then we have that
\begin{align*}
\lim_{r\to\infty}
q^{-m\binom{r}{2}} 
P_{(m^r)}(1,q,q^2,\dots;q^n)
&=\frac{(q^{\kappa};q^{\kappa})_{\infty}^{n-1}}{(q)_{\infty}^n}
\cdot \prod_{1\leq i<j\leq n} \theta(q^{j-i};q^{\kappa}) \\
&=\frac{(q^{\kappa};q^{\kappa})_{\infty}^{m-1}}{(q)_{\infty}^m} 
\cdot \prod_{1\leq i<j\leq m} \theta(q^{j-i};q^{\kappa}).
\end{align*}
\end{theorem}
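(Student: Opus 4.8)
The plan is to prove Theorem~\ref{Thm_Main4} directly from the principal specialization of a single Hall--Littlewood polynomial. Unlike the sum-sides of Theorems~\ref{Thm_Main}--\ref{Thm_Main3}, the left-hand side here involves only the rectangular partition $(m^r)$ in a limit, and is \emph{not} a sum of $P_{2\lambda}$ over a box; there is no apparent way to extract it as a specialization or limit of the earlier theorems, so I would treat it independently. First I would invoke the principal specialization formula for Hall--Littlewood polynomials \cite[Ch.~III]{Macdonald95}, which evaluates $P_{(m^r)}(1,q,q^2,\dots;q^n)$ as an explicit finite product of $q$-shifted factorials. The key structural fact is that the lowest power of $q$ occurring is $q^{n((m^r))}=q^{m\binom{r}{2}}$, with coefficient $1$; thus the normalization $q^{-m\binom{r}{2}}$ appearing in the statement is exactly $q^{-n((m^r))}$, cancels this dominant power, and leaves a power series with constant term $1$.

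Second, I would take the limit $r\to\infty$. Because enlarging the rectangle from $(m^r)$ to $(m^{r+1})$ alters the normalized specialization only in high order, the coefficients stabilize and the limit exists as a formal power series in $q$; concretely, each $r$-dependent $q$-shifted factorial in the specialization converges to its infinite counterpart $(q^{a};q^{b})_{r}\to(q^{a};q^{b})_{\infty}$. This produces an explicit infinite product $\Pi_{m,n}(q)$ of $q$-shifted factorials.

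Third---and this is where the real content lies---I would identify $\Pi_{m,n}(q)$ with the theta-function product on the right-hand side. The natural mechanism is to regroup the infinite products according to residues modulo $\kappa=m+n$ and to recombine them into the modified theta functions $\theta(q^{j-i};q^{\kappa})=(q^{j-i};q^{\kappa})_{\infty}(q^{\kappa-(j-i)};q^{\kappa})_{\infty}$; writing $\prod_{1\le i<j\le n}\theta(q^{j-i};q^{\kappa})=\prod_{d=1}^{n-1}\theta(q^{d};q^{\kappa})^{\,n-d}$ exposes the affine Weyl-denominator structure of $\A_{n-1}^{(1)}$, and the conversion is precisely a principally specialized Macdonald (affine Weyl denominator) identity for $\A_{n-1}^{(1)}$. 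I expect this to be the main obstacle: matching the raw specialization product to the specific affine theta product, that is, carrying out the Jacobi-triple-product bookkeeping so that the factors $(q^{\kappa};q^{\kappa})_{\infty}^{n-1}/(q)_{\infty}^{n}$ emerge with the correct multiplicities. Conceptually, this is also the step that explains why the product side is a specialized affine character.

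Finally, the equality of the two right-hand sides---the \emph{level--rank duality} exchanging $m$ and $n$ (note that $\kappa=m+n$ is symmetric)---I would establish as a self-contained identity among theta functions. The cleanest route is to show that both products collapse to the \emph{same} eta-quotient, an infinite product over residue classes modulo $\kappa$; the cases $m=1$ (all $n$) and $n=1$ (all $m$), where both sides reduce to $1/(q)_{\infty}$, serve as consistency checks and as the base of an induction on $\min(m,n)$. Since the left-hand side is a single function of $(m,n,q)$, once it has been matched to the $n$-indexed product the $m$-indexed product follows from this duality, completing the proof.
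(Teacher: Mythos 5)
Your plan fails at its very first step, and the failure is fatal to the whole architecture. There is no closed product formula for $P_{(m^r)}(1,q,q^2,\dots;q^n)$ as a finite product of $q$-shifted factorials when $n\geq 2$: the specialization formula on p.~213 of \cite{Macdonald95} that you appear to have in mind applies only when the Hall--Littlewood parameter coincides with the base of the geometric progression (the case $t=q$, i.e., $n=1$ here). For $n\geq 2$ the finite-$r$ specialization is genuinely a multiple $q$-hypergeometric sum, not a product --- see \eqref{Eq_Q2r}, where even the $m=2$ case is a fold-$n$ sum, together with the authors' remark that they do not know how to generalize \eqref{Eq_Q2r} to arbitrary rectangular shapes, and Lemma~\ref{TwoPointFour}, which gives only a multi-sum for such specializations. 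If your step one were available, the theorem would essentially be a triviality and the limit in its statement would be cosmetic; in reality the product structure only \emph{emerges in the limit} $r\to\infty$, and your subsequent steps (termwise convergence of $r$-dependent factorials, regrouping an infinite product by residues mod $\kappa$) all presuppose the nonexistent product. Your one correct structural observation --- that the lowest power is $q^{n((m^r))}=q^{m\binom{r}{2}}$ with coefficient $1$, explaining the normalization --- does not repair this.

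The paper's actual route inverts your order of operations, and this inversion is the content of the proof. It proves the more general Theorem~\ref{Thm_Main5} for near-rectangles $(m^r,k)$: starting from an explicit multi-sum formula for the modified Hall--Littlewood polynomial $Q'_{(m^r,k)}(x;q)$ (a special case of \cite[Corollary 3.2]{BW13}), one replaces $r\mapsto nr$, shifts the summation variables by $r$, and then takes $r\to\infty$ \emph{of the sum}, obtaining exactly the Weyl--Kac character formula for $\eup^{-\La}\ch V(\La)$ with $\La=(m-k)\La_0+k\La_1$ for $\A_{n-1}^{(1)}$; only then does the principal specialization $q\mapsto q^n$, $x_i\mapsto q^{n-i}$ together with Kac's specialization formula \eqref{Eq_Kac} (equivalently, an $\A_{n-1}^{(1)}$ Macdonald identity) convert the limiting character into the infinite product. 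So your instinct that a principally specialized affine denominator/Macdonald identity is the engine is correct, but it must be applied to the character \emph{after} the limit, not used to reorganize a (nonexistent) finite-$r$ product. Your final step --- the level--rank duality of the two right-hand sides as a routine identity of infinite products over residue classes mod $\kappa=m+n$ --- is fine and is indeed elementary bookkeeping, but it is the only part of the proposal that survives.
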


Now we turn to the question of whether the new $q$-series
appearing in these theorems, which arise from the 
Hall--Littlewood polynomials, enjoy the same algebraic properties as
\eqref{G}, \eqref{H}, and the Rogers--Ramanujan continued fraction. 
As it turns out they do: their singular values are algebraic numbers.
Moreover, we characterize those ratios which simplify 
to algebraic integral units.

To make this precise,
we recall that $q=\eup^{2\pi\iup\tau}$ for $\im(\tau)>0$, and that
$m$ and $n$ are arbitrary positive integers.
The auxiliary parameter $\kappa=\kappa_*(m,n)$ in 
Theorems~\ref{Thm_Main}, \ref{Thm_Main2} and \ref{Thm_Main3}  
is defined as follows:
\begin{equation}\label{Eq_kappa}
\kappa=
\begin{cases}
\kappa_1(m,n):=2m+2n+1 & \text{for $\A_{2n}^{(2)}$} \\
\kappa_2(m,n):=2m+2n+2 & \text{for $\C_n^{(1)}$} \\
\kappa_3(m,n):=2m+2n   & \text{for $\D_{n+1}^{(2)}$}.
\end{cases}
\end{equation}

\smallskip
\begin{remark} The parameter $\kappa$ has a representation theoretic 
interpretation arising from the corresponding affine Lie 
algebra $X_N^{(r)}$ (see Section~\ref{Sec_Pf}).
It turns out that
\[
\kappa_*(m,n)=\frac{2}{r}\big(\lev(\Lambda)+h^{\vee}\big),
\]
where $\lev(\Lambda)$ is the level of the corresponding representation, 
$h^{\vee}$ is the dual Coxeter number and $r$ is the tier number.
\end{remark}

To obtain algebraic values, we require  certain normalizations of these series.
The subscripts below correspond to the labelling in the theorems.
In particular,  $\Phi_{1a}$ and $\Phi_{1b}$ appear 
in Theorem~\ref{Thm_Main}, $\Phi_2$ is in Theorem~\ref{Thm_Main2},
and $\Phi_3$ is in Theorem~\ref{Thm_Main3}. 
Using this notation, the series are
\begin{subequations}\label{Phi123}
\begin{align}\label{Phi1a}
\Phi_{1a}(m,n;\tau)&:=q^{\tfrac{mn(4mn-4m+2n-3)}{12\kappa}}
\sum_{\la:\:\la_1\leq m}q^{\abs{\la}} P_{2\la}(1,q,q^2,\dots;q^{2n-1})\\
\label{Phi1b}
\Phi_{1b}(m,n;\tau)&:=q^{\tfrac{mn(4mn+2m+2n+3)}{12\kappa}}
\sum_{\la:\:\la_1\leq m}q^{2\abs{\la}} P_{2\la}(1,q,q^2,\dots;q^{2n-1})\\
\label{Phi2}
\Phi_2(m,n;\tau)&:=q^{\tfrac{m(2n+1)(2mn-m+n-1)}{12\kappa}}
\sum_{\la:\:\la_1\leq m}q^{\abs{\la}} P_{2\la}(1,q,q^2,\dots;q^{2n})\\
\label{Phi3}
\Phi_3(m,n;\tau)&:=q^{\tfrac{m(2n-1)(2mn+n+1)}{12\kappa}}
\sum_{\la:\:\la_1\leq m}q^{2\abs{\la}} P_{2\la}(1,q,q^2,\dots;q^{2n-2}).
\end{align}
\end{subequations}

\smallskip
\begin{remarks}
(1) We note that $\Phi_3(m,n;\tau)$ is not well defined when $n=1$.

\noindent
(2) We note that the $\kappa_*(m,n)$ are odd in the $\A_{2n}^{(2)}$ cases,
and are even for the $\C_n^{(1)}$ and $\D_{n+1}^{(2)}$ cases.
This dichotomy will be important when seeking pairs of $\Phi_*$ whose 
singular values have ratios that are algebraic integral units.
\end{remarks}
\medskip

Our first result concerns the algebraicity of these values and their
Galois theoretic properties. 
We show that these values are in specific abelian extensions
of imaginary quadratic fields (see \cite{Borel, Cox} for background 
on the explicit class field theory of imaginary quadratic fields).
For convenience, if $-D<0$ is a discriminant, then we define
\[
D_0:=\begin{cases} \frac{D}{4} \ \ \ \ \ &\text{if $D\equiv 0\pmod 4$},\\
\frac{-D-1}{4} &\text{if $-D\equiv 1\pmod{4}$}.
\end{cases}
\]

\begin{theorem}\label{thm} 
Assume the notation above, and let $\kappa:=\kappa_*(m,n)$.
If $\kappa \tau$ is a CM point with discriminant $-D<0$, then
the following are true:

\begin{enumerate}
\item 
The singular value $\Phi_*(m,n;\tau)$ is an algebraic number.

\item The multiset
\[
\Big\{\Phi_*(m,n,\tau_Q/\kappa)_{(\gamma\cdot \delta_Q(\tau))}^{12\kappa}: 
\: (\gamma,Q) \in W_{\kappa,\tau}\times \mathcal{Q}_D\Big\}
\]
\textup{(}see Section~\ref{SiegelFunctions} for definitions\textup{)} 
consists of multiple copies of a Galois orbit over $\Rat$.
\item  If $\kappa>10$, $\abs{-D}>\kappa^4/2$, and $\gcd(D_0,\kappa)=1$, then
the multiset in (2) is a Galois orbit over $\Rat$. 
\end{enumerate}
\end{theorem}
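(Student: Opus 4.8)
The plan is to identify each normalized series $\Phi_*(m,n;\tau)$ with the value at the CM point $\kappa\tau$ of an explicit weight-zero modular function built from Dedekind eta quotients and Siegel functions, and then to read off its arithmetic from the explicit class field theory of $\Rat(\tau)$ together with Shimura's reciprocity law. The three parts correspond to three standard inputs: CM values of modular functions with algebraic Fourier coefficients are algebraic; their Galois conjugates are governed by the action of the class group on CM points and of the level structure on $q$-expansions; and under suitable size hypotheses this description becomes sharp.

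First I would rewrite the product sides of Theorems~\ref{Thm_Main}, \ref{Thm_Main2} and~\ref{Thm_Main3}. Writing $p=q^\kappa=\eup^{2\pi\iup\kappa\tau}$, each factor $\theta(q^a;q^\kappa)$ equals, up to a root of unity and an elementary power of $p$, the Siegel function $g_{(a/\kappa,0)}(\kappa\tau)$, while the prefactor built from powers of $(q^\kappa;q^\kappa)_\infty$ and $(q)_\infty$ is an eta quotient in $\eta(\kappa\tau)$ and $\eta(\tau)$ up to a power of $q$. The exponents in the normalizations~\eqref{Phi123} are engineered so that these stray $q$-powers cancel exactly, leaving $\Phi_*(m,n;\tau)=\zeta\cdot G(\kappa\tau)$ for a root of unity $\zeta$ and a weight-zero modular function $G$. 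In the Kubert--Lang normalization a product of Siegel functions $g_a$ with $\kappa a\in\Z^2$, raised to the $12\kappa$-th power, becomes a genuine element of the function field of the modular curve $X(\kappa)$, free of root-of-unity ambiguity and with cyclotomic $q$-expansion coefficients; the accompanying eta quotient is tamed in the same way. This is the source of the exponent $12\kappa$ in parts (2) and (3). Part (1) is then immediate: a modular function with algebraic Fourier coefficients takes algebraic values at CM points, so $\Phi_*(m,n;\tau)$ and its $12\kappa$-th power are algebraic; equivalently, singular values of Siegel functions lie in ray class fields of $\Rat(\tau)$.

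For part (2) I would apply Shimura reciprocity to $G^{12\kappa}(\kappa\tau)$, whose Galois conjugates over $\Rat$ arise from two commuting actions. The class group, represented by the forms $Q\in\mathcal{Q}_D$, acts on the CM point through the Artin map, realized concretely by evaluating at $\tau_Q/\kappa$ so that the CM argument becomes $\tau_Q$. The level structure---the Galois action on the cyclotomic Fourier coefficients and the attendant choice of primitive $\kappa$-th root of unity---is realized by the finite set $W_{\kappa,\tau}$ acting on $G^{12\kappa}$ through the transforms $G^{12\kappa}_{(\gamma\cdot\delta_Q(\tau))}$. Shimura's reciprocity law identifies the full Galois orbit of $G^{12\kappa}(\kappa\tau)$ with the image of $W_{\kappa,\tau}\times\mathcal{Q}_D$ under this combined action. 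Since the parametrizing map $(\gamma,Q)\mapsto G^{12\kappa}_{(\gamma\cdot\delta_Q(\tau))}(\tau_Q)$ need not be injective, the indexed multiset is a fixed number of copies of that single orbit, which is precisely assertion (2).

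The content of part (3) is that under the stated hypotheses this parametrization is injective, so the multiplicity drops to one. The coprimality condition $\gcd(D_0,\kappa)=1$ ensures that $\kappa$ is prime to the conductor of the relevant order, so the ray class field of conductor $\kappa$ attains its expected degree over the ring class field and $W_{\kappa,\tau}$ injects into the corresponding part of the Galois group; the hypothesis $\kappa>10$ discards the small moduli where Siegel functions or $\Gamma(\kappa)$ behave exceptionally. The genuinely analytic input, and the main obstacle, is the inequality $\abs{-D}>\kappa^4/2$, which must be used to exclude accidental coincidences among the conjugates attached to distinct data $(\gamma,Q)$. I would establish this by a separation estimate: bounding below the difference of the leading terms of the $q$-expansions of the competing functions $G^{12\kappa}_{(\gamma\cdot\delta_Q(\tau))}$ at the points $\tau_Q$, and noting that once $\im(\tau_Q/\kappa)$ is large relative to $\kappa$---which $\abs{-D}>\kappa^4/2$ guarantees for the reduced forms---the dominant term already separates the values and forces $(\gamma,Q)$ to agree. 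Carrying out this estimate uniformly over the class group, rather than the underlying reciprocity bookkeeping, is where the real work lies.
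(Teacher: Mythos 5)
Your proposal is correct and follows essentially the same route as the paper: Lemmas~\ref{Phis.2} and~\ref{Phis3} carry out exactly the factorization you describe, writing each $\Phi_*(m,n;\tau)$ as a pure product of Siegel functions $g_{j/\kappa,0}(\kappa\tau)$ with the normalizing $q$-powers of \eqref{Phi123} cancelling, after which Theorems~\ref{KubertLangThm}, \ref{F_N} and~\ref{GaloisThm} (Kubert--Lang modularity of $g_a^{12\kappa}$ plus the Shimura-reciprocity description of the Galois action via $W_{\kappa,\tau}\times\mathcal{Q}_D$) yield parts (1) and (2). For part (3) the paper likewise gives only a sketch, and it matches yours in substance: it uses $\kappa>10$ and $\gcd(D_0,\kappa)=1$ to rule out permutation of the Siegel factors and non-faithful action, and uses $\abs{-D}>\kappa^4/2$ in an asymptotic separation argument at the maximal-imaginary-part CM point $\tau_*$ (exhibiting one unrepeated value, which already forces multiplicity one), with the uniform estimates coming from the Bernoulli-polynomial leading exponents of the Siegel functions.
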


\smallskip
\begin{remarks}
(1) For each pair of positive integers $m$ and $n$, the inequality in
Theorem~\ref{thm} (3) holds for all but finitely many discriminants.

\noindent
(2) In Section~\ref{SiegelFunctions} we will show that the values $\Phi_*(m,n;\tau)^{12\kappa}$ are
 in a distinguished class field over the ring class field $\Rat(j(\kappa^2\tau))$,
 where $j(\tau)$
is the usual Klein $j$-function. 

\noindent
(3) The $\Phi_*$ singular values do not in general contain full sets of 
Galois conjugates. In particular, the singular values in the multiset in
Theorem~\ref{thm} (2) generally require $q$-series which are not among the 
four families $\Phi_*$.
For instance, only the $i=1$ and $i=m+1$ cases of the Andrews--Gordon 
identities arise from specializations of $\Phi_{1a}$ and $\Phi_{1b}$ 
respectively. 
However, the values associated to the other AG identities arise as Galois 
conjugates of these specializations. One then naturally wonders whether 
there are even further families of identities, perhaps those which
can be uncovered by the theory of complex multiplication.

\noindent
(4) Although Theorem~\ref{thm} (3) indicates that the multiset in (2) 
is generically a single orbit of Galois conjugates, it turns out that there
are indeed situations where the set is more than a single copy of such 
an orbit. Indeed, the two examples in
Section~\ref{Sec_Examples} will be such accidents.
\end{remarks}

\bigskip
We now address the question of singular values and algebraic integral units. 
Although the singular values of $q^{-1/60}G(q)$ and $q^{11/60}H(q)$ are not 
generally algebraic integers, their denominators can be determined exactly,
and their ratios always are algebraic integral units. 
The series $\Phi_*$ exhibit similar behavior.
The following theorem determines the integrality properties of the singular 
values. Moreover, it gives algebraic integral unit ratios in the case of 
the $\A_{2n}^{(2)}$ identities, generalizing the case of the Rogers--Ramanujan
continued fraction.

\begin{theorem}\label{thm2}
Assume the notation and hypotheses in Theorem~\ref{thm}.
Then the following are true:
\begin{enumerate}
\item The singular value $1/\Phi_*(m,n;\tau)$ is an  algebraic integer.
\item The singular value $\Phi_*(m,n;\tau)$ is a unit over $\Z[1/\kappa]$.
\item The ratio $\Phi_{1a}(m,n;\tau)/\Phi_{1b}(m,n;\tau)$ is an algebraic integral unit.
\end{enumerate}
\end{theorem}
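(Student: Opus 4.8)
The plan is to express each normalized series $\Phi_*(m,n;\tau)$ as a product of Siegel functions and a Dedekind eta quotient evaluated at the CM point $\kappa\tau$, and then to read off integrality from the known arithmetic of these modular units. Recall that the Siegel function attached to $a=(a_1,a_2)\in\Rat^2\setminus\Z^2$ has the product expansion
\[
g_a(\tau)=-q^{B_2(a_1)/2}\,\eup^{\pi\iup a_2(a_1-1)}
(1-q_z)\prod_{k\geq 1}(1-q^k q_z)(1-q^k/q_z),
\]
where $q_z=\eup^{2\pi\iup(a_1\tau+a_2)}$ and $B_2(x)=x^2-x+\tfrac16$. Taking $a=(a/\kappa,0)$ and replacing $\tau$ by $\kappa\tau$ gives the identity $g_{(a/\kappa,0)}(\kappa\tau)=-q^{\kappa B_2(a/\kappa)/2}\,\theta(q^a;q^{\kappa})$, so that every factor $\theta(q^a;q^{\kappa})$ on the product sides of Theorems~\ref{Thm_Main}, \ref{Thm_Main2} and~\ref{Thm_Main3} is a Siegel function up to an explicit power of $q$. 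Writing the remaining factors $(q^{\kappa};q^{\kappa})_{\infty}/(q)_{\infty}$ in terms of $\eta(\kappa\tau)/\eta(\tau)$, one checks that the prefactors $q^{\,\cdots/12\kappa}$ in \eqref{Phi123} are precisely the sums $\tfrac{\kappa}{2}\sum_i B_2(a_i/\kappa)$ (together with the eta contribution) needed to cancel every fractional power of $q$. Hence, up to a root of unity,
\[
\Phi_*(m,n;\tau)=\Big(\tfrac{\eta(\kappa\tau)}{\eta(\tau)}\Big)^{c_*}
\prod_i g_{(a_i/\kappa,0)}(\kappa\tau)
\]
with $c_*\geq 0$ and exponents read off directly from the product sides.

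With this in hand, parts (1) and (2) follow from the standard integrality theory of Siegel functions and eta quotients (Kubert--Lang). Since $\kappa\tau$ is a CM point, the main theorem of complex multiplication — already invoked in the proof of Theorem~\ref{thm} — places each value $g_{(a_i/\kappa,0)}(\kappa\tau)$ and $\eta(\kappa\tau)/\eta(\tau)$ in a ray class field of the imaginary quadratic field $\Rat(\tau)$. Each such value has the property that its reciprocal is an algebraic integer, and that it is a unit at every prime not dividing $\kappa$. Taking the product over $i$ (using $c_*\geq 0$) shows that $1/\Phi_*(m,n;\tau)$ is an algebraic integer, which is part (1), and that $\Phi_*(m,n;\tau)$ is a unit over $\Z[1/\kappa]$, which is part (2). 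The only possible denominators of $\Phi_*$ are thus supported on primes above $\kappa$.

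For part (3) I would form the quotient $\Phi_{1a}/\Phi_{1b}$. Both series arise from $\A_{2n}^{(2)}$ at the same (odd) modulus $\kappa=2m+2n+1$ and carry the same eta power $c_{1a}=c_{1b}=n$, so the eta quotients cancel and the ratio is a pure product $\prod_i g_{(a_i/\kappa,0)}(\kappa\tau)^{\pm1}$ of Siegel functions. The residues entering $\Phi_{1a}$ are $\{\,i+m:1\leq i\leq n\,\}$ together with $\{\,j-i,\,i+j-1:1\leq i<j\leq n\,\}$ from \eqref{Eq_RR-A2n2a}, while those entering $\Phi_{1b}$ are $\{\,i:1\leq i\leq n\,\}$ together with $\{\,j-i,\,i+j:1\leq i<j\leq n\,\}$ from \eqref{Eq_RR-A2n2b}. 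A product of Siegel functions is a genuine modular unit — integral together with its inverse at every prime, including those above $\kappa$ — exactly when its exponents satisfy the second-order Kubert--Lang distribution relations, whose obstruction is measured by $\sum_i e_i B_2(a_i/\kappa)$ and by a companion linear relation controlling the leading root of unity.

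The main obstacle is the last point: the combinatorial verification that the Bernoulli sums $\sum B_2(a_i/\kappa)$ for the numerator and denominator residue multisets coincide, so that their difference vanishes for the ratio and the Siegel product is balanced at the primes above $\kappa$. This is where the precise form of the $\A_{2n}^{(2)}$ products is essential, and it explains why the unit statement is special to this family: among the four series, only $\A_{2n}^{(2)}$ furnishes two distinct normalized $q$-series $\Phi_{1a}$ and $\Phi_{1b}$ at one and the same odd modulus, which is what makes both the eta factors and the Bernoulli obstruction cancel — consistent with the parity dichotomy noted after \eqref{Phi123}. Granting the cancellation, $\Phi_{1a}/\Phi_{1b}$ is a modular unit whose CM value is an algebraic integral unit, establishing part (3).
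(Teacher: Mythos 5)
Your overall route---factor everything into Siegel functions at $\kappa\tau$ and quote Kubert--Lang---is the paper's, and your part (2) is sound as far as it goes: each $g_{(j/\kappa,0)}(\kappa\tau)$ has denominator dividing $\kappa$, so Theorem~\ref{ga-Algebraic}~(2),(3) together with the integrality of $j(\kappa\tau)$ make every factor a unit over $\Z[1/\kappa]$, and unithood is insensitive to the signs of your exponents. Part (1), however, rests on a false assertion: you claim that for each factor ``its reciprocal is an algebraic integer.'' Kubert--Lang gives the opposite orientation for Siegel values: $g_a(\theta)$ \emph{itself} is an algebraic integer (Theorem~\ref{ga-Algebraic}~(1)), and its reciprocal is generically not---if it were, every such value would be a unit, contradicting the genuine powers of $5$ in the denominator of the singular value $q^{-1/60}G(q)=\Phi_{1a}(1,1;\iup)$, whose minimal polynomial $625x^{16}-250x^{12}-1025x^8-90x^4+1$ is recorded in the paper's introduction. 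In your factorization $\Phi_*=(\eta(\kappa\tau)/\eta(\tau))^{c_*}\prod_i g_{(a_i/\kappa,0)}(\kappa\tau)$ the two kinds of factors are integral in opposite directions: for odd $\kappa$ one has, up to sign, $\eta(\tau)/\eta(\kappa\tau)=\prod_{j=1}^{(\kappa-1)/2} g_{(j/\kappa,0)}(\kappa\tau)$, so the reciprocal of the eta power is integral, while the $g_{(a_i/\kappa,0)}(\kappa\tau)$ sit in the numerator. Hence nothing follows about $1/\Phi_*$ until you prove that, after expanding the eta power into Siegel functions, every $g_{(j/\kappa,0)}(\kappa\tau)$ occurs with \emph{nonpositive} total exponent. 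That inequality is the real content of part (1) and is exactly what the paper's Lemma~\ref{Phis.2} and Lemma~\ref{Phis3} establish, e.g.
\[
\Phi_{1a}(m,n;\tau)=q^{\frac{mn(4mn-4m+2n-3)}{12\kappa}}
\prod_{j=1}^m\theta(q^j;q^{\kappa})^{-1}
\prod_{j=1}^{m+n}\theta(q^j;q^{\kappa})^{-\min(m,n-1,\lceil j/2\rceil-1)},
\]
with all exponents $\leq 0$ because the theta factors on the product sides only partially cancel the $n$ (resp.\ $m$) copies of $\prod_j\theta(q^j;q^{\kappa})^{-1}$ coming from $(q^{\kappa};q^{\kappa})_{\infty}^n/(q)_{\infty}^n$. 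Your bookkeeping verifies only that the fractional powers of $q$ cancel, which is a modularity statement; it neither implies nor substitutes for this exponent inequality.

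Part (3) is left with an acknowledged hole (``granting the cancellation''), and the criterion you propose to close it is not the one that works. Matching Bernoulli sums $\sum_i e_i\mathbf{B}_2(a_i/\kappa)$ between numerator and denominator is neither verified by you nor sufficient: a balanced product of Siegel values need not be a unit at the primes above $\kappa$. What the proof actually requires is the multiset identity---obtained from your two residue lists by reducing exponents modulo $\kappa$ via $\theta(q^{\kappa-b};q^{\kappa})=\theta(q^b;q^{\kappa})$---showing that the ratio telescopes to
\[
\frac{\Phi_{1a}(m,n;\tau)}{\Phi_{1b}(m,n;\tau)}
=\prod_{j=1}^m\frac{g_{(2j/\kappa,0)}(\kappa\tau)}{g_{(j/\kappa,0)}(\kappa\tau)},
\]
a product of pairs of the exact shape $g_{2a}/g_a$. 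Only then does Theorem~\ref{ga-Algebraic}~(4) apply, with $c=2$ coprime to $\Den(a)\mid\kappa$ precisely because $\kappa=2m+2n+1$ is odd; it is this multiplicative pairing $a\mapsto 2a$, not the parity heuristic you offer, that makes the $\A_{2n}^{(2)}$ family special. To complete your argument you would need to prove this telescoping identity (it is the last display of the paper's Lemma~\ref{Phis.2}) and then invoke the Kubert--Lang $g_{ca}/g_a$ unit theorem in place of the distribution-relation appeal.
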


\smallskip
\begin{remarks}
(1) We have that $\Phi_{1a}(1,1;\tau)=q^{-1/60}G(q)$ and 
$\Phi_{1b}(1,1;\tau)=q^{11/60}H(q)$.
Therefore, Theorem~\ref{thm2}~(3) implies the theorem of 
Berndt, Chan, and Zhang that the ratios of these singular values---the 
singular values of the  Rogers--Ramanujan continued fraction---are 
algebraic integral units.

\noindent
(2) It is natural to ask whether Theorem~\ref{thm2} (3) is a special property 
enjoyed only by the $\A_{2n}^{(2)}$ identities.
More precisely, are ratios of singular values of further pairs of 
$\Phi_*$ series algebraic integral units?  
By Theorem~\ref{thm2} (2),
it is natural to restrict attention to cases where 
the $\kappa_*(m,n)$ integers agree.
Indeed, in these cases the singular values are already
integral over the common ring $\Z[1/\kappa]$.
Due to the parity of the $\kappa_*(m,n)$, the only other cases to consider are pairs involving
$\Phi_2$ and $\Phi_3$.
In Section~\ref{Sec_Examples} we give an example illustrating 
that such ratios for
$\Phi_2$ and $\Phi_3$ are not generically algebraic integral units.
\end{remarks}

\medskip

\begin{example}
In Section~\ref{Sec_Examples} we shall consider the $q$-series
$\Phi_{1a}(2,2;\tau)$ and $\Phi_{1b}(2,2;\tau)$. For $\tau=\iup/3$, the first 100 coefficients of the $q$-series
respectively  give the numerical approximations
\begin{align*}
\Phi_{1a}(2,2;\iup/3)&=0.577350\dots\stackrel{?}{=}\frac{1}{\sqrt{3}}\\
\Phi_{1b}(2,2;\iup/3)&=0.125340\dots%= (\sqrt{3} - 2)^(1/3)/(\sqrt{3} (1 + i^{5/9}) (1 + i^{7/9}))
%0.217095\dots 
\end{align*}
Here we have that $\kappa_{1}(2,2)=9$.
Indeed, these values are not algebraic integers. 
Respectively, they are roots of
\begin{gather*}
3x^2-1\\
19683x^{18}-80919x^{12}+39366x^9+11016x^6+486x^3-1.
%19683x^{18}-80919x^{12}-39366x^9+11016x^6-486x^3-1.
\end{gather*}
However, Theorem~\ref{thm2}~(2) applies, and we find that 
$\sqrt{3}\Phi_{1a}(2,2;\iup/3)$ and $\sqrt{3}\Phi_{1b}(2,2;\iup/3)$ are units.
Respectively, they are roots of 
\begin{gather*}
x-1\\
x^{18}+6 x^{15}-93 x^{12}-304 x^9+420 x^6-102 x^3+1.
\end{gather*}
Lastly, Theorem~\ref{thm2}~(3) applies, and so their ratio
\[
\frac{\Phi_{1a}(2,2;\iup/3)}{\Phi_{1b}(2,2;\iup/3)}=4.60627\dots
\]
is a unit. Indeed, it is a root of 
\[
x^{18}-102x^{15}+420x^{12}-304x^9-93x^6+6x^3+1.
\]
\end{example}

\medskip
The remainder of this paper is organized as follows.
In Section~\ref{Sec_HL}  we recall some basic definitions and facts
from the theory of Hall--Littlewood polynomials. We use these facts 
to give a different combinatorial 
representation for the left-hand side of \eqref{Eq_mixed}
(see Lemma~\ref{TwoPointFour}).
Then, in Sections~\ref{Sec_Pf} and \ref{Sec_Pf2}, we prove
Theorems~\ref{Thm_Main}--\ref{Thm_Main3} and
Theorem~\ref{Thm_Main4}, respectively.
The proofs require Weyl denominator formulas, Macdonald identities,
and a lemma for $\C_n$ hypergeometric series from \cite{BW13}.
We also interpret each of the theorems
from the point of view of representation theory.
Namely, we explain how these identities correspond to specialized
characters of Kac--Moody algebras of affine type.

As noted above, the specializations of the $\theta(a;q)$ that arise in these
identities are essentially
modular functions of the type which have been studied extensively by Klein and
Siegel. This is the key fact which we employ to derive 
Theorems~\ref{thm} and \ref{thm2}.
In Section~\ref{SiegelFunctions} we recall the Galois 
theoretic properties of the singular values of Siegel functions as 
developed by Kubert and Lang, and
in Section~\ref{Proofs} we prove Theorems~\ref{thm} and \ref{thm2}.
In the last section we conclude with a detailed discussion
of examples of Theorems~\ref{thm} and \ref{thm2}.

\smallskip
\begin{Note}
One of the referees asked about Rogers--Ramanujan identities for affine 
Lie algebras other than those considered in this paper.
It is indeed possible to extend some of our results to also include 
$\B_n^{(1)}$ and $\A_{2n-1}^{(2)}$. However, the results
of \cite{BW13}---which are essential in the proofs of 
Theorems~\ref{Thm_Main}--\ref{Thm_Main3}---are not strong
enough to also deal with these two Kac--Moody algebras.
In \cite{RW15} Eric Rains and the third author present an alternative
method for expressing characters of affine Lie algebras in terms of
Hall--Littlewood polynomials to that of \cite{BW13}. Their method 
employs what are known as virtual Koornwinder integrals 
\cite{Rains05,RV07} instead of the $\C_n$ Bailey lemma used in \cite{BW13}.
This results in several further Rogers--Ramanujan identities, including
identities for $\B_n^{(1)}$ and $\A_{2n-1}^{(2)}$. 
At this stage it is not clear to us how to deal with $\D_n^{(1)}$ or
any of the exceptional affine Lie algebras.
\end{Note}

\section*{Acknowledgements}\noindent
 The authors thank Edward Frenkel, James Lepowsky, Dong Hwa Shin, and Drew Sills for their comments
on a preliminary version of this paper.

\section{The Hall--Littlewood polynomials}\label{Sec_HL}

Let $\la=(\la_1,\la_2,\dots)$ be an \textit{integer partition} 
\cite{Andrews76}, a nonincreasing sequence of nonnegative integers
$\la_1\geq\la_2\geq\dots$ with only finitely nonzero terms.
The positive $\la_i$ are called the \textit{parts}
of $\la$, and the number of parts, denoted $l(\la)$, is the
\textit{length} of $\la$. 
The \textit{size} $\abs{\la}$ of $\la$ is the sum of its parts.
The Ferrers--Young diagram of $\la$ consists of $l(\la)$ left-aligned
rows of squares such that the $i$th row contains $\la_i$ squares.
For example, the Ferrers--Young diagram of $\nu=(6,4,4,2)$ 
of length $4$ and size $16$ is
\[
\yng(6,4,4,2)
\]
The \textit{conjugate} partition $\la'$ corresponds to the transpose
of the Ferrers--Young diagram of $\la$.
For example, we have $\nu'=(4,4,3,3,1,1)$.
We define nonnegative integers $m_i=m_i(\la)$, for $i\geq 1$,
to be the \textit{multiplicities} of parts of size $i$, so that
$\abs{\la}=\sum_i i m_i$.
It is easy to see that $m_i=\la'_i-\la'_{i+1}$.
We say that a partition is \textit{even} if its parts are all even.
Note that $\la'$ is even if all multiplicities $m_i(\la)$
are even. The partition $\nu$ above is an even partition.
Given two partitions $\la,\mu$ we write $\mu\subseteq\la$ if the
diagram of $\mu$ is contained in the diagram of $\la$, or, equivalently,
if $\mu_i\leq \la_i$ for all $i$.
To conclude our discussion of partitions, we define the 
\textit{generalized $q$-shifted factorial}
\begin{equation}\label{Eq_blambda}
b_{\la}(q):=\prod_{i\geq 1} (q)_{m_i}=\prod_{i\geq 1} (q)_{\la'_i-\la'_{i+1}}.
\end{equation}
Hence, for $\nu$ as above we have $b_{\nu}(q)=(q)_1^2(q)_2$.

For a fixed positive integer $n$, let $x=(x_1,\dots,x_n)$.
Given a partition $\la$ such that $l(\la)\leq n$,
write $x^{\la}$ for the monomial $x_1^{\la_1}\dots x_n^{\la_n}$,
and define
\begin{equation}
v_{\la}(q)=\prod_{i=0}^n \frac{(q)_{m_i}}{(1-q)^{m_i}},
\end{equation}
where $m_0:=n-l(\la)$.
The \textit{Hall--Littlewood polynomial} $P_{\la}(x;q)$ 
is defined as the symmetric function \cite{Macdonald95}
\begin{equation}
P_{\la}(x;q)=\frac{1}{v_{\la}(q)}
\sum_{w\in\Symm_n} 
w\bigg(x^{\la}\prod_{i<j}\frac{x_i-qx_j}{x_i-x_j}\bigg),
\end{equation}
where the symmetric group $\Symm_n$ acts on $x$ by permuting the $x_i$.
It follows from the definition that $P_{\la}(x;q)$ is
a homogeneous polynomial of degree $\abs{\la}$, a fact used repeatedly
in the rest of this paper.
$P_{\la}(x;q)$ is defined to be identically $0$ if $l(\la)>n$.
The Hall--Littlewood polynomials may be extended
in the usual way to symmetric functions in
countably-many variables, see \cite{Macdonald95}.

Here we make this precise when $x$ is specialized to an infinite geometric progression.
For $x=(x_1,x_2,\dots)$ not necessarily finite, let $p_r$ be the $r$-th 
power sum symmetric function
\[
p_r(x)=x_1^r+x_2^r+\cdots,
\]
and $p_{\la}=\prod_{i\geq 1} p_{\la_i}$.
The power sums $\{p_{\la}(x_1,\dots,x_n)\}_{l(\la)\leq n}$ form a 
$\Rat$-basis of the ring of symmetric functions in $n$ variables.
If $\phi_q$ denotes the ring homomorphism $\phi_q(p_r)=p_r/(1-q^r)$,
then the \textit{modified Hall--Littlewood polynomials} $P'_{\la}(x;q)$ 
are defined as the image of the $P_{\la}(x;q)$ under $\phi_q$: 
\[
P'_{\la}=\phi_q\big(P_{\la}\big).
\]

We also require the Hall--Littlewood polynomials $Q_{\la}$ and $Q'_{\la}$ 
defined by
\begin{equation}\label{Eq_QQp}
Q_{\la}(x;q):=b_{\la}(q) P_{\la}(x;q)\quad\text{and}\quad
Q'_{\la}(x;q):=b_{\la}(q) P'_{\la}(x;q).
\end{equation}
Clearly, $Q'_{\la}=\phi_q\big(Q_{\la}\big)$.

Up to the point where the $x$-variables are specialized, our proof 
of Theorems~\ref{Thm_Main}--\ref{Thm_Main3}
will make use of the modified Hall--Littlewood polynomials, rather than the ordinary Hall--Littlewood polynomials.
Through specialization,  we arrive at $P_{\la}$ evaluated
at a geometric progression thanks to
\begin{equation}\label{Eq_PpP}
P_{\la}(1,q,q^2,\dots;q^n)=
P'_{\la}(1,q,\dots,q^{n-1};q^n),
\end{equation}
which readily follows from
\[
\phi_{q^n} \big( p_r(1,q,\dots,q^{n-1}) \big)
=\frac{1-q^{nr}}{1-q^r} \cdot \frac{1}{1-q^{nr}}
=p_r(1,q,q^2,\dots).
\]

From \cite{Kirillov00,WZ12} we may infer the following combinatorial
formula for the modified Hall--Littlewood polynomials:
\[
Q'_{\la}(x;q)=
\sum \prod_{i=1}^{\la_1}\prod_{a=1}^n
x_a^{\mu^{(a-1)}_i-\mu^{(a)}_i}
q^{\binom{\mu^{(a-1)}_i-\mu^{(a)}_i}{2}}
\qbin{\mu^{(a-1)}_i-\mu^{(a)}_{i+1}}{\mu^{(a-1)}_i-\mu^{(a)}_i}_q,
\]
where the sum is over partitions
$0=\mu^{(n)}\subseteq\cdots\subseteq\mu^{(1)}\subseteq\mu^{(0)}=\la'$
and
\[
\qbin{n}{m}_q=\begin{cases} \displaystyle \frac{(q)_{n}}{(q)_m(q)_{n-m}} &
\text{if $m\in\{0,1,\dots,n\}$} \\[3mm]
0 & \text{otherwise} \end{cases}
\]
is the usual $q$-binomial coefficient.
Therefore, by \eqref{Eq_blambda}--\eqref{Eq_PpP},
we have obtained the following combinatorial description of the
$q$-series we have assembled from the Hall--Littlewood polynomials.

\begin{lemma}\label{TwoPointFour}
If $m$ and $n$ are positive integers, then
\begin{multline}\label{Eq_comb}
\sum_{\substack{\la \\[1pt] \la_1\leq m}} 
q^{(\sigma+1)\abs{\la}} P_{2\la}(1,q,q^2,\dots;q^n)\\
=\sum \prod_{i=1}^{2m}\Bigg\{
\frac{q^{\frac{1}{2}(\sigma+1)\mu^{(0)}_i}}
{(q^n;q^n)_{\mu^{(0)}_i-\mu^{(0)}_{i+1}}}
\prod_{a=1}^n
q^{\mu_i^{(a)}+n\binom{\mu^{(a-1)}_i-\mu^{(a)}_i}{2}}
\qbin{\mu^{(a-1)}_i-\mu^{(a)}_{i+1}}{\mu^{(a-1)}_i-\mu^{(a)}_i}_{q^n}
\Bigg\},
\end{multline}
where the sum on the right is over partitions
$0=\mu^{(n)}\subseteq\cdots\subseteq\mu^{(1)}\subseteq\mu^{(0)}$
such that $(\mu^{(0)})'$ is even and $l(\mu^{(0)})\leq 2m$.
\end{lemma}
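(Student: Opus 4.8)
The plan is to derive \eqref{Eq_comb} directly from the combinatorial formula for $Q'_{\la}$ recorded just above the lemma, by specializing the $x$-variables to the finite geometric progression $(1,q,\dots,q^{n-1})$ with Hall--Littlewood parameter $q^n$, and then summing over $\la$. First I would replace $P_{2\la}(1,q,q^2,\dots;q^n)$ by $P'_{2\la}(1,q,\dots,q^{n-1};q^n)$ using \eqref{Eq_PpP}, and then write $P'_{2\la}=Q'_{2\la}/b_{2\la}(q^n)$ via \eqref{Eq_QQp} (with $q$ replaced by $q^n$ throughout). Substituting the combinatorial formula for $Q'_{2\la}(x;q^n)$ expresses each summand as a sum over chains $0=\mu^{(n)}\subseteq\cdots\subseteq\mu^{(1)}\subseteq\mu^{(0)}=(2\la)'$, with the product over $i$ running up to $(2\la)_1=2\la_1$.

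The key computation is the specialization of the monomial $\prod_{a=1}^n x_a^{\mu^{(a-1)}_i-\mu^{(a)}_i}$ at $x_a=q^{a-1}$. Its exponent is $\sum_{a=1}^n (a-1)\big(\mu^{(a-1)}_i-\mu^{(a)}_i\big)$, which after an Abel summation (telescoping) collapses to $\sum_{a=1}^n \mu^{(a)}_i$, using $\mu^{(n)}=0$. This exactly supplies the factor $q^{\mu^{(a)}_i}$ appearing inside the product over $a$ in \eqref{Eq_comb}, while the $(q^n)^{\binom{\,\cdot\,}{2}}$ and $q$-binomial factors of the combinatorial formula transcribe verbatim (with base $q^n$).

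It then remains to account for the two normalizing factors. Writing $b_{2\la}(q^n)=\prod_{i\ge 1}(q^n;q^n)_{\mu^{(0)}_i-\mu^{(0)}_{i+1}}$ via \eqref{Eq_blambda} together with $\mu^{(0)}=(2\la)'$ produces exactly the denominators $(q^n;q^n)_{\mu^{(0)}_i-\mu^{(0)}_{i+1}}$; and since $\abs{\la}=\tfrac12\abs{2\la}=\tfrac12\abs{\mu^{(0)}}=\tfrac12\sum_i\mu^{(0)}_i$, the prefactor distributes as $q^{(\sigma+1)\abs{\la}}=\prod_i q^{\frac12(\sigma+1)\mu^{(0)}_i}$. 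Both factors can thus be folded into the single product over $i$.

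Finally I would merge the outer sum over $\la$ with the inner chain-sum into one sum over chains $0=\mu^{(n)}\subseteq\cdots\subseteq\mu^{(0)}$ by dropping the constraint $\mu^{(0)}=(2\la)'$ in favor of an intrinsic description of its image. The map $\la\mapsto(2\la)'$ is a bijection onto those $\mu^{(0)}$ whose conjugate $(\mu^{(0)})'=2\la$ is even, and $\la_1\le m$ translates into $l(\mu^{(0)})=(2\la)_1\le 2m$; for $i>2\la_1$ all factors are trivial, so the product over $i$ may be extended to $\prod_{i=1}^{2m}$ without changing its value. I expect the main obstacle to be precisely this reindexing step---correctly identifying the summation set as the conjugates of even partitions of length at most $2m$, and tracking the half-integer $q$-exponent bookkeeping---rather than any individual computation, each of which is routine once the normalizations are in place.
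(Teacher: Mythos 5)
Your proposal is correct and follows exactly the paper's (largely implicit) argument: the paper derives the lemma by combining the Kirillov--Warnaar--Zudilin combinatorial formula for $Q'_{\la}$ with \eqref{Eq_blambda}--\eqref{Eq_PpP}, which is precisely your chain $P_{2\la}(1,q,q^2,\dots;q^n)=P'_{2\la}(1,q,\dots,q^{n-1};q^n)=Q'_{2\la}/b_{2\la}$ specialized at $x_a=q^{a-1}$ in base $q^n$. All the details you supply---the telescoping of the monomial exponent to $\sum_a\mu_i^{(a)}$, the identification $b_{2\la}(q^n)=\prod_i(q^n;q^n)_{\mu_i^{(0)}-\mu_{i+1}^{(0)}}$, the distribution of $q^{(\sigma+1)\abs{\la}}$, and the bijection $\la\mapsto(2\la)'$ onto partitions with even conjugate and length at most $2m$---are accurate and are exactly what the paper leaves to the reader.
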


Lemma~\ref{TwoPointFour}
may be used to express the sum sides of 
\eqref{Eq_RR-A2n2}--\eqref{Eq_mixed} combinatorially.
Moreover, we have that
 \eqref{Eq_comb} generalizes the 
sums in (\ref{G}), (\ref{H}), and (\ref{Eq_AG}). To see this, we note that the above
simplifies for $n=1$ to
\[
\sum_{\substack{\la \\[1pt] \la_1\leq m}} 
q^{(\sigma+1)\abs{\la}} P_{2\la}(1,q,q^2,\dots;q)
=\sum
\prod_{i=1}^{2m} \frac{q^{\frac{1}{2}\mu_i(\mu_i+\sigma)}}{(q)_{\mu_i-\mu_{i+1}}}
\]
summed on the right over partitions $\mu$ of length at most $2m$ whose conjugates are even.
Such partitions are characterized by the restriction 
$\mu_{2i}=\mu_{2i-1}=:r_i$ so that we get
\[
\sum_{\substack{\la \\[1pt] \la_1\leq m}} 
q^{(\sigma+1)\abs{\la}} P_{2\la}(1,q,q^2,\dots;q)
=\sum_{r_1\geq\cdots\geq r_m\geq 0} \prod_{i=1}^m
\frac{q^{r_i(r_i+\sigma)}}{(q)_{r_i-r_{i+1}}}
\]
in accordance with \eqref{Eq_AG}.

If instead we consider $m=1$ and replace
$\mu^{(j)}$ by $(r_j,s_j)$ for $j\geq 0$,
we find
\begin{align*}
&\sum_{r=0}^{\infty} q^{(\sigma+1)r} P_{(2^r)}(1,q,q^2,\dots;q^n)\\
&\quad=\sum
\frac{q^{(\sigma+1)r_0}}{(q^n;q^n)_{r_0}}
\prod_{j=1}^n
q^{r_j+s_j+n\binom{r_{j-1}-r_j}{2}
+n\binom{s_{j-1}-s_j}{2}}
\qbin{r_{j-1}-s_j}{r_{j-1}-r_j}_{q^n} 
\qbin{s_{j-1}}{s_j}_{q^n} \\
&\quad=\frac{(q^{n+4};q^{n+4})_{\infty}}{(q)_{\infty}}\, 
\cdot \theta\big(q^{2-\sigma};q^{n+4}\big),
\end{align*}
where the second sum is over $r_0,s_0,\dots,r_{n-1},s_{n-1}$ such that
$r_0=s_0$, and $r_n=s_n:=0$.

We conclude this section with a remark about Theorem~\ref{Thm_Main4}.
Due to the occurrence of the limit, the left-hand side does not take the
form of the usual sum-side of a Rogers--Ramanujan-type identity.
For special cases it is, however, possible to eliminate the limit.
For example, for partitions of the form $(2^r)$ we found that
\begin{equation}\label{Eq_Q2r}
P_{(2^r)}(1,q,q^2,\dots;q^{2n+\delta}) 
=\sum_{r\geq r_1\geq\dots\geq r_n\geq 0}
\frac{q^{r^2-r+r_1^2+\cdots+r_n^2+r_1+\cdots+r_n}}
{(q)_{r-r_1}(q)_{r_1-r_2}
\cdots(q)_{r_{n-1}-r_n}(q^{2-\delta};q^{2-\delta})_{r_n}}
\end{equation}
for $\delta=0,1$.
This turns the $m=2$ case of Theorem~\ref{Thm_Main4} into
\[
\sum_{r_1\geq\dots\geq r_n\geq 0}
\frac{q^{r_1^2+\cdots+r_n^2+r_1+\cdots+r_n}}
{(q)_{r_1-r_2}\cdots(q)_{r_{n-1}-r_n}(q^{2-\delta};q^{2-\delta})_{r_n}} 
=\frac{(q^{2n+2+\delta};q^{2n+2+\delta})_{\infty}}
{(q)_{\infty}}\, \cdot \theta(q;q^{2n+2+\delta}).
\]
For $\delta=1$ this is the $i=1$ case of the
Andrews--Gordon identity \eqref{Eq_AG} (with $m$ replaced by $n$). 
For $\delta=0$ it corresponds to the $i=1$ case of 
\eqref{Eq_Bressoud}.
We do not know how to generalize \eqref{Eq_Q2r}
to arbitrary rectangular shapes.

\section{Proof of Theorems~\ref{Thm_Main}--\ref{Thm_Main3}}\label{Sec_Pf}
Here we prove Theorems~\ref{Thm_Main}--\ref{Thm_Main3}.
We begin by recalling key aspects of the classical works of Andrews 
and Watson which give hints of the generalizations we obtain.

\subsection{The Watson--Andrews approach}

In 1929 Watson proved the Rogers--Ramanujan identities \eqref{G} and \eqref{H}
by first proving a new basic hypergeometric series transformation between
a terminating balanced $_4\phi_3$ series and a terminating 
very-well-poised $_8\phi_7$ series \cite{Watson29}
\begin{multline}\label{Eq_Watson}
\frac{(aq,aq/bc)_N}{(aq/b,aq/c)_N}
\sum_{r=0}^N \frac{(b,c,aq/de,q^{-N})_r}
{(q,aq/d,aq/e,bcq^{-N}/a)_r} \, q^r \\
=\sum_{r=0}^N \frac{1-a q^{2r}}{1-a}\, \cdot
\frac{(a,b,c,d,e,q^{-N})_r}{(q,aq/b,aq/c,aq/d,aq/e)_r}
\cdot \bigg(\frac{a^2q^{N+2}}{bcde}\bigg)^r .
\end{multline} 
Here $a,b,c,d,e$ are indeterminates, $N$ is a nonnegative integer
and
\[
(a_1,\dots,a_m)_k:=(a_1,\dots,q_m;q)=(a_1;q)_k\cdots (a_m;q)_k.
\]
By letting $b,c,d,e$ tend to infinity and taking the
nonterminating limit $N\to\infty$,
Watson arrived at what is known as the Rogers--Selberg 
identity \cite{Rogers19,Selberg36}\footnote{Here and elsewhere in the paper
we ignore questions of convergence. From an analytic point of view, 
the transition from \eqref{Eq_Watson} to \eqref{Eq_RS} requires
the use of the dominated convergence theorem, imposing the 
restriction $\abs{q}<1$ on the Rogers--Selberg identity.
We however choose to view this identity as an identity between 
formal power series in $q$,
in line with the combinatorial and representation-theoretic 
interpretations of Rogers--Ramanujan-type identities.} 
\begin{equation}\label{Eq_RS}
\sum_{r=0}^{\infty} \frac{a^r q^{r^2}}{(q)_r} \\
=\frac{1}{(aq)_{\infty}} \sum_{r=0}^{\infty}
\frac{1-a q^{2r}}{1-a}\, \cdot\frac{(a)_r}{(q)_r}\,
\cdot (-1)^r a^{2r} q^{5\binom{r}{2}+2r}.
\end{equation}
For $a=1$ or $a=q$ the sum on the right can be expressed in product-form 
by the Jacobi triple-product identity
\[
\sum_{r=-\infty}^{\infty} (-1)^r x^r q^{\binom{r}{2}}
=(q)_{\infty}\, \cdot \theta(x;q),
\]
resulting in (\ref{G}) and (\ref{H}).

Almost 50 years after Watson's work, Andrews showed that the 
Andrews--Gordon identities \eqref{Eq_AG} for $i=1$ and $i=m+1$ follow in 
a similar way from a multiple series generalization of 
\eqref{Eq_Watson} in which the $_8\phi_7$ series on the right is replaced 
by a terminating very-well-poised $_{2m+6}\phi_{2m+5}$ series depending 
on $2m+2$ parameters instead of $b,c,d,e$ \cite{Andrews75}.
Again the key steps are to let all these parameters
tend to infinity, to take the nonterminating limit, 
and to then express the $a=1$ or $a=q$ instances of the resulting
sum as a product by the Jacobi triple-product identity.

Recently, Bartlett and the third author obtained an analog of
Andrews' multiple series transformation for the $\C_n$ root system 
\cite[Theorem 4.2]{BW13}. 
Apart from the variables $(x_1,\dots,x_n)$---which play the role of
$a$ in \eqref{Eq_Watson}, and are related to the underlying root 
system---the $\C_n$ Andrews transformation again contains $2m+2$
parameters.
Unfortunately, simply following the Andrews--Watson procedure
is no longer sufficient. In \cite{Milne94} Milne already 
obtained the $\C_n$ analogue of the Rogers--Selberg identity 
\eqref{Eq_RS} (the $m=1$ case of \eqref{Eq_Cn-RS-m} below) and 
considered specializations along the lines of Andrews and Watson. 
Only for $\C_2$ did this result in a Rogers--Ramanujan-type identity:
the modulus $6$ case of \eqref{Eq_RR-Dn} mentioned previously.

The first two steps towards a proof of \eqref{Eq_RR-A2n2}--\eqref{Eq_mixed},
however, are the same as those of Watson and Andrews:
we let all $2m+2$ parameters in 
the $\C_n$ Andrews transformation tend to infinity and take the 
nonterminating limit. Then, as shown in \cite{BW13}, the right-hand side 
can be expressed in terms of modified Hall--Littlewood polynomials, 
resulting in the level-$m$ $\C_n$ Rogers--Selberg identity
\begin{equation}\label{Eq_Cn-RS-m}
\sum_{\substack{\la \\[1pt] \la_1\leq m}}
q^{\abs{\la}} P'_{2\la}(x;q)=L_m^{(0)}(x;q),
\end{equation}
where
\[
L^{(0)}_m(x;q) :=
\sum_{r\in\NN^n}\frac{\Delta_{\C}(x q^r)}{\Delta_{\C}(x)}\,
\prod_{i=1}^n x_i^{2(m+1)r_i} q^{(m+1)r_i^2+n\binom{r_i}{2}} 
\cdot \prod_{i,j=1}^n \Big({-}\frac{x_i}{x_j}\Big)^{r_i} 
\frac{(x_i x_j)_{r_i}}{(q x_i/x_j)_{r_i}}.
\]
Here we have that
\[
\Delta_{\C}(x):=\prod_{i=1}^n (1-x_i^2)\prod_{1\leq i<j\leq n}
(x_i-x_j)(x_ix_j-1)
\]
is the $\C_n$ Vandermonde product, and $f(xq^r)$ is short\-hand for 
$f(x_1q^{r_1},\dots,x_nq^{r_n})$.

\smallskip
\begin{remark}
As mentioned previously, \eqref{Eq_Cn-RS-m} for $m=1$ is Milne's 
$\C_n$ Rogers--Selberg formula \cite[Corollary 2.21]{Milne94}.
\end{remark}

The strategy for the proofs of Theorems~\ref{Thm_Main}--\ref{Thm_Main3}
is now simple to describe.
By comparing the left-hand side of \eqref{Eq_Cn-RS-m} with that of 
\eqref{Eq_RR-A2n2}--\eqref{Eq_RR-Dn}, it follows that we should 
make the simultaneous substitutions
\begin{equation}\label{Eq_spec}
q\mapsto q^n,\qquad x_i\mapsto q^{(n+\sigma+1)/2-i}\;\; (1\leq i\leq n).
\end{equation}
Then, by the homogeneity and symmetry of the (modified) Hall--Littlewood 
polynomials and \eqref{Eq_PpP}, we have
\[
\sum_{\substack{\la \\[1pt] \la_1\leq m}}
q^{\abs{\la}} P'_{2\la}(x;q)  \longmapsto
\sum_{\substack{\la \\[1pt] \la_1\leq m}}
q^{(\sigma+1)\abs{\la}} P_{2\la}(1,q,q^2,\dots;q^n).
\]

Therefore, we wish to carry out these maneuvers and prove that the resulting
right-hand side can be described as a product of modified theta functions
in the four families in the theorems.
The problem we face is that making the substitutions \eqref{Eq_spec} 
in the right-hand side of \eqref{Eq_Cn-RS-m} 
and then writing the resulting $q$-series
in product form is very difficult.

To get around this problem, we take a rather different route and 
(up to a small constant) first double the rank of the 
underlying $\C_n$ root system
and then take a limit in which products of pairs of $x$-variables tend 
to one. To do so we require another result from \cite{BW13}.

First we  extend our earlier definition of the $q$-shifted
factorial to
\begin{equation}
(a)_k=(a)_{\infty}/(aq^k)_{\infty}.
\end{equation}
Importantly, we note that $1/(q)_k=0$ for $k$ a negative integer. Then,
for $x=(x_1,\dots,x_n)$, $p$ an integer such that $0\leq p\leq n$ and 
$r\in\Z^n$, we have
\begin{multline}\label{Eq_Def-Lp}
L^{(p)}_m(x;q):=
\sum_{r\in\Z^n} \frac{\Delta_{\C}(x q^r)}{\Delta_{\C}(x)}\,
\prod_{i=1}^n x_i^{2(m+p+1)r_i}
q^{(m+1)r_i^2+(n+p)\binom{r_i}{2}} \\
\times
\prod_{i=1}^n \prod_{j=p+1}^n
\Big({-}\frac{x_i}{x_j}\Big)^{r_i} 
\frac{(x_i x_j)_{r_i}}{(qx_i/x_j)_{r_i}}.
\end{multline}
Note that the summand of $L^{(p)}_m(x;q)$ vanishes 
if one of $r_{p+1},\dots,r_n<0$.

The following lemma will be crucial for our strategy to work.

\begin{lemma}[{\cite[Lemma~A.1]{BW13}}]
For $1\leq p\leq n-1$,
\begin{equation}\label{Eq_key}
\lim_{x_{p+1}\to x_p^{-1}} L^{(p-1)}_m(x;q)=
L^{(p)}_m(x_1,\dots,x_{p-1},x_{p+1},\dots,x_n;q).
\end{equation}
\end{lemma}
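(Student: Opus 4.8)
The plan is to analyze the limit $x_{p+1}\to x_p^{-1}$ termwise in the formal power series ring $\Rat(x_1,\dots,x_n)[[q]]$, tracking which summands of $L^{(p-1)}_m(x;q)$ survive. Writing $\epsilon:=x_px_{p+1}-1$, the only factor in definition \eqref{Eq_Def-Lp} of $L^{(p-1)}_m$ that becomes singular is the global prefactor $1/\Delta_{\C}(x)$: among all factors of $\Delta_{\C}(x)=\prod_i(1-x_i^2)\prod_{i<j}(x_i-x_j)(x_ix_j-1)$, only $(x_px_{p+1}-1)=\epsilon$ vanishes as $\epsilon\to0$, and to first order, so $1/\Delta_{\C}(x)$ has a simple pole. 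Hence a summand indexed by $r\in\Z^n$ contributes to the limit precisely when its $r$-dependent part carries a compensating simple zero in $\epsilon$, and it contributes $0$ if that part vanishes to order $\geq 2$.

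First I would locate the $r$-dependent zeros. The diagonal factors $1/(q)_{r_j}$ (coming from $i=j$ in $\prod_{i=1}^n\prod_{j=p}^n$) force $r_p,\dots,r_n\geq 0$, so I may assume $r_p,r_{p+1}\geq 0$. As $\epsilon\to 0$: the Vandermonde numerator factor $(x_px_{p+1}q^{r_p+r_{p+1}}-1)=\big((1+\epsilon)q^{r_p+r_{p+1}}-1\big)$ vanishes to first order exactly when $r_p+r_{p+1}=0$, i.e. $r_p=r_{p+1}=0$; the $q$-shifted factorial $(x_px_{p+1})_{r_p}=(1-x_px_{p+1})\cdots$ has leading term $-\epsilon\,(q)_{r_p-1}$ when $r_p\geq 1$ (and equals $1$ when $r_p=0$); and symmetrically $(x_{p+1}x_p)_{r_{p+1}}$, arising from $(i,j)=(p+1,p)$, contributes a simple zero when $r_{p+1}\geq 1$. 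Counting orders against the simple pole, the summand survives if and only if $r_pr_{p+1}=0$, while all terms with $r_p,r_{p+1}\geq 1$ die. This splits the surviving terms into the two families $\{r_{p+1}=0,\ r_p\geq 0\}$ and $\{r_p=0,\ r_{p+1}\geq 0\}$, overlapping in $r_p=r_{p+1}=0$.

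The second step is the recombination. For each fixed $i\notin\{p,p+1\}$ the two factors with $j=p$ and $j=p+1$ fuse as $x_{p+1}\to x_p^{-1}$: the prefactors multiply to $\big(-x_i/x_p\big)^{r_i}\big(-x_i/x_{p+1}\big)^{r_i}\to x_i^{2r_i}$, supplying exactly the shift $x_i^{2(m+p)r_i}\mapsto x_i^{2(m+p+1)r_i}$ needed to pass from the $L^{(p-1)}$-summand to the $L^{(p)}$-summand, while $(x_ix_p)_{r_i}(x_ix_{p+1})_{r_i}/\big[(qx_i/x_p)_{r_i}(qx_i/x_{p+1})_{r_i}\big]$ tends to the single corresponding $L^{(p)}$ factor. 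Setting $\tilde r_p:=-r_p$ on the first family and $\tilde r_p:=r_{p+1}$ on the second, the two families glue along $\tilde r_p=0$ into a single unrestricted sum over $\tilde r_p\in\Z$, precisely converting the constrained index into the free index of $L^{(p)}_m$ in the reduced variable set $(x_1,\dots,x_{p-1},x_{p+1},\dots,x_n)$. Note that the coefficient of $\binom{r_i}{2}$ equals $n+p-1$ for both $L^{(p-1)}$ in $n$ variables and $L^{(p)}$ in $n-1$ variables, so no further adjustment is needed there.

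The main obstacle is the bookkeeping in this recombination, which I expect to require the most care in two places. First, on the family $r_{p+1}=0$ reindexed by $\tilde r_p=-r_p$, one must check that the surviving simple zero $-\epsilon\,(q)_{r_p-1}$ from $(x_px_{p+1})_{r_p}$, after cancelling the simple pole of $1/\Delta_{\C}(x)$, reproduces the correct $q$-power, sign, and $q$-shifted-factorial factors of the $L^{(p)}$-summand at the negative index $\tilde r_p$; here the identity $\binom{-r_p}{2}=\binom{r_p+1}{2}$ together with $1/(q)_k=0$ for $k<0$ must be used to align the two parametrizations. Second, one must verify that the limit of the Vandermonde quotient $\Delta_{\C}(xq^r)/\Delta_{\C}(x)$, after cancelling the $\epsilon$ from the pole against the appropriate numerator zero, collapses to the $\C_{n-1}$ Vandermonde quotient in the reduced variables, using the invariance of $\Delta_{\C}$ under $x\mapsto x^{-1}$ up to sign to reconcile omitting $x_p$ with keeping $x_{p+1}=x_p^{-1}$. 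Once these two factor-by-factor matchings are in place, summing the two glued families yields $L^{(p)}_m(x_1,\dots,x_{p-1},x_{p+1},\dots,x_n;q)$, completing the proof.
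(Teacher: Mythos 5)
The paper does not actually prove this lemma; it is imported verbatim from \cite[Lemma~A.1]{BW13}, so there is no internal proof to compare against. Your plan is the natural direct verification and follows the same route as the cited source: a termwise pole/zero count in $\epsilon=x_px_{p+1}-1$ against the simple pole of $1/\Delta_{\C}(x)$, the conclusion that only summands with $r_pr_{p+1}=0$ survive (the compensating simple zero coming from the Vandermonde factor $x_px_{p+1}q^{r_p+r_{p+1}}-1$ when $r_p=r_{p+1}=0$, and from $(x_px_{p+1})_{r_p}$, respectively $(x_{p+1}x_p)_{r_{p+1}}$, otherwise), and the gluing of the two boundary families into one sum over $\tilde r_p\in\Z$ via $\tilde r_p=-r_p$ and $\tilde r_p=r_{p+1}$. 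Your numerology is also correct: the prefactors $(-x_i/x_p)^{r_i}(-x_i/x_{p+1})^{r_i}=(x_i^2/x_px_{p+1})^{r_i}\to x_i^{2r_i}$ account for the shift $2(m+p)\to 2(m+p+1)$, and the coefficient of $\binom{r_i}{2}$ is $n+p-1$ on both sides. One sentence, however, is false as literally written, though it is subsumed by the care you flag at the end: for $i\notin\{p,p+1\}$ the fused ratio $(x_ix_p)_{r_i}(x_i/x_p)_{r_i}\big/\big[(qx_i/x_p)_{r_i}(qx_ix_p)_{r_i}\big]$ does \emph{not} tend to a ``single corresponding $L^{(p)}$ factor''---in $L^{(p)}$ there is no $q$-factorial factor with old label $j\in\{p,p+1\}$ at all. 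Instead it leaves behind the rational function $\frac{(1-x_ix_p)(1-x_i/x_p)}{(1-x_ix_pq^{r_i})(1-x_iq^{r_i}/x_p)}$, which cancels only jointly with the surplus factors of the $\C_n$ Vandermonde quotient relative to the $\C_{n-1}$ one; for instance, on the family $r_p=0$ the pair-$(i,p)$ factors $\frac{(x_iq^{r_i}-x_p)(x_iq^{r_i}x_p-1)}{(x_i-x_p)(x_ix_p-1)}$ absorb it exactly. With that allocation corrected, your two flagged ``obstacles'' (the negative-index reparametrization using $\binom{-k}{2}=\binom{k+1}{2}$ and $1/(q)_k=0$ for $k<0$, and the Vandermonde collapse) are indeed the only remaining bookkeeping, and the termwise analysis is legitimate in the formal $q$-adic sense since the quadratic $q$-exponents ensure only finitely many $r$ contribute to each power of $q$. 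The skeleton of your argument is sound and matches the known proof.
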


This will be the key to the proof of the generalized 
Rogers--Ramanujan identities of Theorems~\ref{Thm_Main}--\ref{Thm_Main3}
although the level of difficulty varies considerably from case to case.

We begin with the simplest proof, that of the $\C_n$ Rogers--Ramanujan
and Andrews--Gordon identities of Theorem~\ref{Thm_Main2}. 
Although this theorem may also be proved more directly by principally
specializing \cite[Theorem 1.1]{BW13} (more on this later), 
we take a more indirect approach in order to describe the general method 
using the simplest available example. 
For the $\A_{2n}^{(2)}$ and $\D_{n+1}^{(2)}$ Rogers--Ramanujan and 
Andrews--Gordon identities we have no analogues of \cite[Theorem 1.1]{BW13},
and in these cases we rely on to the method described below.

\subsection{Proof of Theorem~\ref{Thm_Main2}}
Here we carry out the strategy described in the previous section by 
making use of the $\C_n$ and $\B_n$ Weyl denominator formulas, 
and the $\D_{n+1}^{(2)}$ Macdonald identity.

\begin{proof}[Proof of Theorem~\ref{Thm_Main2}]

By iterating \eqref{Eq_key}, we have
\[
\lim_{y_1\to x_1^{-1}}\dots \lim_{y_n\to x_n^{-1}}
L^{(0)}_m(x_1,y_1,\dots,x_n,y_n)=L^{(n)}_m(x_1,\dots,x_n).
\]
Hence, after replacing $x\mapsto(x_1,y_1,\dots,x_n,y_n)$ in 
\eqref{Eq_Cn-RS-m} (which corresponds to the doubling of the rank
mentioned previously) and taking the $y_i\to x_i^{-1}$ limit 
for $1\leq i\leq n$, we find
\begin{multline}\label{Eq_CnmLa0}
\sum_{\substack{\la \\[1pt] \la_1\leq m}}
q^{\abs{\la}} P'_{2\la}(x^{\pm};q)=
\frac{1}{(q)_{\infty}^n\prod_{i=1}^n \theta(x_i^2;q)
\prod_{1\leq i<j\leq n} \theta(x_i/x_j,x_ix_j;q)} \\ \times
\sum_{r\in\Z^n} \Delta_{\C}(x q^r)
\prod_{i=1}^n x_i^{\kappa r_i-i+1} q^{\frac{1}{2}\kappa r_i^2-nr_i},
\end{multline}
where $\kappa=2m+2n+2$ and $f(x^{\pm})=f(x_1,x_1^{-1},\dots,x_n,x_n^{-1})$.
Next we make the simultaneous substitutions
\begin{equation}\label{Eq_subs}
q\mapsto q^{2n},\qquad x_i\mapsto q^{n-i+1/2}=:\hat{x}_i\;\; (1\leq i\leq n),
\end{equation}
which corresponds to \eqref{Eq_spec} with $(n,\sigma)\mapsto (2n,0)$.
By  the identity
\[
(q^{2n};q^{2n})_{\infty}^n\cdot \prod_{i=1}^n \theta(q^{2n-2i+1};q^{2n})
\cdot \prod_{1\leq i<j\leq n} \theta(q^{j-i},q^{2n-i-j+1};q^{2n})=
\frac{(q)_{\infty}^{n+1}}{(q^2;q^2)_{\infty}},
\]
and
\begin{align*}
&q^{2n\abs{\la}} P'_{2\la}(q^{n-1/2},q^{1/2-n},\dots,q^{1/2},q^{-1/2};q^{2n}) 
\hspace{-10mm}
&& \\
&\quad=q^{2n\abs{\la}} P'_{2\la}(q^{1/2-n},q^{3/2-n},\dots,q^{n-1/2};q^{2n}) 
\hspace{-10mm}
&& \text{by symmetry} \\
&\quad=q^{\abs{\la}} P'_{2\la}(1,q,\dots,q^{2n-1};q^{2n}) && \text{by homogeneity} \\ 
&\quad=q^{\abs{\la}} P_{2\la}(1,q,q^2,\dots;q^{2n}) && \text{by \eqref{Eq_PpP}},
\end{align*}
we obtain
\begin{equation}\label{Eq_Mdef}
\sum_{\substack{\la \\[1pt] \la_1\leq m}}
q^{\abs{\la}} P_{2\la}\big(1,q,q^2,\dots;q^{2n}\big) =
\frac{(q^2;q^2)_{\infty}}{(q)_{\infty}^{n+1}}\, \M,
\end{equation}
where 
\[
\M:=\sum_{r\in\Z^n} \Delta_{\C}(\hat{x} q^{2nr})\,
\prod_{i=1}^n \hat{x}_i^{\kappa r_i-i+1} q^{n\kappa r_i^2-2n^2 r_i}.
\]

We must express $\M$ in product form. 
As a first step, we use the $\C_n$ Weyl denominator
formula \cite[Lemma 2]{Krattenthaler99}
\begin{equation}\label{Eq_detC}
\Delta_{\C}(x)=\det_{1\leq i,j\leq n} \big(x_i^{j-1}-x_i^{2n-j+1}\big),
\end{equation}
as well as multilinearity, to write $\M$ as
\begin{equation}\label{Eq_begin}
\M=\det_{1\leq i,j\leq n} 
\bigg(\sum_{r\in\Z} \hat{x}_i^{\kappa r-i+1}
q^{n\kappa r^2-2n^2r}
\Big( (\hat{x}_i q^{2nr})^{j-1}-(\hat{x}_iq^{2nr})^{2n-j+1}\Big)\bigg).
\end{equation}
We now replace $(i,j)\mapsto (n-j+1,n-i+1)$ and, viewing the
resulting determinant as being of the form 
$\det\big(\sum_r u_{ij;r}-\sum_r v_{ij;r}\big)$, we change the summation index
$r\mapsto -r-1$ in the sum over $v_{ij;r}$. Then we find that
\begin{equation}\label{Eq_end}
\M=\det_{1\leq i,j\leq n}\bigg(
q^{a_{ij}} \sum_{r\in\Z} y_i^{2nr-i+1}
q^{2n\kappa\binom{r}{2}+\frac{1}{2}\kappa r} 
\Big((y_iq^{\kappa r})^{j-1}-(y_i q^{\kappa r})^{2n-j}\Big)\bigg),
\end{equation}
where $y_i=q^{\kappa/2-i}$ and 
$a_{ij}=j^2-i^2+(i-j)(\kappa+1)/2$.
Since the factor $q^{a_{ij}}$ does not contribute to the
determinant, we can apply the $\B_n$ Weyl denominator formula 
\cite{Krattenthaler99}
\begin{equation}\label{Eq_Bn-VdM}
\det_{1\leq i,j\leq n} \big(x_i^{j-1}-x_i^{2n-j}\big)=
\prod_{i=1}^n (1-x_i)
\prod_{1\leq i<j\leq n} (x_i-x_j)(x_ix_j-1)=:\Delta_{\B}(x)
\end{equation}
to obtain
\[
\M=\sum_{r\in\Z^n} \Delta_{\B}(y q^{\kappa r})
\prod_{i=1}^n y_i^{2nr_i-i+1} 
q^{2n\kappa\binom{r_i}{2}+\frac{1}{2}\kappa r_i}.
\]
By the $\D_{n+1}^{(2)}$ Macdonald identity \cite{Macdonald72}
\begin{multline*}
\sum_{r\in\Z^n} \Delta_{\B}(xq^r)
\prod_{i=1}^n x_i^{2nr_i-i+1} 
q^{2n\binom{r_i}{2}+\frac{1}{2}r_i} \\
=(q^{1/2};q^{1/2})_{\infty}(q)_{\infty}^{n-1}
\prod_{i=1}^n \theta(x_i;q^{1/2})_{\infty}
\prod_{1\leq i<j\leq n} \theta(x_i/x_j,x_ix_j;q)
\end{multline*}
with $(q,x)\mapsto (q^{\kappa},y)$ this yields
\begin{equation}\label{Eq_Mcn}
\M=(q^{\kappa/2};q^{\kappa/2})_{\infty}
(q^{\kappa};q^{\kappa})_{\infty}^{n-1}
 \prod_{i=1}^n  \theta\big(q^i;q^{\kappa/2}\big)
\prod_{1\leq i<j\leq n} \theta\big(q^{j-i},q^{i+j};q^{\kappa}\big),
\end{equation}
where we have also used the simple symmetry 
$\theta(q^{a-b};q^a)=\theta(q^b;q^a)$.
Substituting \eqref{Eq_Mcn} into \eqref{Eq_Mdef} proves the first equality 
of \eqref{Eq_RR-Cn}.

Establishing the second equality is a straightforward
exercise in manipulating infinite products, and we omit the details.
\end{proof}

\medskip

There is a somewhat different approach to \eqref{Eq_RR-Cn} based on
the representation theory of the affine Kac--Moody algebra 
$\C_n^{(1)}$ \cite{Kac90}. 
Let $I=\{0,1,\dots,n\}$, and
$\alpha_i$, $\alpha^{\vee}_i$ and $\La_i$ for $i\in I$
the simple roots, simple coroots and fundamental weights
of $\C_n^{(1)}$.
Let $\ip{\cdot}{\cdot}$ denote the usual pairing between the Cartan
subalgebra $\hfrak$ and its dual $\hfrak^{\ast}$, 
so that $\ip{\La_i}{\alpha_j^{\vee}}=\delta_{ij}$.
Finally, let $V(\La)$ be the integrable highest-weight module of 
$\C_n^{(1)}$ of highest weight $\La$ with character $\ch V(\La)$.  

The homomorphism
\begin{equation}\label{Eq_PS}
F_{\mathds{1}}:~\mathbb{C}[[\eup^{-\alpha_0},\dots,\eup^{-\alpha_n}]]
\to \mathbb{C}[[q]],\qquad 
F_{\mathds{1}}(\eup^{-\alpha_i})=q\quad\text{for all $i\in I$}
\end{equation}
is known as principal specialization \cite{Lepowsky79}.
Subject to this specialization, $\ch V(\La)$
admits a simple product form as follows.
Let $\rho$ be the Weyl vector (that is $\ip{\rho}{\alpha_i^{\vee}}=1$
for $i\in I$) and $\mult(\alpha)$ the multiplicity of $\alpha$.
Then \cite{Kac78, Lepowsky82} we have
\begin{equation}\label{Eq_Kac}
F_{\mathbbm{1}}\big(\eup^{-\La} \ch V(\La)\big)=\prod_{\alpha\in\Delta_+^{\vee}}
\bigg(\frac{1-q^{\ip{\La+\rho}{\alpha}}}
{1-q^{\ip{\rho}{\alpha}}}\bigg)^{\mult(\alpha)},
\end{equation}
where $\Delta_+^{\vee}$ is the set of positive coroots.
This result, which is valid for all types $\mathrm{X}_N^{(r)}$, 
can be rewritten in terms of theta functions.
Assuming $\C_n^{(1)}$ and setting
\begin{equation}\label{Eq_Lambda-parametrization}
\La=(\la_0-\la_1)\La_0+(\la_1-\la_2)\La_1+\cdots+(\la_{n-1}-\la_n)\La_{n-1}+\la_n\La_n,
\end{equation}
for $\la=(\la_0,\la_1,\dots,\la_n)$ a partition, this rewriting takes the form
\begin{multline}\label{Eq_Kac2}
F_{\mathbbm{1}}\big(
\eup^{-\La} \ch V(\La)\big)
=\frac{(q^2;q^2)_{\infty}(q^{\kappa/2};q^{\kappa/2})_{\infty}
(q^{\kappa};q^{\kappa})_{\infty}^{n-1}} 
{(q;q)_{\infty}^{n+1}}  \\ \times
\prod_{i=1}^n  \theta\big(q^{\la_i+n-i+1};q^{\kappa/2}\big) 
\prod_{1\leq i<j\leq n} 
\theta\big(q^{\la_i-\la_j-i+j},q^{\la_i+\la_j+2n+2-i-j};q^{\kappa}\big),
\end{multline}
where $\kappa=2n+2\la_0+2$.

The earlier product form now arises by recognizing (see e.g.,
\cite[Lemma 2.1]{BW13}) the right-hand side of \eqref{Eq_CnmLa0} as
\begin{equation}\label{Eq_char}
\eup^{-m\La_0} \ch V(m\La_0)
\end{equation}
upon the identification
\[
q=\eup^{-\alpha_0-2\alpha_1-\cdots-2\alpha_{n-1}-\alpha_n}
\quad\text{and}\quad
x_i=\eup^{-\alpha_i-\cdots-\alpha_{n-1}-\alpha_n/2}\;\;
(1\leq i\leq n).
\]
Indeed, the equality between the left-hand side of \eqref{Eq_CnmLa0}
and \eqref{Eq_char} is exactly the first part of the
previously mentioned \cite[Theorem 1.1]{BW13}.
Since \eqref{Eq_subs} corresponds exactly
to the principal specialization \eqref{Eq_PS},
it follows from \eqref{Eq_Kac2} with $\la=(m,0^n)$, that
\begin{multline*}
F_{\mathbbm{1}}\big(\eup^{-m\La_0} \ch V(m\La_0)\big) 
=
\frac{(q^2;q^2)_{\infty}(q^{\kappa/2};q^{\kappa/2})_{\infty}
(q^{\kappa};q^{\kappa})_{\infty}^{n-1}} 
{(q;q)_{\infty}^{n+1}}   \\ \times
\prod_{i=1}^n  \theta\big(q^{n-i+1};q^{\kappa/2}\big) 
\prod_{1\leq i<j\leq n} 
\theta\big(q^{j-i},q^{i+j};q^{\kappa}\big).
\end{multline*}
This representation-theoretic approach is not essentially different from 
our earlier $q$-series proof. The principal specialization formula 
\eqref{Eq_Kac2} itself is an immediate consequence of the $\D^{(2)}_{n+1}$ 
Macdonald identity, and if instead of the right-hand side of
\eqref{Eq_CnmLa0} we consider the more general
\begin{multline*}
\eup^{-\La} \ch V(\La)=
\frac{1}{(q)_{\infty}^n\prod_{i=1}^n \theta(x_i^2;q)
\prod_{1\leq i<j\leq n} \theta(x_i/x_j,x_ix_j;q)} \\ \times
\sum_{r\in\Z^n} 
\det_{1\leq i,j\leq n}
\Big( (x_i q^{r_i})^{j-\la_j-1}-(x_i q^{r_i})^{2n-j+\la_j+1}\Big)
\prod_{i=1}^n x_i^{\kappa r_i+\la_i-i+1} q^{\frac{1}{2}\kappa r_i^2-nr_i}
\end{multline*}
for $\kappa=2n+2\la_0+2$, then all of the steps carried out between 
\eqref{Eq_CnmLa0} and \eqref{Eq_Mcn} carry over to this more
general setting. The only notable changes are that
\eqref{Eq_begin} generalizes to
\[
\M=\det_{1\leq i,j\leq n} 
\bigg(\sum_{r\in\Z} \hat{x}_i^{\kappa r+\la_i-i+1}
q^{n\kappa r^2-2n^2r} \cdot
\Big( (\hat{x}_i q^{2nr})^{j-\la_j-1}-(\hat{x}_iq^{2nr})^{2n-j+\la_j+1}\Big)
\bigg),
\]
and that in \eqref{Eq_end} we have to redefine
$y_i$ as $q^{\kappa/2-\la_{n-i+1}-i}$, and  $a_{ij}$ as
$$j^2-i^2+(i-j)(\kappa+1)/2+(j-1/2)\la_{n-j+1}-(i-1/2)\la_{n-i+1}.$$

\subsection{Proof of Theorem~\ref{Thm_Main} \eqref{Eq_RR-A2n2a}}

Here we prove \eqref{Eq_RR-A2n2a} by making use of the $\B_n^{(1)}$
Macdonald identity.

\begin{proof}[Proof of Theorem~\ref{Thm_Main}\eqref{Eq_RR-A2n2a}]
Again we iterate \eqref{Eq_key}, but this time the variable $x_n$, remains
unpaired: 
\[
\lim_{y_1\to x_1^{-1}}\dots\lim_{y_{n-1}\to x_{n-1}^{-1}}
L^{(0)}_m(x_1,y_1,\dots,x_{n-1},y_{n-1},x_n)=L^{(n-1)}_m(x_1,\dots,x_n).
\]
Therefore, if we replace $x\mapsto(x_1,y_1,\dots,x_{n-1},y_{n-1},x_n)$ 
in \eqref{Eq_Cn-RS-m} (changing the rank from $n$ to $2n-1$) 
and take the $y_i\to x_i^{-1}$ limit for $1\leq i\leq n-1$, we obtain
\begin{align}\label{Eq_interm}
\sum_{\substack{\la \\[1pt] \la_1\leq m}} &
q^{\abs{\la}} P'_{2\la}\big(x_1^{\pm},\dots,x_{n-1}^{\pm},x_n;q\big) \\[-1mm]
&=\frac{1}{(q)_{\infty}^{n-1}(qx_n^2)_{\infty} 
\prod_{i=1}^{n-1}  (qx_i^{\pm}x_n,qx_i^{\pm 2})_{\infty}
\prod_{1\leq i<j\leq n-1} (qx_i^{\pm} x_j^{\pm})_{\infty}} \notag \\[1mm] 
&\qquad \times
\sum_{r\in\Z^n} \frac{\Delta_{\C}(x q^r)}{\Delta_{\C}(x)}\,
\prod_{i=1}^n \bigg({-}\frac{x_i^{\kappa}}{x_n}\bigg)^{r_i} 
q^{\frac{1}{2}\kappa r_i^2-\frac{1}{2}(2n-1)r_i} 
\frac{(x_i x_n)_{r_i}}{(qx_i/x_n)_{r_i}}, \notag
\end{align}
where $\kappa=2m+2n+1$, 
$(ax_i^{\pm})_{\infty}:=(ax_i)_{\infty}(ax_i^{-1})_{\infty}$ and
\[
(ax_i^{\pm}x_j^{\pm})_{\infty}:=
(ax_ix_j)_{\infty}(ax_i^{-1}x_j)_{\infty}
(ax_ix_j^{-1})_{\infty}(ax_i^{-1}x_j^{-1})_{\infty}.
\]
Recalling the comment immediately after \eqref{Eq_Def-Lp},
the summand of \eqref{Eq_interm} vanishes unless $r_n\geq 0$.

Let $\hat{x}:=(-x_1,\dots,-x_{n-1},-1)$ and
\begin{equation}\label{Eq_phi}
\phi_r=\begin{cases} 1 & \text{if $r=0$} \\ 2 & \text{if $r=1,2,\dots$.}
\end{cases}
\end{equation}
Letting $x_n$ tend to $1$ in \eqref{Eq_interm}, and using
\[
\lim_{x_n\to 1} 
\frac{\Delta_{\C}(x q^r)}{\Delta_{\C}(x)}\,
\prod_{i=1}^n \frac{(x_ix_n)_{r_i}}{(qx_i/x_n)_{r_i}}
=\phi_{r_n} 
\frac{\Delta_{\B}(\hat{x} q^r)}{\Delta_{\B}(\hat{x})},
\]
we find that
\begin{align*}
\sum_{\substack{\la \\[1pt] \la_1\leq m}} &
q^{\abs{\la}} P'_{2\la}\big(x_1^{\pm},\dots,x_{n-1}^{\pm},1;q\big) \\[-1mm] 
&=\frac{1}{(q)_{\infty}^n 
\prod_{i=1}^{n-1}  (qx_i^{\pm},qx_i^{\pm 2})_{\infty}
\prod_{1\leq i<j\leq n-1} (qx_i^{\pm} x_j^{\pm})_{\infty}} \\[1mm]
\qquad &\ \ \ \ \ \ \ \times
\sum_{r_1,\dots,r_{n-1}=-\infty}^{\infty} \sum_{r_n=0}^{\infty}
\phi_{r_n} \frac{\Delta_{\B}(\hat{x} q^r)}{\Delta_{\B}(\hat{x})}\,
\prod_{i=1}^n \hat{x}_i^{\kappa r_i} 
q^{\frac{1}{2}\kappa r_i^2-\frac{1}{2}(2n-1)r_i}.
\end{align*}
It is easily checked that the summand on the right (without the factor
$\phi_{r_n}$) is invariant under the variable change
$r_n\mapsto -r_n$. Using the elementary relations
\begin{equation}\label{Eq_simp}
\theta(-1;q)=2(-q)_{\infty}^2,\quad
(-q)_{\infty}(q;q^2)_{\infty}=1,\quad
\theta(z,-z;q)\theta(qz^2;q^2)=\theta(z^2),
\end{equation}
we can then simplify the above to obtain
\begin{align}\label{Eq_a2n2}
\sum_{\substack{\la \\[1pt] \la_1\leq m}} &
q^{\abs{\la}} P'_{2\la}\big(x_1^{\pm},\dots,x_{n-1}^{\pm},1;q\big) \\[-1mm] 
&=\frac{1}{(q)_{\infty}^n 
\prod_{i=1}^n \theta(\hat{x}_i;q)\theta(q\hat{x}_i^2;q^2)
\prod_{1\leq i<j\leq n} 
\theta(\hat{x}_i/\hat{x}_j,\hat{x}_i\hat{x}_j;q)} \notag \\[1mm] 
& \qquad \ \ \ \ \ \times
\sum_{r\in\Z^n} \Delta_{\B}(\hat{x} q^r)\,
\prod_{i=1}^n \hat{x}_i^{\kappa r_i-i+1} 
q^{\frac{1}{2}\kappa r_i^2-\frac{1}{2}(2n-1)r_i}. \notag 
\end{align}

The remainder of the proof is similar to that of \eqref{Eq_RR-Cn}.
We make the simultaneous substitutions
\begin{equation}\label{Eq_A2n2-spec}
q\mapsto q^{2n-1},\qquad x_i\mapsto q^{n-i}\;\; (1\leq i\leq n),
\end{equation}
so that from here on $\hat{x}_i:=-q^{n-i}$. 
By the identity
\begin{multline*}
(q^{2n-1};q^{2n-1})_{\infty}^n\prod_{i=1}^n 
\theta(-q^{n-i};q^{2n-1})\theta(q^{2n-2i+1};q^{4n-2}) \\ \times
\prod_{1\leq i<j\leq n} \theta(q^{j-i},q^{2n-i-j};q^{2n-1})=
2(q)_{\infty}^n
\end{multline*}
and \eqref{Eq_PpP}, we find that  
\[
\sum_{\substack{\la \\[1pt] \la_1\leq m}}
q^{\abs{\la}} P_{2\la}\big(1,q,q^2,\dots;q^{2n-1}\big) =
\frac{\M}{2(q)_{\infty}^n}, 
\] 
where we have that
\[
\M:=\sum_{r\in\Z^n} \Delta_{\B}\big(\hat{x} q^{(2n-1)r}\big)\,
\prod_{i=1}^n \hat{x}_i^{\kappa r_i-i+1}
q^{\frac{1}{2}(2n-1)\kappa r_i^2-\frac{1}{2}(2n-1)^2r_i}.
\]
By \eqref{Eq_Bn-VdM} and multilinearity, $\M$ can be rewritten in the form
\[
\M=\det_{1\leq i,j\leq n} 
\bigg(\sum_{r\in\Z} \hat{x}_i^{\kappa r-i+1}
q^{\frac{1}{2}(2n-1)\kappa r^2-\frac{1}{2}(2n-1)^2r} \cdot
\Big( \big(\hat{x}_i q^{(2n-1)r}\big)^{j-1}-
\big(\hat{x}_iq^{(2n-1)r}\big)^{2n-j}\Big)\bigg).
\]
Following the same steps that led from \eqref{Eq_begin} to \eqref{Eq_end},
we obtain
\begin{multline}\label{Eq_same}
\M=\det_{1\leq i,j\leq n}\bigg(
(-1)^{i-j} q^{b_{ij}} \sum_{r\in\Z} (-1)^r 
y_i^{(2n-1)r-i+1} q^{(2n-1)\kappa\binom{r}{2}}  \\ \times
\Big((y_iq^{\kappa r})^{j-1}-(y_i q^{\kappa r})^{2n-j}\Big)\bigg),
\end{multline}
where 
\begin{equation}\label{Eq_yb}
y_i=q^{\frac{1}{2}(\kappa+1)-i} \quad\text{and}\quad 
b_{ij}:=j^2-i^2+\frac{1}{2}(i-j)(\kappa+3).
\end{equation} 
Again, the factor $(-1)^{i-j} q^{b_{ij}}$ does not contribute, and so
\eqref{Eq_Bn-VdM} then gives
\[
\M=\sum_{r\in\Z^n} 
\Delta_{\B}(y_iq^{\kappa r})
\prod_{i=1}^n (-1)^{r_i}  
y_i^{(2n-1)r_i-i+1} q^{(2n-1)\kappa\binom{r_i}{2}}.  
\]
To complete the proof, we apply the following variant of the 
$\B_n^{(1)}$ Macdonald identity\footnote{The actual
$\B_n^{(1)}$ Macdonald identity has the restriction $\abs{r}\equiv 0\pmod{2}$
in the sum over $r\in\Z^n$, which eliminates the factor $2$
on the right. To prove the form used here it suffices to take the
$a_1,\dots,a_{2n-1}\to 0$ and $a_{2n}\to -1$ limit in
Gustafson's multiple $_6\psi_6$ summation for the affine root system 
$\A_{2n-1}^{(2)}$, see \cite{Gustafson89}.}
\begin{multline}\label{Eq_Gustafson}
\sum_{r\in\Z^n}
\Delta_{\B}(xq^r) \prod_{i=1}^n (-1)^{r_i} 
x_i^{(2n-1)r_i-i+1} q^{(2n-1)\binom{r_i}{2}} \\
=2(q)_{\infty}^n 
\prod_{i=1}^n \theta(x_i;q)
\prod_{1\leq i<j\leq n} \theta(x_i/x_j,x_ix_j;q),
\end{multline}
with $(q,x)\mapsto (q^{\kappa},y)$.
\end{proof}

\medskip

Identity \eqref{Eq_RR-A2n2a} can be understood representation-theoretically, but
this time the relevant Kac--Moody algebra is $\A_{2n}^{(2)}$.
According to \cite[Lemma 2.3]{BW13}
the right-hand side of \eqref{Eq_a2n2}, with $\hat{x}$ interpreted 
(not
as $\hat{x}=(-x_1,\dots,-x_{n-1},-1)$) as
\[
\hat{x}_i=\eup^{-\alpha_0-\cdots-\alpha_{n-i}}\;\;(1\leq i\leq n)
\]
and $q$ as
\begin{equation}\label{Eq_null}
q=\eup^{-2\alpha_0-\cdots-2\alpha_{n-1}-\alpha_n},
\end{equation}
is the $\A_{2n}^{(2)}$ character 
\[
\eup^{-m\La_n} \ch V(m\La_n).
\]
The substitution \eqref{Eq_A2n2-spec} corresponds to 
\begin{equation}\label{Eq_specA}
\eup^{-\alpha_0}\mapsto -1 
\quad\text{and}\quad
\eup^{-\alpha_i}\mapsto q\;\;(1\leq i\leq n).
\end{equation}
Denoting this by $F$, it is not hard to derive the general specialization 
formula
\begin{equation}\label{Eq_not-principal}
F\big(\eup^{-\La} \ch V(\La)\big) 
=\frac{(q^{\kappa};q^{\kappa})_{\infty}^n}{(q)_{\infty}^n} 
\prod_{i=1}^n  \theta\big(q^{\la_i+n-i+1};q^{\kappa}\big) 
\prod_{1\leq i<j\leq n} 
\theta\big(q^{\la_i-\la_j-i+j},q^{\la_i+\la_j-i-j+2n+2};q^{\kappa}\big),
\end{equation}
where $\La$ is again parametrized as in 
\eqref{Eq_Lambda-parametrization}, $\la_0-\la_1$ is even\footnote{For 
$\la_0-\la_1$ odd, $F\big(\eup^{-\La} \ch V(\La)\big)=0$.}, 
and $\kappa=2n+\la_0+\la_1+1$.
For $\la=(m^{n+1})$ (so that $\La=m\La_n$) this yields the right-hand side
of \eqref{Eq_RR-A2n2a}.

\subsection{Proof of 
Theorem~\ref{Thm_Main} \eqref{Eq_RR-A2n2b}}

Here we prove the companion result to \eqref{Eq_RR-A2n2a}.

\begin{proof}[Proof of Theorem~\ref{Thm_Main} \eqref{Eq_RR-A2n2b}]
In \eqref{Eq_interm} we set $x_n=q^{1/2}$ so that
\begin{align*}
\sum_{\substack{\la \\[1pt] \la_1\leq m}} &
q^{\abs{\la}} P'_{2\la}\big(x_1^{\pm},\dots,x_{n-1}^{\pm},q^{1/2};q\big) \\[-1mm]
&=\frac{1}{(q)_{\infty}^{n-1}(q^2)_{\infty} 
\prod_{i=1}^{n-1}  (q^{3/2}x_i^{\pm},qx_i^{\pm 2})_{\infty}
\prod_{1\leq i<j\leq n-1} (qx_i^{\pm} x_j^{\pm})_{\infty}} \\[1mm] 
&\qquad \times
\sum_{r_1,\dots,r_{n-1}=-\infty}^{\infty} \sum_{r_n=0}^{\infty}
\frac{\Delta_{\C}(\hat{x} q^r)}{\Delta_{\C}(\hat{x})}\,
\prod_{i=1}^n (-1)^{r_i} \hat{x}_i^{\kappa r_i}
q^{\frac{1}{2}\kappa r_i^2-nr_i},
\end{align*}
where $\kappa=2m+2n+1$ and $\hat{x}=(x_1,\dots,x_{n-1},q^{1/2})$.
The $r_n$-dependent part of the summand is
\[
(-1)^{r_n} q^{\kappa\binom{r_n+1}{2}-nr_n}
\frac{1-q^{2r_n+1}}{1-q}\prod_{i=1}^{n-1}
\frac{x_iq^{r_i}-q^{r_n+1/2}}{x_i-q^{1/2}}\cdot
\frac{x_iq^{r_n+r_i+1/2}-1}{x_iq^{1/2}-1},
\]
which is readily checked to be invariant under the substitution
$r_n\mapsto -r_n-1$.
Hence
\begin{align*}
\sum_{\substack{\la \\[1pt] \la_1\leq m}} & 
q^{\abs{\la}} P'_{2\la}\big(x_1^{\pm},\dots,x_{n-1}^{\pm},q^{1/2};q\big) \\[-1mm]
&=\frac{1}{2(q)_{\infty}^n
\prod_{i=1}^{n-1} (-1) \theta(q^{1/2}x_i,x_i^2;q) 
\prod_{1\leq i<j\leq n-1} \theta(x_i/x_j,x_ix_j;q)} \\[1mm] 
&\qquad \times
\sum_{r\in\Z^n} 
\Delta_{\C}(\hat{x} q^r)
\prod_{i=1}^n (-1)^{r_i} \hat{x}_i^{\kappa r_i-i}
q^{\frac{1}{2}\kappa r_i^2-nr_i+\frac{1}{2}}.
\end{align*}
Our next step is to replace $x_i\mapsto x_{n-i+1}$ and
$r_i\mapsto r_{n-i+1}$. By $\theta(x;q)=-x\theta(x^{-1};q)$ and
\eqref{Eq_simp}, this leads to
\begin{align}\label{Eq_a2n2b}
\sum_{\substack{\la \\[1pt] \la_1\leq m}} & 
q^{\abs{\la}} P'_{2\la}\big(q^{1/2},x_2^{\pm},\dots,x_n^{\pm};q\big) \\[-1mm]
&=\frac{1}{(q)_{\infty}^n \prod_{i=1}^n  \theta(-q^{1/2}\hat{x}_i;q) 
\theta(\hat{x}_i^2;q^2) 
\prod_{1\leq i<j\leq n} \theta(\hat{x}_i/\hat{x}_j,\hat{x}_i\hat{x}_j;q)} 
\notag \\[1mm] 
&\qquad \times
\sum_{r\in\Z^n} 
\Delta_{\C}(\hat{x} q^r)
\prod_{i=1}^n (-1)^{r_i} \hat{x}_i^{\kappa r_i-i+1}
q^{\frac{1}{2}\kappa r_i^2-nr_i}, \notag 
\end{align}
where now $\hat{x}=(q^{1/2},x_2,\dots,x_n)$.
Again we are at the point where we can specialize, letting
\begin{equation}\label{Eq_A2n2b-spec}
q\mapsto q^{2n-1},\qquad x_i\mapsto q^{n-i+1/2}=:\hat{x_i}\;\; (1\leq i\leq n).
\end{equation}
This is consistent, since $x_1=q^{1/2}\mapsto q^{n-1/2}$. By
the identity 
\begin{multline*}
(q^{2n-1};q^{2n-1})_{\infty}^n 
\prod_{i=1}^n \theta(-q^{2n-i};q^{2n-1}) 
\theta(q^{2n-2i+1};q^{4n-2})  \\ \times
\prod_{1\leq i<j\leq n} \theta(q^{j-i},q^{2n-i-j+1};q^{2n-1})
=2(q)_{\infty}^n,
\end{multline*}
we obtain
\[
\sum_{\substack{\la \\[1pt] \la_1\leq m}} 
q^{2\abs{\la}} P_{2\la}\big(1,q,q^2,\dots;q^{2n-1}\big) \\
=\frac{\M}{2(q)_{\infty}^n},
\]
where
\[
\M:=\sum_{r\in\Z^n} \Delta_{\C}(\hat{x} q^{(2n-1)r})
\prod_{i=1}^n (-1)^{r_i} \hat{x}_i^{\kappa r_i-i+1}
q^{\frac{1}{2}(2n-1)\kappa r_i^2-(2n-1)nr_i}.
\]
Expressing $\M$ in determinantal form using
\eqref{Eq_detC} yields
\begin{multline*}
\M=\det_{1\leq i,j\leq n} \bigg(\sum_{r\in\Z} (-1)^r \hat{x}_i^{\kappa r-i+1}
q^{\frac{1}{2}(2n-1)\kappa r^2-(2n-1)nr} \\ \times
\Big( (\hat{x}_i q^{(2n-1)r})^{j-1}-(\hat{x}_iq^{(2n-1)r})^{2n-j+1}\Big)\bigg).
\end{multline*}
We now replace $(i,j)\mapsto (j,i)$ and, viewing the resulting determinant 
as of the form $\det\big(\sum_r u_{ij;r}-\sum_r v_{ij;r}\big)$, we change 
the summation index $r\mapsto -r$ in the sum over $u_{ij;r}$. 
The expression for $\M$ we obtain is exactly \eqref{Eq_same} 
except that $(-1)^{i-j} q^{b_{ij}}$ is replaced by $q^{c_{ij}}$
and $y_i$ is given by $q^{n-i+1}$ instead of $q^{(\kappa+1)/2-i}$.
Following the previous proof results in \eqref{Eq_RR-A2n2b}. 
\end{proof}

\medskip

To interpret \eqref{Eq_RR-A2n2b} in terms of $\A_{2n}^{(2)}$,
we note that by \cite[Lemma 2.2]{BW13} the right-hand side of 
\eqref{Eq_a2n2b} in which $\hat{x}$ is interpreted as 
\[
\hat{x}_i=-q^{1/2} \eup^{\alpha_0+\cdots+\alpha_{i-1}}\;\;(1\leq i\leq n)
\]
(and $q$ again as \eqref{Eq_null}) corresponds to the $\A_{2n}^{(2)}$ 
character 
\[
\eup^{-2m\La_0} \ch V(2m\La_0).
\]
The specialization \eqref{Eq_A2n2b-spec} is then again consistent
with \eqref{Eq_specA}. From \eqref{Eq_not-principal} with $\la=(2m,0^n)$,
the first product-form on the right of \eqref{Eq_RR-A2n2b} 
immediately follows.
By level-rank duality, we can also identify \eqref{Eq_RR-A2n2b} 
as a specialization of the $\A_{2m}^{(2)}$ character 
$\eup^{-2n\La_0} \ch V(2n\La_0)$.

\subsection{Proof of Theorem~\ref{Thm_Main3}}

This proof, which uses the $\D_n^{(1)}$ Macdonald identity,
is the most complicated of the four.

\begin{proof}[Proof of Theorem~\ref{Thm_Main3}]
Once again we iterate \eqref{Eq_key}, but now both
$x_{n-1}$ and $x_n$ remain unpaired:
\[
\lim_{y_1\to x_1^{-1}}\dots \lim_{y_{n-2}\to x_{n-2}^{-1}}
L^{(0)}_m(x_1,y_1,\dots,x_{n-2},y_{n-2},x_{n-1},x_n) 
=L^{(n-2)}_m(x_1,\dots,x_n).
\]
Accordingly, if we replace 
$x\mapsto(x_1,y_1,\dots,x_{n-2},y_{n-2},x_{n-1},x_n)$ 
in \eqref{Eq_Cn-RS-m} (thereby changing the rank from $n$ to $2n-2$)
and take the $y_i\to x_i^{-1}$ limit,
for $1\leq i\leq n-2$, we obtain
\begin{align*}
&\sum_{\substack{\la \\[1pt] \la_1\leq m}} q^{\abs{\la}} 
P'_{2\la}\big(x_1^{\pm},\dots,x_{n-2}^{\pm},x_{n-1},x_n;q\big) \\[-1mm]
&\quad=\frac{1}{(q)_{\infty}^{n-2}(qx_{n-1}^2,qx_{n-1}x_n,qx_n^2)_{\infty}} \\[1mm]
& \qquad \times 
\frac{1}{\prod_{i=1}^{n-2} 
(qx_i^{\pm 2},qx_i^{\pm}x_{n-1},qx_i^{\pm}x_n)_{\infty}
\prod_{1\leq i<j\leq n-2} (qx_i^{\pm} x_j^{\pm})_{\infty}} \\[1mm] 
&\qquad \times
\sum_{r\in\Z^n} \frac{\Delta_{\C}(x q^r)}{\Delta_{\C}(x)}\,
\prod_{i=1}^n \bigg(\frac{x_i^{\kappa}}{x_{n-1}x_n}\bigg)^{r_i}
q^{\frac{1}{2}\kappa r_i^2-(n-1)r_i}
\frac{(x_i x_{n-1},x_i x_n)_{r_i}}{(qx_i/x_{n-1},qx_i/x_n)_{r_i}},
\end{align*}
where $\kappa=2m+2n$. It is important to note that the summand vanishes unless
$r_{n-1}$ and $r_n$ are both nonnegative.
Next we let $(x_{n-1},x_n)$ tend to $(q^{1/2},1)$ using
\[
\lim_{(x_{n-1},x_n)\to (q^{1/2},1)} 
\frac{\Delta_{\C}(x q^r)}{\Delta_{\C}(x)}\,
\prod_{i=1}^n 
\frac{(x_i x_{n-1},x_i x_n)_{r_i}}{(qx_i/x_{n-1},qx_i/x_n)_{r_i}}
=\phi_{r_n} 
\frac{\Delta_{\B}(\hat{x} q^r)}{\Delta_{\B}(\hat{x})},
\]
with $\phi_r$ as in \eqref{Eq_phi} and 
$\hat{x}:=(-x_1,\dots,-x_{n-2},-q^{1/2},-1)$.
Hence we find that
\begin{align*}
&\sum_{\substack{\la \\[1pt] \la_1\leq m}} 
q^{\abs{\la}} 
P'_{2\la}\big(x_1^{\pm},\dots,x_{n-2}^{\pm},q^{1/2},1;q\big) \\[-1mm]
&\quad=\frac{1}{(q)_{\infty}^{n-1}(q^{3/2};q^{1/2})_{\infty} 
\prod_{i=1}^{n-2}  (qx_i^{\pm};q^{1/2})_{\infty}
(qx_i^{\pm 2})_{\infty}
\prod_{1\leq i<j\leq n-2} (qx_i^{\pm} x_j^{\pm})_{\infty}} \\[1mm] 
&\qquad \times
\sum_{r_1,\dots,r_{n-2}=-\infty}^{\infty}
\sum_{r_{n-1},r_n=0}^{\infty} \phi_{r_n}
\frac{\Delta_{\B}(\hat{x} q^r)}{\Delta_{\B}(\hat{x})}\,
\prod_{i=1}^n \hat{x}_i^{\kappa r_i}
q^{\frac{1}{2}\kappa r_i^2-\frac{1}{2}(2n-1)r_i}.
\end{align*}
Since the summand (without the factor $\phi_{r_n}$)
is invariant under the variable change
$r_n\mapsto -r_n$, as well as the change 
$r_{n-1}\mapsto -r_{n-1}-1$, we can rewrite this as
\begin{align*}
\sum_{\substack{\la \\[1pt] \la_1\leq m}} &
q^{\abs{\la}} 
P'_{2\la}\big(x_1^{\pm},\dots,x_{n-2}^{\pm},q^{1/2},1;q\big) \\
&=\frac{1}{(q)_{\infty}^{n-1}(q^{1/2};q^{1/2})_{\infty}
\prod_{i=1}^n \theta(\hat{x}_i;q^{1/2})
\prod_{1\leq i<j\leq n} \theta(\hat{x}_i/\hat{x}_j,\hat{x}_i\hat{x}_j)} 
\notag \\[1mm] 
&\qquad \times
\sum_{r\in\Z^n}
\Delta_{\B}(\hat{x} q^r) \prod_{i=1}^n \hat{x}_i^{\kappa r_i-i+1}
q^{\frac{1}{2}\kappa r_i^2-\frac{1}{2}(2n-1)r_i}, \notag 
\end{align*}
where, once again, we have used \eqref{Eq_simp} to clean up the
infinite products.
Before we can carry out the usual specialization, we need to
relabel $x_1,\dots,x_{n-2}$ as $x_2,\dots,x_{n-1}$ and, 
accordingly, we redefine $\hat{x}$ as $(-q^{1/2},-x_2,\dots,-x_{n-1},-1)$. 
For $n\geq 2$, we then find that
\begin{align}\label{Eq_dn}
\sum_{\substack{\la \\[1pt] \la_1\leq m}} & q^{\abs{\la}} 
P'_{2\la}\big(q^{1/2},x_2^{\pm},\dots,x_{n-1}^{\pm},1;q\big) \\
&=\frac{1}{(q)_{\infty}^{n-1}(q^{1/2};q^{1/2})_{\infty}
\prod_{i=1}^n \theta(\hat{x}_i;q^{1/2})
\prod_{1\leq i<j\leq n} \theta(\hat{x}_i/\hat{x}_j,\hat{x}_i\hat{x}_j)} 
\notag \\[1mm] 
&\qquad \times
\sum_{r\in\Z^n}
\Delta_{\B}(\hat{x} q^r) \prod_{i=1}^n \hat{x}_i^{\kappa r_i-i+1}
q^{\frac{1}{2}\kappa r_i^2-\frac{1}{2}(2n-1)r_i}. \notag 
\end{align}
We are now ready to make the substitutions
\begin{equation}\label{Eq_Dn-spec}
q\mapsto q^{2n-2},\qquad x_i\mapsto q^{n-i}\;\; (2\leq i\leq n-1),
\end{equation}
so that $\hat{x}_i:=-q^{n-i}$ for $1\leq i\leq n$.
By the identity 
\begin{multline*}
(q^{2n-2};q^{2n-2})_{\infty}^{n-1}(q^{n-1};q^{n-1})_{\infty}\prod_{i=1}^n 
\theta(-q^{n-i};q^{n-1}) \\ \times
\prod_{1\leq i<j\leq n} \theta(q^{j-i},q^{2n-i-j};q^{2n-2})=
4(q^2;q^2)_{\infty}(q)_{\infty}^{n-1}
\end{multline*}
and \eqref{Eq_PpP}, we obtain
\[
\sum_{\substack{\la \\[1pt] \la_1\leq m}}
q^{2\abs{\la}} P_{2\la}\big(1,q,q^2,\dots;q^{2n-3}\big) =
\frac{\M}{4(q^2;q^2)_{\infty}(q)_{\infty}^{n-1}},
\] 
where $\M$ is given by
\[
\M:=\sum_{r\in\Z^n}
\Delta_{\B}(\hat{x} q^{2(n-1)r}) \prod_{i=1}^n \hat{x}_i^{\kappa r_i-i+1}
q^{(n-1)\kappa r_i^2-(n-1)(2n-1)r_i}.
\]
By the $\B_n$ determinant \eqref{Eq_Bn-VdM}, we find that
\[
\M=\det_{1\leq i,j\leq n} \bigg(\sum_{r\in\Z} \hat{x}_i^{\kappa r-i+1}
q^{(n-1)\kappa r^2-(n-1)(2n-1)r} \cdot
\Big( \big(\hat{x}_i q^{2(n-1)r}\big)^{j-1}-
\big(\hat{x}_iq^{2(n-1)r}\big)^{2n-j}\Big)\bigg).
\]
By the same substitutions that transformed \eqref{Eq_begin} into
\eqref{Eq_end}, we obtain
\[
\M=\det_{1\leq i,j\leq n} \bigg((-1)^{i-j} q^{b_{ij}} 
\sum_{r\in\Z} y_i^{2(n-1) r-i+1} q^{2(n-1)\kappa \binom{r}{2}} 
\cdot \Big(\big(y_iq^{\kappa r}\big)^{j-1}+
\big(y_i q^{\kappa r}\big)^{2n-j-1}\Big)\bigg),
\]
where $y_i$ and $b_{ij}$ are as in \eqref{Eq_yb}.
Recalling the Weyl denominator formula for $\D_n$ \cite{Krattenthaler99}
\[
\frac{1}{2} \det_{1\leq i,j\leq n} \big(x_i^{j-1}+x_i^{2n-j-1}\big)=
\prod_{1\leq i<j\leq n} (x_i-x_j)(x_ix_j-1)=:\Delta_{\D}(x)
\]
we can rewrite $\M$ in the form
\[
\M=2\sum_{r\in\Z^n} \Delta_{\D}(xq^r)
\prod_{i=1}^n y_i^{2(n-1) r_i-i+1} q^{2(n-1)\kappa \binom{r_i}{2}}.
\]
Taking the $a_1,\dots,a_{2n-2}\to 0$, $a_{2n-1}\to 1$ and $a_{2n}\to -1$ 
limit in Gustafson's multiple $_6\psi_6$ summation for the affine
root system $\A_{2n-1}^{(2)}$ \cite{Gustafson89} leads to
the following variant of the $\D_n^{(1)}$ Macdonald 
identity\footnote{As in the $\B_n^{(1)}$ case, the actual $\D_n^{(1)}$ 
Macdonald identity contains the restriction 
$\abs{r}\equiv 0\pmod{2}$ on the sum over $r$.}
\[
\sum_{r\in\Z^n} \Delta_{\D}(xq^r) \prod_{i=1}^n 
x_i^{2(n-1)r_i-i+1} q^{2(n-1)\binom{r_i}{2}}
=2(q)_{\infty}^n 
\prod_{1\leq i<j\leq n} \theta(x_i/x_j,x_ix_j;q).
\]
This implies the claimed product form for $\M$ and completes our 
proof.
\end{proof}

\medskip

Identity \eqref{Eq_RR-Dn} has a  representation-theoretic interpretation.
By \cite[Lemma 2.4]{BW13},
the right-hand side of \eqref{Eq_dn} in which $\hat{x}$ is interpreted as
\[
\hat{x}_i=\eup^{-\alpha_i-\cdots-\alpha_n}\;\;(1\leq i\leq n)
\]
and $q$ as
\[
q=\eup^{-2\alpha_0-\cdots-2\alpha_n}
\]
yields the $\D_{n+1}^{(2)}$ character 
\[
\eup^{-2m\La_0} \ch V(2m\La_0).
\]
The specialization \eqref{Eq_Dn-spec} then corresponds to
\[
\eup^{-\alpha_0},\,\eup^{-\alpha_n}\mapsto -1 
\quad\text{and}\quad
\eup^{-\alpha_i}\mapsto q\;\;(2\leq i\leq n-1).
\]
Denoting this by $F$, we have
\[
F\big(\eup^{-\La} \ch V(\La)\big)
=\frac{(q^{\kappa};q^{\kappa})_{\infty}^n}
{(q^2;q^2)_{\infty}(q)_{\infty}^{n-1}}  
\prod_{1\leq i<j\leq n} 
\theta\big(q^{\la_i-\la_j-i+j},q^{\la_i+\la_j-i-j+2n+1};q^{\kappa}\big),
\]
where $\kappa=2n+2\la_0$ and
\[
\La=2(\la_0-\la_1)\La_0+(\la_1-\la_2)\La_1+\cdots+(\la_{n-1}-\la_n)\La_{n-1}+2\la_n\La_n,
\]
for $\la=(\la_0,\la_1,\dots,\la_n)$ a partition or half-partition 
(i.e., all $\la_i\in\Z+1/2$).
For $\la=(m,0^n)$ this agrees with \eqref{Eq_RR-Dn}.

\section{Proof of Theorem~\ref{Thm_Main4}}\label{Sec_Pf2}

For integers $k$ and $m$, where $0\leq k\leq m$, we denote the
the {\it nearly-rectangular} partition $(\underbrace{m,\dots,m}_{r \text{ times}},k)$ 
as $(m^r,k)$.
Using these partitions, we have the following ``limiting'' 
Rogers--Ramanujan-type
identities, which imply Theorem~\ref{Thm_Main4} when $k=0$ or $k=m$.

\begin{theorem}[$\A_{n-1}^{(1)}$ RR and AG identities]\label{Thm_Main5}
If $m$ and $n$ are positive integers and $0\leq k\leq m$,
then we have
\begin{multline}\label{Eq_limk}
\lim_{r\to\infty}
q^{-m\binom{r}{2}-kr} 
Q_{(m^r,k)}(1,q,q^2,\dots;q^n) \\
=\frac{(q^n;q^n)_{\infty} (q^{\kappa};q^{\kappa})_{\infty}^{n-1}}
{(q)_{\infty}^n}
\cdot \prod_{i=1}^{n-1} \theta(q^{i+k};q^{\kappa})
\cdot \prod_{1\leq i<j\leq n-1} \theta(q^{j-i};q^{\kappa}),
\end{multline}
where $\kappa=m+n$.
\end{theorem}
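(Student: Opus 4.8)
The plan is to mimic the proofs of Theorems~\ref{Thm_Main}--\ref{Thm_Main3}: produce a bilateral sum over the $\A_{n-1}$ root lattice and collapse it to a product of theta functions by a Macdonald identity, the relevant one now being that for $\A_{n-1}^{(1)}$ \cite{Macdonald72}. Because the rank is $n-1$, the natural first move is to pass to finitely many variables via \eqref{Eq_PpP}, rewriting the left-hand side of \eqref{Eq_limk} as a limit of the modified Hall--Littlewood polynomial $Q'_{(m^r,k)}(1,q,\dots,q^{n-1};q^n)$ in the $n$ variables $y_a=q^{a-1}$ with parameter $q^n$. These $n$ variables are precisely the residues $q^0,\dots,q^{n-1}$ through which the full progression $1,q,q^2,\dots$ decomposes into $n$ scaled copies of $1,q^n,q^{2n},\dots$, so this is the arithmetically natural chart for the problem.

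First I would record a Weyl-type symmetrisation formula for $P_{(m^r,k)}$ in a large finite number of variables, specialise to the geometric progression, and factor out the ground-state monomial $q^{m\binom{r}{2}+kr}$. This power is exactly the lowest occurring in $q$, realised by placing the $r$ parts equal to $m$ on the smallest available values; this is why the normalisation in \eqref{Eq_limk} is the correct one and why the limit exists as a formal power series in $q$.

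The heart of the argument, and the step I expect to be the main obstacle, is the passage $r\to\infty$. After the ground-state power is removed, the finite symmetric-group sum reorganises---according to how the parts are distributed among the $n$ residue classes---into an $n$-fold sum over integers $(r_1,\dots,r_n)\in\Z^n$ recording the per-class shifts. As $r\to\infty$ the one-sided summation ranges open up to all of $\Z$, and the bookkeeping on the number of parts turns into the affine constraint of the $\A_{n-1}^{(1)}$ Macdonald identity. Making this interchange of limit and summation rigorous, and verifying that the emergent quadratic exponent $\tfrac12\kappa r_i^2$, the linear terms and the signs all match, is exactly the content of the ``key lemma about limiting $q$-series'' alluded to in Section~\ref{Sec_Pf}; the principal difficulty is that the symmetrisation manufactures apparent poles $1/(y_i-y_j)$ which must be shown to cancel termwise before the limit can be taken.

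Finally I would identify the resulting bilateral sum as the sum side of the $\A_{n-1}^{(1)}$ Macdonald identity with $q\mapsto q^{\kappa}$, $\kappa=m+n$, under the specialisation $x_i=q^{i}$ for $1\le i\le n-1$ and $x_n=q^{-k}$. Its product side is
\[
(q^{\kappa};q^{\kappa})_{\infty}^{n-1}\prod_{1\le i<j\le n}\theta\big(x_i/x_j;q^{\kappa}\big),
\]
and under this specialisation---up to elementary monomial and sign factors absorbed by the ground-state normalisation, using $\theta(q^{\kappa-b};q^{\kappa})=\theta(q^{b};q^{\kappa})$ as in Section~\ref{Sec_Pf}---the pairs with $j\le n-1$ contribute $\prod_{1\le i<j\le n-1}\theta(q^{j-i};q^{\kappa})$ while the pairs involving $x_n$ contribute $\prod_{i=1}^{n-1}\theta(q^{i+k};q^{\kappa})$. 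Matching the remaining prefactors is then routine: $b_{(m^r,k)}(q^n)\to(q^n;q^n)_{\infty}$ supplies the factor $(q^n;q^n)_{\infty}$ in \eqref{Eq_limk} (the extra $(1-q^n)$ present when $0<k<m$ being reabsorbed), and the $1/(q)_{\infty}^n$ comes from the infinite products generated by the specialisation. Specialising $k=0$ (or $k=m$) and merging $\prod_{i=1}^{n-1}\theta(q^{i};q^{\kappa})$ with $\prod_{1\le i<j\le n-1}$ into $\prod_{1\le i<j\le n}\theta(q^{j-i};q^{\kappa})$ then recovers Theorem~\ref{Thm_Main4}.
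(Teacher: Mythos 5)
Your endgame is right: the $\A_{n-1}^{(1)}$ Macdonald identity with $q\mapsto q^{\kappa}$ at the shifted point $x_i=q^{i}$ $(1\leq i\leq n-1)$, $x_n=q^{-k}$ does yield exactly $\prod_{i=1}^{n-1}\theta(q^{i+k};q^{\kappa})\prod_{1\leq i<j\leq n-1}\theta(q^{j-i};q^{\kappa})$, your normalisation argument for $q^{-m\binom{r}{2}-kr}$ is sound, and the reduction of the $k=0,m$ cases to Theorem~\ref{Thm_Main4} is correct. But the entire technical content of the theorem is the passage from the Hall--Littlewood polynomial at $(m^r,k)$ to a bilateral sum over the $\A_{n-1}$ root lattice, and this is precisely the step you assert rather than prove. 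Worse, you pin it on the wrong tool: the ``key lemma'' \eqref{Eq_key} (\cite[Lemma A.1]{BW13}) governs the collapse $x_{p+1}\to x_p^{-1}$ of the $\C_n$ Rogers--Selberg function $L_m^{(p)}$ and is used only in the proofs of Theorems~\ref{Thm_Main}--\ref{Thm_Main3}; it says nothing about $r\to\infty$ limits and plays no role in Section~\ref{Sec_Pf2}. What the paper actually inputs is the closed near-rectangular formula \cite[Corollary 3.2]{BW13} for $Q'_{(m^r,k)}(x;q)$ in $n$ \emph{generic} variables: replacing $r$ by $nr$ and shifting $u_i\mapsto u_i+r$, $v_i\mapsto v_i+r$ centres the sum on $\{u,v\in\Z^n:\abs{u}=1,\,\abs{v}=0\}$, support constraints force $u=v+\epsilon_\ell$, and the sole remaining $r$-dependence sits in $(qx_i/x_j)_{r+v_i}$, so the limit is termwise immediate --- no pole cancellation, no interchange of limit and summation to justify. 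Your route instead invokes a Weyl-type symmetrisation formula for $P_{(m^r,k)}$, which requires at least $r+1$ variables; this clashes with your own first move, since after \eqref{Eq_PpP} you hold $Q'_{(m^r,k)}$ in exactly $n$ variables, and the modified polynomial is \emph{not} a $\Symm_n$-symmetrisation. You are thus committed to a growing alphabet and a genuinely double limit, which is exactly the ``main obstacle'' you flag and then leave unresolved.

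There is a second, related inaccuracy: for $0<k<m$ the $r\to\infty$ limit is \emph{not} the plain Macdonald (denominator) sum. The reduction $u=v+\epsilon_\ell$ produces the insertion $\sum_{\ell}(x_\ell q^{v_\ell})^k\prod_{i\neq\ell}\big(1-q^{v_i-v_\ell}x_i/x_\ell\big)^{-1}=s_{(k)}(xq^v)$, so the limit is the Weyl--Kac character $\eup^{-\La}\ch V(\La)$ with $\La=(m-k)\La_0+k\La_1$, and the paper obtains the product form from the principal specialisation formula \eqref{Eq_Kac} of Kac and Lepowsky rather than by quoting the Macdonald identity. Your direct identification with a denominator-type sum is valid only \emph{after} specialisation, where the numerator at the shifted weight $\La+\rho$ coincides with a denominator sum at $x_i=q^{i}$, $x_n=q^{-k}$; your per-residue-class reorganisation would have to manufacture exactly the $s_{(k)}(xq^v)$ structure for this to go through, and nothing in the proposal does so. In short: correct target, correct final specialisation, but the central mechanism --- supplied in the paper by \cite[Corollary 3.2]{BW13} together with the character identification --- is missing, and the lemma you appeal to cannot substitute for it.
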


\smallskip
\begin{remark}
A similar calculation when $k\geq m$ gives 
\begin{multline*}
\lim_{r\to\infty}
q^{-m\binom{r+1}{2}} 
Q_{(k,m^r)}(1,q,q^2,\dots;q^n) \\
=\qbin{k-m+n-1}{n-1}_q
\frac{(q^n;q^n)_{\infty} (q^{\kappa};q^{\kappa})_{\infty}^{n-1}}
{(q)_{\infty}^n}
\prod_{1\leq i<j\leq n} \theta(q^{j-i};q^{\kappa}).
\end{multline*}
\end{remark}

\begin{proof}[Proof of Theorem~\ref{Thm_Main5}]
It suffices to prove the identity for $0\leq k<m$, and below we assume that $k$
satisfies this inequality.

The following identity for modified Hall--Littlewood polynomials
indexed by near-rectangular partitions is a special case of 
\cite[Corollary 3.2]{BW13}:
\begin{multline*}
Q'_{(m^r,k)}(x;q)=(q)_r(q)_1
\sum_{\substack{u\in\NN^n\\ \abs{u}=r+1}}
\sum_{\substack{v\in\NN^n\\ \abs{v}=r}}
\prod_{i=1}^n x_i^{ku_i+(m-k)v_i} q^{k\binom{u_i}{2}+(m-k)\binom{v_i}{2}}  
\\ \times
\prod_{i,j=1}^n 
\frac{(qx_i/x_j)_{u_i-u_j}}{(qx_i/x_j)_{u_i-v_j}}\cdot
\frac{(qx_i/x_j)_{v_i-v_j}}{(qx_i/x_j)_{v_i}}.
\end{multline*}
It is enough to compute the limit on the left-hand side of \eqref{Eq_limk}
for $r$ a multiple of $n$. Hence we replace $r$ by $nr$ in the above
expression, and then shift $u_i\mapsto u_i+r$ and
$v_i\mapsto v_i+r$, for all $1\leq i\leq n$, to obtain
\begin{multline*}
Q'_{(m^{nr},k)}(x;q)=(x_1\cdots x_n)^{mr}
q^{mn\binom{r}{2}+kr} (q)_{nr}(q)_1 \\ 
\times
\sum_{\substack{u\in\Z^n\\ \abs{u}=1}}
\sum_{\substack{v\in\Z^n\\ \abs{v}=0}}
\prod_{i=1}^n x_i^{ku_i+(m-k)v_i} 
q^{k\binom{u_i}{2}+(m-k)\binom{v_i}{2}}  
\prod_{i,j=1}^n 
\frac{(qx_i/x_j)_{u_i-u_j}}{(qx_i/x_j)_{u_i-v_j}}\cdot
\frac{(qx_i/x_j)_{v_i-v_j}}{(qx_i/x_j)_{r+v_i}}.
\end{multline*}
Since the summand vanishes unless $u_i\geq v_i$ for all $i$ and
$\abs{u}=\abs{v}+1$, it follows that $u=v+\epsilon_{\ell}$, for
some $\ell=1,\dots,n$, where $(\epsilon_{\ell})_i=\delta_{\ell i}$.
Hence we find that
\begin{multline*}
Q'_{(m^{nr},k)}(x;q)=(x_1\cdots x_n)^{mr}
q^{mn\binom{r}{2}+kr} (q)_{nr} \\ 
\times \sum_{\substack{v\in\Z^n\\ \abs{v}=0}}
\prod_{i=1}^n x_i^{mv_i} q^{m\binom{v_i}{2}} 
\prod_{i,j=1}^n \frac{(qx_i/x_j)_{v_i-v_j}}{(qx_i/x_j)_{r+v_i}}
\sum_{\ell=1}^n \big(x_{\ell}q^{v_{\ell}}\big)^k 
\prod_{\substack{i=1 \\ i\neq k}}^n \frac{1}{1-q^{v_i-v_{\ell}} x_i/x_{\ell}}.
\end{multline*}
Next we use
\[
\prod_{i,j=1}^n (qx_i/x_j)_{v_i-v_j}
=\frac{\Delta(xq^v)}{\Delta(x)}\,
(-1)^{(n-1)\abs{v}} q^{-\binom{\abs{v}}{2}}
\prod_{i=1}^n x_i^{nv_i-\abs{v}} q^{n\binom{v_i}{2}+(i-1)v_i},
\]
where $\Delta(x):=\prod_{1\leq i<j\leq n} (1-x_i/x_j)$,
and
\[
\sum_{\ell=1}^n x_{\ell}^k 
\prod_{\substack{i=1 \\ i\neq k}}^n \frac{1}{1-x_i/x_{\ell}}=
\sum_{1\leq i_1\leq i_2\leq\cdots\leq i_k\leq n} x_{i_1}x_{i_2}\cdots x_{i_k}
=h_k(x)=s_{(k)}(x),
\]
where $h_k$ and $s_{\la}$ are the complete symmetric and Schur function,
respectively. Thus we have
\begin{multline*}
Q'_{(m^{nr},k)}(x;q)=(x_1\cdots x_n)^{mr} q^{mn\binom{r}{2}+kr}(q)_{nr} \\
\times 
\sum_{\substack{v\in\Z^n\\ \abs{v}=0}} s_{(k)}(xq^v)
\frac{\Delta(xq^v)}{\Delta(x)}
\prod_{i=1}^n x_i^{\kappa v_i}q^{\frac{1}{2}\kappa v_i^2+iv_i}
\prod_{i,j=1}^n \frac{1}{(qx_i/x_j)_{r+v_i}},
\end{multline*}
where $\kappa:=m+n$.
Note that the summand vanishes unless $v_i\geq -r$ for all $i$.
This implies the limit
\begin{multline*}
\lim_{r\to\infty}
q^{-mn\binom{r}{2}-kr} \frac{Q'_{(m^{nr},k)}(x;q)}
{(x_1\cdots x_n)^{mr}} \\
=\frac{1}{(q)_{\infty}^{n-1}\prod_{1\leq i<j\leq n} \theta(x_i/x_j;q)} 
\sum_{\substack{v\in\Z^n\\ \abs{v}=0}}
s_{(k)}(xq^v)
\Delta(xq^v) \prod_{i=1}^n x_i^{\kappa v_i}q^{\frac{1}{2}\kappa v_i^2+iv_i}.
\end{multline*}
The expression on the right is exactly the 
Weyl--Kac formula for the level-$m$ $\A_{n-1}^{(1)}$ character \cite{Kac90}
\[
\eup^{-\La} \ch V(\La) , \quad \La=(m-k)\La_0+k\La_1,
\]
provided we identify
\[
q=\eup^{-\alpha_0-\alpha_1-\cdots-\alpha_{n-1}}
\quad\text{and}\quad
x_i/x_{i+1}=\eup^{-\alpha_i}\;\; (1\leq i\leq n-1).
\]
Hence 
\[
\lim_{r\to\infty}
q^{-mn\binom{r}{2}-kr} \frac{Q'_{(m^{nr},k)}(x;q)}
{(x_1\cdots x_n)^{mr}}
=\eup^{-\Lambda} \ch V(\Lambda),
\]
with $\La$ as above. 
For $m=1$ and $k=0$ this was obtained in \cite{Kirillov00} by 
more elementary means.
The simultaneous substitutions $q\mapsto q^n$ and $x_i\mapsto q^{n-i}$
correspond to the principal specialization \eqref{Eq_PS}. 
From \eqref{Eq_Kac} we can then read off the product form claimed
in \eqref{Eq_limk}.
\end{proof}

\section{Siegel Functions}\label{SiegelFunctions}
The normalizations for the series $\Phi_*$ were chosen so that the resulting $q$-series are modular functions on the congruence subgroups
$\Gamma(N)$, where
\[
\Gamma(N):=\big\{ \big(\begin{smallmatrix} a&b\\c&d\end{smallmatrix}\big) 
\in \SL_2(\Z):\: a\equiv d\equiv 1\pmod N,\  b\equiv c\equiv 0\pmod N\big\}.
\]
These groups act on $\H$, the upper-half of the complex plane,
by $\gamma \tau:=\frac{a\tau+b}{c\tau+d}$, where 
$\gamma=\big(\begin{smallmatrix} a&b\\c&d\end{smallmatrix}\big)$.
If $f$ is a meromorphic function on $\H$ and $\gamma\in \SL_2(Z)$, then we 
define
\[
(f |_k \gamma )(\tau):=(c\tau+d)^{-k} f(\gamma \tau).
\]
Modular functions are meromorphic functions which are invariant with 
respect to this action. More precisely, a meromorphic function $f$ on $\H$ is 
a \textit{modular function} on $\Gamma(N)$ if for every $\gamma\in\Gamma(N)$ 
we have
\[
f(\gamma \tau)=(f|_0 \gamma)(\tau)=f(\tau).
\]
The set of such functions forms a field.
We let $\mathcal{F}_N$ denote the canonical subfield of those modular 
functions on $\Gamma(N)$ whose Fourier expansions are defined over 
$\Rat(\zeta_N)$, where $\zeta_N:=\eup^{2\pi\iup/N}$.

The important work of Kubert and Lang \cite{KubertLang} 
plays a central role in the study of these modular function fields. 
Their work, which is built around the Siegel $g_a$ functions and the Klein 
$\mathfrak{t}_a$ functions, allows us to understand the fields 
$\mathcal F_N$, as well as the Galois theoretic properties of the extensions
$\mathcal F_N/\mathcal F_1$. 
These results will be fundamental tools in the proofs of Theorems~\ref{thm} 
and \ref{thm2}.

\subsection{Basic Facts about Siegel functions}

We begin by recalling the definitions of the Siegel and Klein functions.
Let $\mathbf{B}_2(x):=x^2-x+\frac16$ be the second Bernoulli 
polynomial and $\eup(x):=\eup^{2\pi\iup x}$.
If $a=(a_1,a_2)\in \Rat^2$, then the \textit{Siegel function} $g_a$ is defined as 
\begin{equation}\label{Siegel_ga}
g_a(\tau):=q^{\tfrac{1}{2} \mathbf{B}_2(a_1)}
\eup\big(a_2(a_1-1)/2\big)
\prod_{n=1}^{\infty} \big(1-q^{n-1+a_1}\eup(a_2)\big)
\big(1-q^{n-a_1}\eup(-a_2)\big).
\end{equation}
Notice this is a change of sign from the usual normalization of the Siegel function.
The \textit{Klein function} $\mathfrak t_a$ is defined as 
\begin{equation}
\mathfrak t_a(\tau):= -\frac{g_a(\tau)}{\eta(\tau)^2}
\end{equation}
where $\eta(\tau):=q^{1/24}\prod_{n=1}^{\infty}(1-q^n)$ is the 
\textit{Dedekind $\eta$-function}.

Neither $g_a$ nor $\mathfrak t_a$ are modular on $\Gamma(N)$, 
however if $N\cdot a\in \Z^2$, then $\mathfrak t_a^{2N}$ is on $\Gamma(N)$ 
(or $\mathfrak t_a^N$ if $N$ is odd). 
Therefore if $g_a^{\lcm(12,2N)}\in \mathcal F_N,$ and if 
$N\cdot a'\in \Z^2$, then 
$\big(\frac{g_a(\tau)}{g_{a'}(\tau)}\big)^{2N}\in \mathcal F_N$ if $N$ 
is even and 
$\big(\frac{g_a(\tau)}{g_{a'}(\tau)}\big)^N\in \mathcal F_N$ if $N$ is odd.
Given $a\in \Rat^2$, we denote the smallest $N\in \N$ such that 
$N\cdot a\in \Z^2$ by $\Den(a)$.

\begin{theorem}[{\cite[Ch.~2 of K1 and K2]{KubertLang}}]
Assuming the notation above, the following are true:
\begin{enumerate}
\item If $\gamma \in \SL_2(\Z)$, then
\[
(\mathfrak{t}_a |_{-1}\gamma)(\tau)=\mathfrak{t}_{a\gamma}(\tau).
\]

\item If $b=(b_1,b_2)\in\Z^2$, then
\[
\mathfrak{t}_{a+b}(\tau)=\eup\big(1/2\cdot(b_1b_2+b_1+b_2-b_1a_2+b_2a_1)\big)
\mathfrak{t}_{a\gamma}(\tau).
\]
\end{enumerate}
\end{theorem}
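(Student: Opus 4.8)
The plan is to deduce both parts from the realization of $\mathfrak{t}_a$ as a lattice function. Writing $z_a:=a_1\tau+a_2$, the product in \eqref{Siegel_ga} together with the factor $\eta(\tau)^{-2}$ exhibits $\mathfrak{t}_a(\tau)$ as (a normalization of) the \emph{Klein form} $\mathfrak{k}(z_a;\tau)$ attached to the lattice $\Z\tau+\Z$; this is the classical expression of the Klein form through the Weierstrass $\sigma$-function and the Dedekind $\eta$-function. Granting this identification, part (1) becomes the weight $-1$ homogeneity of the Klein form under $\SL_2(\Z)$, and part (2) becomes its quasi-periodicity in the index $a$.

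For part (1), I would first record the homogeneity $\mathfrak{k}\big(z/(c\tau+d);\gamma\tau\big)=(c\tau+d)^{-1}\mathfrak{k}(z;\tau)$ for $\gamma=\big(\begin{smallmatrix}\alpha&\beta\\c&d\end{smallmatrix}\big)\in\SL_2(\Z)$, which follows from the functional equation of $\sigma$ under $\tau\mapsto\gamma\tau$ together with the $\eta$-multiplier; in the Klein-form normalization the two multipliers cancel, so no root of unity survives. One then computes $a_1\gamma\tau+a_2=\big((a_1\alpha+a_2c)\tau+(a_1\beta+a_2d)\big)/(c\tau+d)=z_{a\gamma}/(c\tau+d)$, whence $\mathfrak{t}_a(\gamma\tau)=\mathfrak{k}\big(z_{a\gamma}/(c\tau+d);\gamma\tau\big)=(c\tau+d)^{-1}\mathfrak{t}_{a\gamma}(\tau)$, which is exactly $(\mathfrak{t}_a|_{-1}\gamma)(\tau)=\mathfrak{t}_{a\gamma}(\tau)$. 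A self-contained alternative is to verify the identity on the generators $T=\big(\begin{smallmatrix}1&1\\0&1\end{smallmatrix}\big)$ and $S=\big(\begin{smallmatrix}0&-1\\1&0\end{smallmatrix}\big)$: the $T$-case is an elementary manipulation of \eqref{Siegel_ga} (here $c\tau+d=1$ and $aT=(a_1,a_1+a_2)$), while the $S$-case carries the real content and requires the $\tau\mapsto-1/\tau$ transformation of the Siegel function, i.e. the functional equation of the underlying theta/$\sigma$-function. This $S$-transformation is the main obstacle.

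For part (2), since $\eta(\tau)^2$ does not depend on $a$, it suffices to prove the corresponding relation for $g_a$, and I would reduce to the elementary shifts $b=(0,1)$ and $b=(1,0)$ and iterate. The shift $b=(0,1)$ leaves the infinite product in \eqref{Siegel_ga} unchanged, because $\eup(a_2+1)=\eup(a_2)$, and alters only the prefactor, giving $g_{(a_1,a_2+1)}/g_{(a_1,a_2)}=\eup\big((a_1-1)/2\big)$. The shift $b=(1,0)$ is substantive: $\mathbf{B}_2(a_1+1)-\mathbf{B}_2(a_1)=2a_1$ contributes $q^{a_1}$, the exponential prefactor contributes $\eup(a_2/2)$, and the two products are reindexed, the first losing the factor $1-q^{a_1}\eup(a_2)$ and the second gaining the factor $1-q^{-a_1}\eup(-a_2)$. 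The key algebraic identity $1-q^{-a_1}\eup(-a_2)=-q^{-a_1}\eup(-a_2)\big(1-q^{a_1}\eup(a_2)\big)$ makes these cancel, leaving $g_{(a_1+1,a_2)}/g_{(a_1,a_2)}=-\eup(-a_2/2)=\eup\big((1-a_2)/2\big)$. Iterating $b_2$ times and then $b_1$ times, multiplying the accumulated roots of unity, and simplifying via $\eup(k)=1$ for $k\in\Z$ produces the root of unity in the statement; transporting back through $\mathfrak{t}_a=g_a/\eta^2$ gives the claimed formula. The only care needed here is the bookkeeping of the finitely many product factors gained and lost, which is routine once the key identity above is in hand.
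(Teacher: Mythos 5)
The paper does not prove this theorem: it is quoted, typos and all, from Kubert--Lang, so the only meaningful comparison is with the classical argument you are reconstructing, and your reconstruction is sound. Part (2) is in fact a complete proof from the paper's definition \eqref{Siegel_ga}. Your two elementary shifts check out: $g_{(a_1,a_2+1)}/g_{(a_1,a_2)}=\eup\big((a_1-1)/2\big)$ since only the exponential prefactor moves, and $g_{(a_1+1,a_2)}/g_{(a_1,a_2)}=q^{a_1}\,\eup(a_2/2)\cdot\big({-q^{-a_1}}\eup(-a_2)\big)=\eup\big((1-a_2)/2\big)$ via your key identity; and the iteration is legitimate because the claimed exponent $f(a,b)=\tfrac12(b_1b_2+b_1+b_2-b_1a_2+b_2a_1)$ satisfies the cocycle relation modulo $1$, namely $f(a+b,b')+f(a,b)-f(a,b+b')=-b_1'b_2\in\Z$, so composing elementary shifts reproduces the general formula. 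One point worth flagging: what you derive (correctly) is the statement with a factor $\tfrac12$ in the exponent and with $\mathfrak{t}_a$ on the right-hand side; the printed theorem omits the $\tfrac12$ and misprints $\mathfrak{t}_{a\gamma}$ for $\mathfrak{t}_a$, as comparison with Theorem~\ref{KubertLangThm}~(2) (the $g_a$ version, where the $\tfrac12$ is present) confirms. Since $\eta(\tau)^2$ is independent of $a$, your transport from $g_a$ to $\mathfrak{t}_a$ is immediate.

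For part (1) your lattice route is exactly the Kubert--Lang argument: with $\mathfrak{k}(z;L)$ the $\sigma$-based Klein form, the homogeneity $\mathfrak{k}(\lambda z;\lambda L)=\lambda\,\mathfrak{k}(z;L)$ is exact because $\sigma$ and the quasi-period form are genuine lattice functions, so there are no multipliers to cancel --- your remark about $\sigma$- and $\eta$-multipliers cancelling pertains only to the $q$-product picture, not to the lattice picture. Combined with $L_{\gamma\tau}=(c\tau+d)^{-1}L_\tau$ and $a_1\gamma\tau+a_2=z_{a\gamma}/(c\tau+d)$, this yields $\mathfrak{t}_a(\gamma\tau)=(c\tau+d)^{-1}\mathfrak{t}_{a\gamma}(\tau)$, which is the claim under the paper's slash convention $(f|_{-1}\gamma)(\tau)=(c\tau+d)f(\gamma\tau)$. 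The one unproved ingredient, which you correctly isolate, is the identification $\mathfrak{t}_a(\tau)=\mathfrak{k}(a_1\tau+a_2;\Z\tau+\Z)$; note that with the paper's definitions this identification is \emph{equivalent} to the $S$-transformation of $g_a$ (the Jacobi triple product together with $\eta(-1/\tau)=\sqrt{-\iup\tau}\,\eta(\tau)$), so your ``self-contained alternative'' via generators is the same obstacle viewed from the other end rather than an independent route. Since that expansion is classical --- it is precisely how Kubert--Lang pass between the $\sigma$-definition and the $q$-product --- your proposal is a correct proof modulo this standard fact, and it moreover repairs the misprints in the statement as given.
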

These properties for $\mathfrak{t}_a$, \eqref{Siegel_ga}, 
and the fact that $\eta(\tau)^{24}=\Delta(\tau)$ is modular on $\SL_2(\Z)$, 
lead to the following properties for $g_a$.
\begin{theorem}[{\cite[ Ch.~2, Thm~1.2]{KubertLang}}]\label{KubertLangThm}
If $a\in\Z^2/N$ and $\Den(a)=N$, then the following are true:
\begin{enumerate}
\item If $\gamma\in \SL_2(\Z)$, then
\[
(g_a^{12}|_0\gamma)(\tau)=g^{12}_{a\gamma}(\tau).
\]

\item If $b=(b_1,b_2)\in\Z^2$, then
\[
g_{a+b}(\tau)=\eup\big(1/2\cdot(b_1b_2+b_1+b_2-b_1a_2+b_2a_1)\big)g_a(\tau).
\]
\item We have that $g_{-a}(\tau)=-g_{a}(\tau)$.
\item The $g_a(\tau)^{12N}$ are modular functions on $\Gamma(N)$. 
Moreover, if $\gamma\in \SL_2(\Z)$, then we have
\[
(g_a^{12}|_0\gamma)(\tau)=g^{12}_{a\gamma}(\tau).
\]
\end{enumerate}
\end{theorem}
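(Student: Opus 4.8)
The plan is to derive all four statements from the factorization $g_a=\mathfrak t_a\,\eta^2$ together with the two transformation properties of the Klein functions $\mathfrak t_a$ recorded in the preceding theorem, the explicit product \eqref{Siegel_ga}, and the modularity of $\Delta=\eta^{24}$ (weight $12$ on $\SL_2(\Z)$). The only genuinely computational input is the $a$-dependence of \eqref{Siegel_ga}; the rest is weight counting.

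For (1), I would raise $g_a=\mathfrak t_a\eta^2$ to the twelfth power to get $g_a^{12}=\mathfrak t_a^{12}\Delta$. Since $\mathfrak t_a$ transforms with weight $-1$ via $(\mathfrak t_a|_{-1}\gamma)=\mathfrak t_{a\gamma}$, its twelfth power contributes a factor $(c\tau+d)^{-12}$, which is exactly cancelled by the weight-$12$ automorphy factor of $\Delta(\gamma\tau)=(c\tau+d)^{12}\Delta(\tau)$. Thus $g_a^{12}(\gamma\tau)=\mathfrak t_{a\gamma}^{12}(\tau)\Delta(\tau)=g_{a\gamma}^{12}(\tau)$, a weight-$0$ transformation, giving (1). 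The second assertion of (4) is identical and needs no separate argument.

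For (2) and (3) I would argue directly from \eqref{Siegel_ga}, since the weight-$0$ factor $\eta^2$ drops out of every ratio $g_{a+b}/g_a$ and $g_{-a}/g_a$. For (2), replacing $a_1\mapsto a_1+b_1$ with $b_1\in\Z$ shifts the index in each half of the product; telescoping the finitely many surplus factors and using $1-w^{-1}=-w^{-1}(1-w)$ collapses the product ratio to $(-1)^{b_1}q^{-\binom{b_1}{2}-b_1a_1}\eup(-b_1a_2)$, while the prefactor ratio is computed from $\mathbf B_2(a_1+b_1)-\mathbf B_2(a_1)=2a_1b_1+b_1^2-b_1$ and the linear exponential. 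After the powers of $q$ cancel, the two contributions combine, and the identities $\eup(b_2)=1$ and $(-1)^{b_1}=\eup(b_1/2)$ let me rewrite everything as $\eup\big(\tfrac12(b_1b_2+b_1+b_2-b_1a_2+b_2a_1)\big)$, where $-b_2$ and $+b_2$ agree modulo $\eup(b_2)=1$; the case $b_1<0$ is symmetric, or one may simply verify the generators $b=(1,0),(0,1)$ and iterate. Statement (3) is the analogous one-line product computation: the ratio of products equals $-q^{-a_1}\eup(-a_2)$ and the prefactor ratio equals $q^{a_1}\eup(a_2)$ (using $\mathbf B_2(-a_1)-\mathbf B_2(a_1)=2a_1$), whose product is $-1$.

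Finally, for the modularity assertion in (4) I would combine (1) and (2). For $\gamma\in\Gamma(N)$, part (1) gives $(g_a^{12N}|_0\gamma)=g_{a\gamma}^{12N}$, so it suffices to show $g_{a\gamma}^{12N}=g_a^{12N}$. Since $\gamma\equiv I\pmod N$ and $a\in\tfrac1N\Z^2$, the vector $b:=a(\gamma-I)$ lies in $\Z^2$ and $a\gamma=a+b$; by (2), $g_{a\gamma}=\eup\big(\tfrac12 f\big)g_a$ with $f=b_1b_2+b_1+b_2-b_1a_2+b_2a_1\in\tfrac1N\Z$, because $b_1,b_2\in\Z$ and $a_1,a_2\in\tfrac1N\Z$. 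Raising to the power $12N$ turns the phase into $\eup(6Nf)=1$, so $g_a^{12N}$ is $\Gamma(N)$-invariant; together with the holomorphy and non-vanishing of $g_a$ on $\H$ and its standard meromorphic behavior at the cusps (both read off from \eqref{Siegel_ga}), this shows $g_a^{12N}\in\mathcal F_N$ is a modular function on $\Gamma(N)$. I expect the bookkeeping of the phase in (2)---the exact second Bernoulli and linear-exponential contributions and the reduction modulo $\eup(\Z)$---to be the only delicate point; everything else is weight counting and a short telescoping.
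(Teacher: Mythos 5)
Your proposal is correct, and it follows exactly the route the paper indicates for this cited result of Kubert--Lang: deduce (1) from $g_a=\mathfrak t_a\,\eta^2$ together with the weight $-1$ law for $\mathfrak t_a$ and the weight $12$ modularity of $\Delta=\eta^{24}$, verify (2) and (3) by direct telescoping in the product \eqref{Siegel_ga} (your phase bookkeeping, including the generator check for $b=(1,0),(0,1)$ and the reduction modulo $\eup(b_2)=1$, is accurate), and obtain (4) by writing $a\gamma=a+a(\gamma-I)$ for $\gamma\in\Gamma(N)$ so that the root of unity $\eup(f/2)$ with $f\in\tfrac1N\Z$ dies upon raising to the power $12N$. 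No gaps; this matches the paper's derivation in both structure and substance.
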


The following theorem addresses the modularity properties of 
products and quotients of Siegel functions.

\begin{theorem}[{\cite[Ch.~3, Lemma 5.2, Thm 5.3]{KubertLang}}]\label{F_N}
Let $N\geq 2$ be an integer, and let $\{m(a)\}_{r\in \frac{1}{N}\Z^2/\Z^2}$ 
be a set of integers. 
Then the product of Siegel functions
\[
\prod_{a\in \frac{1}{N}\Z^2/\Z^2}g_a^{m(a)}(\tau)
\]
belongs to $\mathcal F_N$ if $\{m(a)\}$ satisfies the following:
\begin{enumerate}
\item We have that $\sum_a m(a)(Na_1)^2\equiv 
\sum_a m(a)(Na_2)^2\equiv 0 \pmod{\gcd(2,N)\cdot N}$.
\item We have that $\sum_a m(a)(Na_1)(Na_2)\equiv 0 \pmod{N}$.
\item We have that $\gcd(12,N)\cdot \sum_a m(a)\equiv 0 \pmod{12}$.
\end{enumerate}
\end{theorem}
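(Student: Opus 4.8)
The plan is to prove sufficiency by reducing membership in $\mathcal F_N$ to two independent checks—invariance under $\Gamma(N)$ and rationality of the Fourier expansion over $\Rat(\zeta_N)$—and to handle each by tracking the root-of-unity factors produced by the transformation laws of Theorem~\ref{KubertLangThm}. Throughout I would abbreviate $A_i:=Na_i$, which are integers well defined modulo $N$, and write $\Sigma_{11}:=\sum_a m(a)A_1^2$, $\Sigma_{22}:=\sum_a m(a)A_2^2$, $\Sigma_{12}:=\sum_a m(a)A_1A_2$ and $\mu:=\sum_a m(a)$, so that hypotheses (1)--(3) read $\Sigma_{11}\equiv\Sigma_{22}\equiv 0\pmod{\gcd(2,N)N}$, $\Sigma_{12}\equiv 0\pmod N$, and $\gcd(12,N)\,\mu\equiv 0\pmod{12}$.

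For the invariance I would first record the weight-zero transformation of a single Siegel function under the full modular group: writing $g_a=\mathfrak t_a\,\eta^2$ and combining the transformation of $\mathfrak t_a$ with that of $\eta^2$ gives $g_a(\gamma\tau)=\zeta(\gamma)\,g_{a\gamma}(\tau)$ for all $\gamma\in\SL_2(\Z)$, where $\zeta(\gamma)$ is a twelfth root of unity (the square of the Dedekind multiplier) independent of $a$. For $\gamma\in\Gamma(N)$ the congruence $\gamma\equiv I\pmod N$ forces $b:=a(\gamma-I)\in\Z^2$, so $a\gamma=a+b$ and Theorem~\ref{KubertLangThm}(2) gives $g_{a\gamma}=\eup\big(\tfrac12(b_1b_2+b_1+b_2-b_1a_2+b_2a_1)\big)\,g_a$. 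Taking the product, $\prod_a g_a^{m(a)}(\gamma\tau)=\zeta(\gamma)^{\mu}\,\eup\big(\sum_a m(a)E_\gamma(a)\big)\,\prod_a g_a^{m(a)}(\tau)$, where $E_\gamma(a):=\tfrac12(b_1b_2+b_1+b_2-b_1a_2+b_2a_1)$ with $b=a(\gamma-I)$. Invariance on $\Gamma(N)$ is thus equivalent to $\tfrac{1}{12}k(\gamma)\mu+\sum_a m(a)E_\gamma(a)\in\Z$ for every $\gamma\in\Gamma(N)$, where $\zeta(\gamma)=\eup(k(\gamma)/12)$.

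The heart of the matter is the explicit evaluation of this exponent. Writing each integer entry of $\gamma-I$ as $N$ times an integer and expanding $b_1,b_2$ linearly in $A_1,A_2$, the sum $\sum_a m(a)E_\gamma(a)$ collects into a $\Z$-linear combination of $\Sigma_{11},\Sigma_{22},\Sigma_{12}$ and the linear sums $\sum_a m(a)A_i$, in which the genuinely fractional contributions carry denominators dividing $2N$, coming from the term $-b_1a_2+b_2a_1$ divided by $N$ and from the overall factor $\tfrac12$. Hypotheses (1) and (2) are engineered to clear exactly these: the modulus $\gcd(2,N)N$ in (1) reconciles the factor $\tfrac12$ with the denominator $N$ (the two parities of $N$ accounting for the $\gcd(2,N)$), and (2) clears the mixed term. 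The delicate point is that the leftover half-integer contributions couple the unconstrained linear sums $\sum_a m(a)A_i$ to the mixed quadratic sum $\Sigma_{12}$; these I would cancel using the relations $\det\gamma=1$ and $\gamma\equiv I\pmod N$ among the entries of $\gamma$. Finally the twelfth-root factor $\zeta(\gamma)^{\mu}$ is absorbed by (3): for the translation $\gamma:\tau\mapsto\tau+N$ one has $\zeta(\gamma)=\eup(N/12)$, and $\gcd(12,N)\mu\equiv 0\pmod{12}$ is equivalent to $N\mu\equiv0\pmod{12}$, so $\zeta(\gamma)^{\mu}=1$; the general $\gamma$ follows from the same congruence together with the multiplier relations.

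For the rationality of the Fourier coefficients I would use that each $g_a^{12N}$ lies in $\mathcal F_N$ by Theorem~\ref{KubertLangThm}(4), so that its expansion has coefficients in $\Rat(\zeta_N)$; hence the $12N$-th power $\big(\prod_a g_a^{m(a)}\big)^{12N}=\prod_a\big(g_a^{12N}\big)^{m(a)}$ already lies in $\mathcal F_N$. Combined with the $\Gamma(N)$-invariance just established, it remains only to check that the leading Fourier coefficient of $\prod_a g_a^{m(a)}$ itself lies in $\Rat(\zeta_N)$: by \eqref{Siegel_ga} this coefficient is a root of unity whose exponent is a $\Z$-combination of $\Sigma_{12}$ and the linear sums divided by $N$, so hypotheses (1)--(2) render it a power of $\zeta_N$, while (3) fixes the accompanying fractional power of $q$. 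Since a formal series whose leading coefficient and whose $12N$-th power both lie in $\Rat(\zeta_N)$ must have all its coefficients there, the product lies in $\mathcal F_N$. I expect the main obstacle to be the root-of-unity bookkeeping of the third step: simultaneously tracking the half-integer parities that produce the factor $\gcd(2,N)$ and the Dedekind twelfth-root multiplier on $\Gamma(N)$ that produces $\gcd(12,N)$, and in particular verifying that the linear sums $\sum_a m(a)A_i$—for which the theorem imposes no hypothesis—are eliminated by the determinant relation rather than requiring an additional condition.
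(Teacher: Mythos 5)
A preliminary remark: the paper offers no proof of this statement at all --- it is quoted directly from Kubert--Lang \cite[Ch.~3]{KubertLang} --- so your proposal can only be measured against their original argument, which your outline essentially reconstructs: factor $g_a=\mathfrak{t}_a\eta^2$, use the exact $\SL_2(\Z)$-equivariance of the Klein forms together with the translation law of Theorem~\ref{KubertLangThm}(2), reduce $\Gamma(N)$-invariance to the vanishing of an explicit exponent, and get rationality of the Fourier coefficients from the $12N$-th powers by formal root extraction. Several of your reductions are correct as stated: $g_a(\gamma\tau)=\zeta(\gamma)g_{a\gamma}(\tau)$ with $\zeta=\epsilon_\eta^2$ an order-$12$ character of $\SL_2(\Z)$ independent of $a$; since the image of $\Gamma(N)$ in $\SL_2(\Z)^{\mathrm{ab}}\cong\Z/12\Z$ is generated by the class of $T^N=\left(\begin{smallmatrix}1&N\\0&1\end{smallmatrix}\right)$, condition (3) kills $\zeta^\mu$ on all of $\Gamma(N)$ because $12\mid N\mu\iff 12\mid\gcd(12,N)\mu$; and in characteristic zero an $M$-th root normalized to leading coefficient $1$ of a series over $\Rat(\zeta_N)$ again has coefficients in $\Rat(\zeta_N)$, so the last step is sound.

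The genuine gap is the mechanism you propose at the point you yourself flag as the main obstacle: the linear sums $L_i:=\sum_a m(a)(Na_i)$ are \emph{not} eliminated by the determinant relation. Already for $\gamma=T^N$, where $\det\gamma=1$ imposes nothing, writing $b=a(\gamma-I)$ gives $\sum_a m(a)E_\gamma(a)=\tfrac{1}{2}L_1+\tfrac{1}{2N}\Sigma_{11}$, and no identity among matrix entries can remove the half-integer $\tfrac12 L_1$. What saves the argument is the parity identity $x^2\equiv x\pmod 2$ for $x=Na_i\in\Z$, which forces $L_i\equiv\Sigma_{ii}\pmod 2$: thus condition (1) --- and this is exactly why it is stated modulo $\gcd(2,N)\cdot N$ rather than modulo $N$ --- pins down the parities of the otherwise unconstrained linear sums. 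Concretely, writing $\gamma=I+N\left(\begin{smallmatrix}\alpha&\beta\\ c&d\end{smallmatrix}\right)$: for $N$ even, (1) makes $\Sigma_{ii}$, hence $L_i$, even and all half-integer terms integral; for $N$ odd, with $\Sigma_{ii}=Ns_i$ and $\Sigma_{12}=Ns_{12}$, the obstruction reduces to $s_1\alpha(\beta+1)+s_2d(c+1)+s_{12}(\alpha d+\beta c+\alpha+d)\equiv 0\pmod 2$, and only \emph{here} does $\det\gamma=1$ enter, in the form $\alpha+d+\alpha d+\beta c\equiv 0\pmod 2$, which annihilates the $s_{12}$ term and forces $\beta$ (resp.\ $c$) odd whenever $\alpha$ (resp.\ $d$) is. So the division of labor is the reverse of what you predicted: the determinant relation controls the entry parities and the mixed term $\Sigma_{12}$, while condition (1) controls the linear sums via $x^2\equiv x\pmod2$. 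The same parity identity (not condition (3) alone, as you assert) is also what places the leading exponent $\tfrac12\sum_a m(a)\mathbf{B}_2(a_1)=\tfrac{\Sigma_{11}}{2N^2}-\tfrac{L_1}{2N}+\tfrac{\mu}{12}$ in $\tfrac1N\Z$ in your rationality step. With these substitutions your outline closes and agrees with the Kubert--Lang computation.
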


Additionally, we have the following important results about the 
algebraicity of the singular values of the Siegel functions in relation to
the singular values of the $\SL_2(\Z)$ modular function
\begin{align*}
j(\tau)&:=\frac{\big(1+240\sum_{n=1}^{\infty} 
\sum_{d\mid n}d^3 q^n\big)^3}{q\prod_{n=1}^{\infty}
(1-q^n)^{24}}\\
&\hphantom{:}=
\frac{\eta(\tau)^{24}}{\eta(2\tau)^{24}}+3\cdot2^{8}+3\cdot2^{16}\,
\frac{\eta(2\tau)^{24}}{\eta(\tau)^{24}}+2^{24}\,
\frac{\eta(2\tau)^{48}}{\eta(\tau)^{48}}\\[2mm]
&\hphantom{:}=q^{-1}+744+196884q+\cdots,
\end{align*}
\noindent
which are well known to be algebraic by the theory of 
complex multiplication (for example, see \cite{Borel, Cox}).

\begin{theorem}[{\cite[Ch. 1, Thm. 2.2]{KubertLang}}]\label{ga-Algebraic} 
If $\tau$ is a CM point and $N=\Den(a)$, then the following are true:
\begin{enumerate}
\item We have that $g_a(\tau)$ is an algebraic integer.
\item If $N$ has at least two prime factors, then $g_a(\tau)$ is a 
unit over $\Z[j(\tau)]$.
\item If $N=p^r$ is a prime power, then $g_a(\tau)$ is a unit over 
$\Z[1/p][j(\tau)]$.
\item If $c\in \Z$ and $(c,N)=1$, then $(g_{ca}/g_a)$ is a unit over 
$\Z[j(\tau)]$.
\end{enumerate}
\end{theorem}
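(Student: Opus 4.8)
The plan is to deduce all four assertions from the transformation and modularity properties already recorded for the Siegel functions, combined with the theory of complex multiplication, by passing to the integral power $g_a^{12N}$ and studying its divisor on the modular curve associated with $\Gamma(N)$. The guiding principle is that at a CM point $\tau$ the value $j(\tau)$ is an algebraic integer, so that any modular function integral over $\Z[j]$ takes an algebraic-integer value at $\tau$, while a function that is moreover a unit in the integral closure of $\Z[j]$ away from a set $S$ of primes takes a value that is a unit over $\Z[S^{-1}][j(\tau)]$.

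First I would settle algebraicity. By Theorem~\ref{KubertLangThm}(4) the function $g_a^{12N}$ is modular on $\Gamma(N)$, and inspection of the product \eqref{Siegel_ga} shows its Fourier coefficients lie in $\Rat(\zeta_N)$, so $g_a^{12N}\in\mathcal{F}_N$. Since $\tau$ is a CM point, the fundamental fact that functions in $\mathcal{F}_N$ assume algebraic values at CM arguments gives that $g_a(\tau)^{12N}$, and hence $g_a(\tau)$, is algebraic.

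Next I would prove the integrality asserted in~(1). The product \eqref{Siegel_ga} shows at once that $g_a$ is holomorphic and nonvanishing on $\H$, so the divisor of $g_a^{12N}$ is supported entirely on the cusps. Transporting the $q$-expansion to an arbitrary cusp by means of $g_{a\gamma}$ via Theorem~\ref{KubertLangThm}(1), one checks that every such expansion has algebraic-integer coefficients, since these are built from rational integers and the root of unity $\eup(a_2)$. A function in $\mathcal{F}_N$ that is holomorphic on $\H$ and has algebraic-integer $q$-expansions at all cusps is integral over $\Z[j]$; specializing at $\tau$ and invoking the integrality of $j(\tau)$ then shows $g_a(\tau)$ is an algebraic integer.

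The genuine work, and the step I expect to be the main obstacle, is the unit statements~(2)--(4), where one must additionally control $1/g_a(\tau)$. Here I would compute the divisor of $g_a^{12N}$ explicitly: its order at each cusp is a multiple of $\mathbf{B}_2$ evaluated at the reduced first coordinate of the relevant $a\gamma$. To see that $g_a(\tau)$ is a unit I would use the product criterion of Theorem~\ref{F_N} together with the distribution relations among the $g_a$ to manufacture an auxiliary modular unit whose divisor matches that of $g_a^{12N}$ but which is a genuine unit in the integral closure of $\Z[j]$---morally a suitable power of $\Delta$ redistributed over the torsion points---so that the quotient has integral inverse. The prime-power dichotomy should then fall out of the arithmetic of the cusps: when $N=\Den(a)$ has at least two distinct prime factors the cuspidal relation becomes principal over $\Z[j]$ with no degeneration in any fibre, giving a unit over $\Z[j(\tau)]$, whereas for $N=p^r$ the cusps collapse modulo $p$ and the relation is principal only after inverting $p$, which is exactly what forces the ring $\Z[1/p][j(\tau)]$ in~(3). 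For~(4), the ratio $g_{ca}/g_a$ with $(c,N)=1$ has, by the matching of the $\mathbf{B}_2$-orders, a divisor on which the $p$-contribution cancels, so it remains a unit over $\Z[j(\tau)]$ even in the prime-power case. The delicate point throughout is the passage from the cuspidal divisor of a modular unit to the prime factorization of its singular value; this is effected by the Shimura reciprocity law, and pinning down precisely which primes persist in the prime-power case---and why only $p$ must be inverted---is where I anticipate the real difficulty.
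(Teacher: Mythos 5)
You should first note that the paper does not prove this statement at all: it is imported verbatim from Kubert--Lang \cite[Ch.~2]{KubertLang}, so the benchmark is their argument. Measured against it, your part (1) is essentially sound: $g_a$ is holomorphic and nonvanishing on $\H$, the set $\{g_{a\gamma}^{12N}\}$ is stable under the full $\GL_2(\Z/N\Z)$-action (you need the $\zeta_N\mapsto\zeta_N^d$ twists as well as the $\SL_2$-cusp expansions, but the orbit supplies both), the Fourier coefficients at every cusp lie in $\Z[\zeta_N]$, so the symmetric functions of the orbit lie in $\Z[j]$ and $g_a(\tau)$ is integral over $\Z[j(\tau)]$, which is an algebraic integer at a CM point. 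That is the Kubert--Lang proof of integrality.

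For (2)--(4), however, your central device --- manufacturing an auxiliary modular unit whose divisor matches that of $g_a^{12N}$ so that ``the quotient has integral inverse'' --- is circular, and this is a genuine gap. The product of the $g_{a\gamma}^{12N}$ over the $\GL_2(\Z/N\Z)$-orbit is $\SL_2(\Z)$-invariant, holomorphic and nonvanishing on $\H$, with divisor supported at the single cusp of $X(1)\cong\mathbb{P}^1$; such a function is a \emph{constant}. So matching cuspidal divisors costs nothing: any two functions with the same divisor differ by a constant, and the entire arithmetic content of the theorem is the value of that constant. It is computed not from divisors, degeneration of cusps, or Shimura reciprocity, but from the distribution relations, exactly as in the cyclotomic prototype $\prod_{c=1}^{N-1}(1-\zeta_N^c)=N$: telescoping over exact denominators shows that $\prod_{\Den(a)=N} g_a^{12N}$ is $\pm$ a power of $p$ when $N=p^r$ and $\pm 1$ when $N$ has at least two distinct prime factors. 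Each factor is an algebraic integer (by part (1)) whose full set of conjugates multiplies to a unit, resp.\ a $p$-power, whence each singular value is a unit over $\Z[j(\tau)]$, resp.\ over $\Z[1/p][j(\tau)]$ --- one simply specializes the integral dependence at $\tau$. Part (4) follows the same way: since $(c,N)=1$, the map $a\mapsto ca$ permutes the classes of exact denominator $N$, so the product of the conjugates of $g_{ca}/g_a$ is $\pm1$ while numerator and denominator are separately integral; the $p$-power cancels identically rather than ``on the divisor.'' Your sketch correctly anticipates where the difficulty lies but never performs this norm computation, which is the actual proof.
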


\subsection{Galois theory of singular values of products of Siegel 
functions}\label{Galois}
We now recall the Galois-theoretic properties of extensions of modular 
function fields, and we then relate these properties to the Siegel and 
Klein functions.

The Galois group $\Gal(\mathcal{F}_N/\mathcal{F}_1)$ is isomorphic to 
$\GL_2(N)/\{\pm I\}=\GL_2(\Z/N\Z)/{\{\pm I}\}$
(see {\cite[Ch. 3, Lemma 2.1]{KubertLang}}), where $I$ is the identity matrix.
This group factors naturally as 
\[
\bigg\{ \begin{pmatrix}1&0\\0&d\end{pmatrix}~:~ d\in (\Z/N\Z)^\times\bigg\}
\times \SL_2(N)/{\{\pm I\}},
\]
where an element $\left(\begin{smallmatrix}1&0\\0&d\end{smallmatrix}\right)$ 
acts on the Fourier coefficients by sending $\zeta_N\to\zeta_N^d$, and a 
matrix $\gamma\in \SL_2(\Z)$ acts by the standard fractional linear 
transformation on $\tau$. 
If $f(\tau)\in \mathcal{F}_N$ and $\gamma\in \GL_2(N)$, then we use the 
notation $f(\tau)_{(\gamma)}:=(\gamma \circ f)(\tau)$. 
Applying these facts to the Siegel functions, we obtain the following.

\begin{proposition}
If $a\in \Rat^2$, and $\Den(a)$ divides $N$, then the multiset  
\[
\big\{ g^{12N}_a(\tau)(\gamma):=g^{12N}_{a\gamma}(\tau):~ 
\gamma\in \GL_2(N)\big\}
\]
is a union of Galois orbits for $g^{12N}_a(\tau)$ over $\mathcal F_1$.
\end{proposition}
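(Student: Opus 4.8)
The plan is to prove that the Galois automorphism of $\mathcal{F}_N/\mathcal{F}_1$ attached to $\gamma\in\GL_2(N)/\{\pm I\}$ carries $g_a^{12N}$ to $g_{a\gamma}^{12N}$. Granting this, every entry of the displayed multiset is a Galois conjugate of $g_a^{12N}(\tau)$ over $\mathcal{F}_1$, so the whole multiset is supported on the single Galois orbit of $g_a^{12N}(\tau)$, and a multiplicity count will show it to be a union of copies of that orbit. First I would check that $g_a^{12N}\in\mathcal{F}_N$, so that $\Gal(\mathcal{F}_N/\mathcal{F}_1)\cong\GL_2(N)/\{\pm I\}$ genuinely acts on it. Since $\Den(a)\mid N$ we have $Na\in\Z^2$, and Theorem~\ref{F_N} applies with the single weight $m(a)=12N$: conditions (1) and (2) hold because $12N$ is divisible by $\gcd(2,N)\cdot N$ and by $N$, and condition (3) holds because $\gcd(12,N)\cdot 12N\equiv 0\pmod{12}$.

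Next I would compute the action factorwise, using
\[
\GL_2(N)/\{\pm I\}\cong\Big\{\mathrm{diag}(1,d):d\in(\Z/N\Z)^\times\Big\}\times\SL_2(N)/\{\pm I\}.
\]
For $\gamma\in\SL_2(N)$ the automorphism acts by $f\mapsto f|_0\gamma$ (the fractional linear transformation on $\tau$), and Theorem~\ref{KubertLangThm}(4) gives $(g_a^{12}|_0\gamma)(\tau)=g_{a\gamma}^{12}(\tau)$, whence $g_a^{12N}(\tau)_{(\gamma)}=g_{a\gamma}^{12N}(\tau)$. For the diagonal matrix $\mathrm{diag}(1,d)$, which acts on Fourier coefficients by $\zeta_N\mapsto\zeta_N^d$, I would read the substitution directly off the product \eqref{Siegel_ga}: writing $\eup(\pm a_2)=\zeta_N^{\pm Na_2}$ and observing that after raising to the $12N$-th power the prefactor $\eup\big(a_2(a_1-1)/2\big)$ collapses to an honest power of $\zeta_N$, the substitution $\zeta_N\mapsto\zeta_N^d$ replaces $a_2$ by $da_2$ in every factor while fixing the fractional $q$-power $q^{6N\mathbf{B}_2(a_1)}$ (as it must, the diagonal action fixing $q$). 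Thus $\mathrm{diag}(1,d)$ sends $g_a^{12N}$ to $g_{(a_1,da_2)}^{12N}=g_{a\,\mathrm{diag}(1,d)}^{12N}$.

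Combining the two factors, an arbitrary $\gamma\in\GL_2(N)/\{\pm I\}$ sends $g_a^{12N}$ to $g_{a\gamma}^{12N}$, so $\{g_{a\gamma}^{12N}:\gamma\in\GL_2(N)/\{\pm I\}\}$ is exactly the set of Galois conjugates of $g_a^{12N}(\tau)$, each distinct conjugate occurring with multiplicity equal to the order of its stabilizer. Indexing instead over all of $\GL_2(N)$ merely doubles these multiplicities: since $g_{-a}=-g_a$ by Theorem~\ref{KubertLangThm}(3) and $12N$ is even, we have $g_{a(-I)}^{12N}=g_a^{12N}$. Hence the multiset is a union of copies of a single Galois orbit over $\mathcal{F}_1$, proving the proposition.

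The main obstacle is the diagonal computation. One must verify with care that under $\zeta_N\mapsto\zeta_N^d$ the prefactor of $g_a^{12N}$ transforms precisely as $a_2\mapsto da_2$, and that no root of unity of order not dividing $N$ sneaks in through the exponent $6N\mathbf{B}_2(a_1)$ or through the term $\eup\big(a_2(a_1-1)/2\big)$ before the power is taken; this is exactly where the precise normalization in \eqref{Siegel_ga} and the hypothesis $\Den(a)\mid N$ are needed. The $\SL_2$ part, by contrast, is immediate from the transformation law already recorded in Theorem~\ref{KubertLangThm}.
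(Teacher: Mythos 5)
Your proposal is correct and follows essentially the same route the paper takes (the paper leaves the proposition as an immediate consequence of the $\Gal(\mathcal{F}_N/\mathcal{F}_1)\cong\GL_2(N)/\{\pm I\}$ factorization together with Theorem~\ref{KubertLangThm}): membership of $g_a^{12N}$ in $\mathcal{F}_N$, the $\SL_2$ part via the transformation law, and the diagonal part $\zeta_N\mapsto\zeta_N^d$ giving $a_2\mapsto da_2$ read off the product \eqref{Siegel_ga}. Your explicit verification of the diagonal action and the multiplicity bookkeeping via orbit--stabilizer simply make precise what the paper asserts by citation.
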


If $\theta$ is a CM point of discriminant $-D$, we define the field 
\[
K_{(N)}(\theta):=\Rat(\theta)\big(f(\theta):~f\in \mathcal F_N 
\textrm{ s.t. $f$ is defined and finite at $\theta$}\big),
\]
and $\mathcal H:=\Rat(\theta,j(\theta))$ be the Hilbert class field over 
$\Rat(\theta)$. 
The Galois group $K_{(N)}(\theta)/\mathcal H$ is isomorphic to the matrix 
group $W_{N,\theta}$ (see \cite{Shimura}) defined by
\[
W_{N,\theta}=\bigg\{\begin{pmatrix}t-sB&-sC\\sA&t\end{pmatrix} 
\in \GL_2(\Z/N\Z) \bigg\}/\bigg\{\pm \begin{pmatrix}1&0\\0&1\end{pmatrix} 
\bigg\},
\]
where $Ax^2+Bx+C$ is a minimal polynomial for $\theta$ over $\Z$. 
The Galois group $\Gal(\mathcal H/ \Rat)$ is isomorphic to the group 
$\mathcal Q_D$ of primitive reduced positive-definite integer binary 
quadratic forms of negative discriminant $-D$. 
For each $Q=ax^2+bxy+cy^2\in \mathcal Q_D$, we define the corresponding 
CM point $\tau_Q=\frac{-b+\sqrt{-D}}{2a}$. 
In order to define the action of this group, we must also define 
corresponding matrices $\beta_Q\in \GL_2(\Z/N \Z)$ which we may build up 
by way of the Chinese Remainder Theorem and the following congruences. 
For each prime $p$ dividing $N$, we have the following congruences which 
hold $\pmod {p^{\ord_p(N)}}$:
\begin{align*}
\beta_Q & \equiv \begin{cases} 
\small \begin{pmatrix}a&\frac b2\\0&1\end{pmatrix} & 
\text{if $p\nmid a$}\\
\small \begin{pmatrix}-\frac b2&-c\\1&0\end{pmatrix} & 
\text{if $p|a$, and $p\nmid c$} \\
\small \begin{pmatrix}-\frac b2 -a&-\frac b2-c\\1&-1\end{pmatrix} & 
\text{if  $p|a$, and $p|c$}
\end{cases} 
\intertext{if $-D\equiv 0 \pmod 4$, and}
\beta_Q & \equiv \begin{cases} 
\small \begin{pmatrix}a&\frac {b-1}{2}\\0&1\end{pmatrix} & 
\text{if $p\nmid a$} \\
\small \begin{pmatrix}-\frac {b+1}{2}&-c\\1&0\end{pmatrix} & 
\text{if $p|a$,\textrm{ and } $p\nmid c$} \\
\small \begin{pmatrix}-\frac {b+1}{2} -a&\frac {1-b}{2}-c\\1&-1\end{pmatrix} & 
\text{if $p|a$, \textrm{ and }$p|c$} \end{cases} 
\end{align*}
if $- D\equiv 1 \pmod 4$. 
Then given $\theta=\tau_{Q'}$ for some $Q'\in \mathcal Q_D$, define $\delta_{Q}(\theta):=\beta_{Q'}^{-1}\beta_Q$.
The Galois group $\Gal (\mathcal H/ \Rat)$ can be extended into 
$\Gal(K_{(N)}(\theta)/\Rat)$ by taking the action of a quadratic 
form $Q$ on the element $f(\theta)\in K_{(N)}(\theta)$ to be given by
\[
Q\circ f(\theta)=f(\tau_Q)_{(\delta_{Q}(\theta))}.
\]
We combine these facts into the following theorem.
\begin{theorem}\label{GaloisThm}
Let $F(\tau)$ be in $\mathcal F_N$ and let $\theta$ be a CM point of 
discriminant $-D<0$. Then the multiset
\[
\big\{F(\tau_Q)_{(\gamma\cdot \delta_{Q}(\theta))}:\:(\gamma,Q)\in 
W_{\kappa,\tau}\times \mathcal{Q}_D\big\}
\]
is a union of the Galois orbits  of $F(\theta)$ over $\Rat$.
\end{theorem}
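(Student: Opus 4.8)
The plan is to recognize the displayed multiset as the image of the single value $F(\theta)$ under \emph{every} automorphism in $\Gal(K_{(N)}(\theta)/\Rat)$, and then to apply the elementary group-theoretic fact that feeding one algebraic number to all the elements of a finite Galois group reproduces each of its $\Rat$-conjugates with multiplicity equal to the order of the stabilizer---that is, a disjoint union of copies of a single Galois orbit, which is exactly the assertion to be proved.

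First I would check that $F(\theta)$ lies in $K_{(N)}(\theta)$. Since $F\in\mathcal F_N$ and (after restricting to $\tau$ for which $F$ is finite at the relevant CM points, or after clearing poles) $F$ is defined and finite at $\theta$, this is immediate from the definition of $K_{(N)}(\theta)$ in Section~\ref{Galois}. Because $K_{(N)}(\theta)$ is a class field over $\Rat(\theta)$ it is Galois over $\Rat$, so all $\Rat$-conjugates of $F(\theta)$ already sit inside it, and the problem reduces to enumerating $\Gal(K_{(N)}(\theta)/\Rat)$ and reading off how its elements act on a value $f(\theta)$.

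For this I would use the tower $\Rat\subseteq\mathcal H\subseteq K_{(N)}(\theta)$ together with the two actions assembled in Section~\ref{Galois}. The top layer $\Gal(K_{(N)}(\theta)/\mathcal H)\cong W_{N,\theta}$ acts through the $\GL_2(\Z/N\Z)$-action on $\mathcal F_N$, sending $f(\theta)\mapsto f(\theta)_{(\gamma)}$, while the bottom layer $\Gal(\mathcal H/\Rat)\cong\mathcal Q_D$ is lifted to $K_{(N)}(\theta)$ by $Q\circ f(\theta)=f(\tau_Q)_{(\delta_Q(\theta))}$. Both are given by right multiplication on the $\GL_2(\Z/N\Z)$-index, so composing a lift of $Q$ with an element $\gamma\in W_{N,\theta}$ carries $f(\theta)$ to $f(\tau_Q)_{(\gamma\cdot\delta_Q(\theta))}$. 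Every $\sigma\in\Gal(K_{(N)}(\theta)/\Rat)$ arises this way: its restriction to $\mathcal H$ names a form $Q\in\mathcal Q_D$, and the residual freedom is precisely a $\gamma\in W_{N,\theta}$. Thus $(\gamma,Q)\mapsto\sigma$ surjects $W_{N,\theta}\times\mathcal Q_D$ onto $\Gal(K_{(N)}(\theta)/\Rat)$ with fibres of constant size, and specializing $f=F$ identifies the multiset in the statement with several copies of $\{\sigma F(\theta):\sigma\in\Gal(K_{(N)}(\theta)/\Rat)\}$. By the group-theoretic remark above this is a union of Galois orbits of $F(\theta)$ over $\Rat$.

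The delicate point---which I would isolate and cite rather than reprove---is that the $\mathcal Q_D$-action $f(\theta)\mapsto f(\tau_Q)_{(\delta_Q(\theta))}$ is \emph{well defined}, depending only on the value $f(\theta)$ and not on the chosen representative $f\in\mathcal F_N$, and that it genuinely extends $\Gal(\mathcal H/\Rat)$ to all of $\Gal(K_{(N)}(\theta)/\Rat)$. This is the explicit form of Shimura's reciprocity law encoded in the matrices $\beta_Q$ and $\delta_Q(\theta)=\beta_{Q'}^{-1}\beta_Q$ of Section~\ref{Galois}, which I would invoke together with Theorem~\ref{KubertLangThm}; it is also what guarantees that the two actions compose to the index $\gamma\cdot\delta_Q(\theta)$ in the prescribed order. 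The only residual bookkeeping is to ensure that $F$ has no poles at any $\tau_Q$, which is harmless after passing to a suitable power or clearing denominators, exactly as in the intended applications of Theorem~\ref{thm}.
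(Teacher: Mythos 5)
Your proposal is correct and follows essentially the same route as the paper, which offers no separate argument but simply ``combines'' the facts of Section~\ref{Galois}: the value $F(\theta)$ lies in $K_{(N)}(\theta)$, the tower $\Rat\subseteq\mathcal H\subseteq K_{(N)}(\theta)$ with $\Gal(K_{(N)}(\theta)/\mathcal H)\cong W_{N,\theta}$ and $\Gal(\mathcal H/\Rat)$ parametrized by $\mathcal Q_D$ via Shimura's matrices $\beta_Q$, and the composed action $f(\theta)\mapsto f(\tau_Q)_{(\gamma\cdot\delta_Q(\theta))}$ exhausting $\Gal(K_{(N)}(\theta)/\Rat)$, so the multiset is a union of copies of the $\Rat$-orbit of $F(\theta)$. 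Your isolation of the well-definedness of the lifted $\mathcal Q_D$-action as the point to cite (Shimura reciprocity, together with Theorem~\ref{KubertLangThm}) matches exactly what the paper implicitly relies on.
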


\section{Proofs of Theorems~\ref{thm} and \ref{thm2}}\label{Proofs}
Here we prove Theorems~\ref{thm} and \ref{thm2}. 
We shall prove these theorems using the results of the previous section.

\subsection{Reformulation of the $\Phi_*(m,n;\tau)$ series}

To ease the proofs of Theorems~\ref{thm} and ~\ref{thm2}, we begin
by reformulating each of the $\Phi_*(m,n;\tau)$ series, as well as
\[
\frac{\Phi_{1a}(m,n;\tau)}{\Phi_{1b}(m,n;\tau)},
\]
as pure products of modified theta functions.
These factorizations will be more useful for our purposes. 
In order to ease notation, for a fixed $\kappa$, if $1\leq j<\kappa/2$, 
then we let
\[
\theta_{j,\kappa}:=\theta(q^j;q^\kappa),
\]
If $\kappa$ is even, then we let
\[
\theta_{\kappa/2,\kappa}:=
(q^{\kappa/2};q^\kappa)_\infty=\theta(q^{\kappa/2};q^{2\kappa}),
\]
which is a square root of $\theta(q^{\kappa/2};q^{\kappa}).$

The reformulations below follow directly from \eqref{Phi123}
by making use of the fact that
\[
\frac{(q^\kappa;q^\kappa)_\infty}{(q)_\infty}
=\prod_{j=1}^{\lfloor\kappa/2\rfloor}\theta_{j,\kappa}.
\]

\begin{lemma}\label{Phis.2}
Let $m$ and $n$ be positive integers and $\kappa_{\ast}=\kappa_{\ast}(m,n)$ 
as in \eqref{Eq_kappa}. Then the following are true:
\begin{enumerate}
\item[(1a)] With $\kappa=\kappa_1$,
\[
\Phi_{1a}(m,n;\tau)=q^{\frac{mn(4mn-4m+2n-3)}{12\kappa}}
\prod_{j=1}^m\theta_{j,\kappa}^{-1}
\prod_{j=1}^{m+n}\theta_{j,\kappa}^{-\min(m,n-1,\lceil j/2\rceil-1)}.
\]
\item[(1b)] With $\kappa=\kappa_1$, 
\[
\Phi_{1b}(m,n;\tau)=q^{\frac{mn(4mn+2m+2n+3)}{12\kappa}}\prod_{j=1}^{m+n}
\theta_{j,\kappa}^{-\min(m,n,\lfloor j/2\rfloor)}.
\]

\item[(2)] With $\kappa=\kappa_2$,
\[
\Phi_2(m,n;\tau)=
q^{\frac{m(2n+1)(2mn-m+n-1)}{12\kappa}}\prod_{j=1}^m\theta_{j,\kappa}^{-1}
\prod_{j=1}^{m+n+1}\theta_{j,\kappa}^{-\min(m,n-1,\lceil j/2\rceil-1)}
\prod_{j=n}^{\lfloor(m+n)/2\rfloor}\theta_{2j+1,\kappa}^{-1}.
\]

\item[(3)] For $n\geq 2$ and $\kappa=\kappa_3$,
\[
\Phi_3(m,n;\tau)=
q^{\frac{m(2n-1)(2mn+n+1)}{12\kappa}} 
\prod_{j=1}^m \theta(q^{2j};q^\kappa)^{-1}
\prod_{j=1}^{m+n}\theta_{j,\kappa}^{-\min(m,n-2,\lceil j/2\rceil-1)}
\prod_{j=n}^{\lfloor(m+n+1)/2\rfloor}\theta_{2j-1,\kappa}^{-1}.
\]
\end{enumerate}
Moreover, with $\kappa=\kappa_1(m,n)$, 
\[
\Psi_1(m,n;\tau):=\frac{\Phi_{1a}(m,n;\tau)}{\Phi_{1b}(m,n;\tau)}
=
q^{-\frac{mn(m+1)}{2\kappa}}
\prod_{j=1}^m\frac{\theta(q^{2j};q^\kappa)}{\theta(q^j;q^\kappa)},
\]
and with $\kappa=\kappa_2(m,n)=\kappa_3(m,n+1)$, 
\[
\Psi_2(m,n;\tau):=\frac{\Phi_2(m,n;\tau)}{\Phi_3(m,n+1;\tau)}
=
q^{-\frac{m(m+1)(2n+1)}{4\kappa}}
\prod_{j=1}^m\frac{\theta(q^{2j};q^\kappa)}{\theta(q^j;q^\kappa)}.
\]
\end{lemma}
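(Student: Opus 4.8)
The plan is to start from the explicit product sides already established in Theorems~\ref{Thm_Main}--\ref{Thm_Main3}, multiply each by the corresponding $q$-power normalization recorded in \eqref{Phi123}, and then rewrite every factor in terms of the canonical symbols $\theta_{j,\kappa}$ with $1\le j\le\lfloor\kappa/2\rfloor$. The analytic inputs are few: the factorization of $(q)_\infty/(q^\kappa;q^\kappa)_\infty$ recorded just before the lemma (so that the prefactor $(q^\kappa;q^\kappa)_\infty^n/(q)_\infty^n$ contributes the power $-n$ to each $\theta_{j,\kappa}$), the reflection symmetry $\theta(q^a;q^\kappa)=\theta(q^{\kappa-a};q^\kappa)$ already used in the proof of Theorem~\ref{Thm_Main2}, and, in the even-modulus cases $\Phi_2,\Phi_3$, the base-doubling identity $\theta(z;q)=\theta(z;q^2)\,\theta(qz;q^2)$ together with $\theta(q^{\kappa/2};q^\kappa)=\theta_{\kappa/2,\kappa}^2$, which reconcile the mixed bases $q^{\kappa/2}$ and $q^\kappa$ appearing on the right of \eqref{Eq_RR-Cn} and \eqref{Eq_RR-Dn}.

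For parts (1a)--(3) the substance is a multiplicity count. After replacing the prefactor by $\prod_j\theta_{j,\kappa}^{-n}$ (or its $\Phi_2,\Phi_3$ analogue), I would fold each finite factor $\theta(q^a;q^\kappa)$ down to its canonical index $\min(a,\kappa-a)$ and record, for each $j$, the net exponent of $\theta_{j,\kappa}$. The contributions split as $+1$ from each $\theta(q^{i+m};q^\kappa)$ (a shifted interval of indices), $+(n-d)$ to $\theta_{d,\kappa}$ from $\prod_{i<j}\theta(q^{j-i};q^\kappa)$ (the number of pairs with $j-i=d$), and the contribution of $\prod_{i<j}\theta(q^{i+j-1};q^\kappa)$, counted by the number of pairs $(i,j)$ with $i+j-1\in\{j,\kappa-j\}$. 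Summing these against the $-n$ baseline, three competing caps produce exactly $-\min(m,n-1,\lceil j/2\rceil-1)$: the cap $n-1$ from the $\theta(q^{j-i})$ count saturating, the cap $m$ from the width of the shifted interval, and the staircase $\lceil j/2\rceil-1$ from the triangular growth in the number of pairs with $i+j-1$ near $j$. The even-modulus prefactors, rewritten through the base-$2$ and base-$(\kappa/2)$ analogues of the same factorization, account for parts (2) and (3), including the extra single-index products $\prod_j\theta_{2j+1,\kappa}^{-1}$ and $\prod_j\theta_{2j,\kappa}^{-1}$ that spring from base-doubling the factors $\theta(q^i;q^{\kappa/2})$.

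The two \emph{Moreover} statements are cleaner, and I would prove them directly rather than by subtracting the formulas of parts (1a) and (1b). Dividing the two product forms of Theorem~\ref{Thm_Main}, the prefactor $(q^\kappa;q^\kappa)_\infty^n/(q)_\infty^n$ and the entire factor $\prod_{1\le i<j\le n}\theta(q^{j-i};q^\kappa)$ cancel, leaving the product of $\prod_{i=1}^n\theta(q^{i+m};q^\kappa)/\prod_{i=1}^n\theta(q^i;q^\kappa)$ with $\prod_{i<j}\theta(q^{i+j-1};q^\kappa)/\prod_{i<j}\theta(q^{i+j};q^\kappa)$. Fixing $i$ and telescoping the latter over $j$ collapses it to $\prod_{i=1}^{n-1}\theta(q^{2i};q^\kappa)/\theta(q^{i+n};q^\kappa)$, and after cancellation the claim for $\Psi_1$ reduces to the single reflection identity
\[
\frac{\prod_{i=1}^{m+n}\theta(q^i;q^\kappa)}{\prod_{i=1}^{2n-1}\theta(q^i;q^\kappa)}
=\frac{\prod_{i=1}^{m}\theta(q^{2i};q^\kappa)}{\prod_{i=1}^{n-1}\theta(q^{2i};q^\kappa)},
\]
which follows by reflecting the out-of-range arguments through $\kappa/2$ and matching the two resulting multisets by a parity bijection. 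The companion $\Psi_2$ is handled identically, the only additional step being to clear the base-$(\kappa/2)$ and base-$2$ factors carried by $\Phi_2$ and $\Phi_3$ via base-doubling; the accumulated powers of $q$ are tracked throughout and assembled into the stated exponents $-mn(m+1)/2\kappa$ and $-m(m+1)(2n+1)/4\kappa$.

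The main obstacle is the multiplicity bookkeeping in parts (1a)--(3): counting, for each $j$, the pairs contributing to $\theta_{j,\kappa}$ once the reflection $\theta(q^a;q^\kappa)=\theta(q^{\kappa-a};q^\kappa)$ has folded half of the arguments, and showing that the three regimes collapse \emph{uniformly in the relative sizes of $m$ and $n$} to the single expression $\min(m,n-1,\lceil j/2\rceil-1)$ (and its $\lfloor j/2\rfloor$ variants). A secondary genuine difficulty is the even-modulus prefactor: expressing $(q^2;q^2)_\infty$ and $(q^{\kappa/2};q^{\kappa/2})_\infty$ through the $\theta_{j,\kappa}$, keeping track of the half-integral exponent carried by the square root $\theta_{\kappa/2,\kappa}$, and confirming that no spurious sign survives — which is guaranteed a posteriori since each $\Phi_*$ is a $q$-series with constant term $1$. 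Since the rational $q$-exponents in \eqref{Phi123} were manufactured precisely to absorb the leftover powers produced by these reductions, the final step in every case is the routine but unavoidable verification that the accumulated exponent of $q$ equals the stated prefactor.
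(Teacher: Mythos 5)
Your proposal is correct, but at the decisive step it takes a genuinely different route from the paper. Both arguments share the same starting point --- the product sides of Theorems~\ref{Thm_Main}--\ref{Thm_Main3}, the normalizations \eqref{Phi123}, and the factorization $(q^{\kappa};q^{\kappa})_{\infty}/(q)_{\infty}=\prod_{j=1}^{\lfloor\kappa/2\rfloor}\theta_{j,\kappa}^{-1}$ --- but where you commit to a single product form and carry out a uniform multiplicity count with reflection-folding through $\kappa/2$, the paper (which proves only case (1a) explicitly, declaring the others analogous) exploits the fact that Theorem~\ref{Thm_Main} supplies \emph{two} level-rank dual product forms, and splits into the regimes $m\geq n-1$ and $m\leq n-1$. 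In the first regime it uses the $n$-indexed form: for fixed $j$ the row $\prod_{i=1}^{j-1}\theta(q^{j-i},q^{i+j-1};q^{\kappa})$ has arguments exactly $1,2,\dots,2j-2\leq m+n$, so it cancels against one copy of $\prod_{i=1}^{m+n}\theta_{i,\kappa}$ with \emph{no folding at all}, leaving $\bigl(\prod_{i=2j+1}^{m+n}\theta_{i,\kappa}\bigr)^{-1}$; in the second regime the $m$-indexed form does the same job, and $\min(m,n-1,\lceil j/2\rceil-1)$ emerges by merely counting how many truncated tails contain a given index. This duality trick dissolves precisely what you correctly identify as your main obstacle: in your single-form count, when $n>m+2$ the arguments $i+j-1>m+n$ must be reflected and the cap $m$ surfaces only through that folding, so the two-regime structure reappears inside your bookkeeping rather than being eliminated. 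Conversely, your treatment of the \emph{Moreover} statements is cleaner than anything the paper records (it gives no explicit argument for $\Psi_1,\Psi_2$): the telescoping of $\prod_{i<j}\theta(q^{i+j-1};q^\kappa)/\theta(q^{i+j};q^\kappa)$ to $\prod_{i=1}^{n-1}\theta(q^{2i};q^\kappa)/\theta(q^{i+n};q^\kappa)$ is right, and your residual reflection identity does hold --- folding the out-of-range arguments $2i>m+n$ to the odd values $\kappa-2i$ makes the two multisets fill out the full interval $[2n,m+n]$ by parity, in both regimes. Two small cautions: in $\Psi_2$ the ratio must be read as $\Phi_2(m,n;\tau)/\Phi_3(m,n+1;\tau)$ so that $\kappa_2(m,n)=\kappa_3(m,n+1)$ (the lemma's statement is elliptic on this shift), and for that ratio you could skip your base-doubling step entirely by dividing the $m$-indexed (second) forms of \eqref{Eq_RR-Cn} and \eqref{Eq_RR-Dn}, which are pure base-$\kappa$ products with identical prefactors; finally, note that the display preceding the lemma in the paper misprints the sign of the exponent --- the version you use, with $\theta_{j,\kappa}^{-1}$, is the correct one and is what the paper's own proof employs.
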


\begin{proof}
Since the proofs of the four cases are essentially the same,
we only prove Lemma~\ref{Phis.2}~(1a).

Let $\varphi=mn(4mn-4m+2n-3)/(12\kappa_1)$.
By Theorem~\ref{Thm_Main}, we have that
\begin{align*}
\Phi_{1a}(m,n;\tau)&=q^{\varphi}\cdot 
\frac{(q^{\kappa};q^{\kappa})_{\infty}^n}{(q)_{\infty}^n}
\prod_{i=1}^n\theta(q^{i+m};q^\kappa)\prod_{1\leq i<j\leq n}
\theta(q^{j-i},q^{i+j-1};q^{\kappa})\\
&=q^{\varphi}\cdot 
\frac{(q^\kappa;q^\kappa)_{\infty}^m}{(q)_{\infty}^m}
\prod_{i=1}^{m}\theta(q^{i+1};q^{\kappa})\prod_{1\leq i<j\leq m}
\theta(q^{j-i},q^{i+j+1};q^{\kappa}).
\end{align*}
Using the simple identity
\[
\frac{(q^\kappa;q^\kappa)_{\infty}}{(q)_{\infty}}
=\prod_{j=1}^{m+n}\theta(q^j;q^{\kappa})^{-1}, 
\]
we can rewrite these two forms as 
\begin{align*}
\Phi_{1a}(m,n;\tau)&=q^{\varphi}\cdot \frac{\prod_{i=1}^n\theta(q^{i+m};q^\kappa)}
{\prod_{i=1}^{m+n}\theta(q^j;q^{\kappa})}\cdot 
\prod_{j=2}^n\frac{\prod_{i=1}^{j-1}\theta(q^{j-i},q^{i+j-1};q^{\kappa})}
{\prod_{i=1}^{m+n}\theta(q^i,q^{\kappa})}\\
&=q^{\varphi}\cdot \frac{\prod_{i=1}^m\theta(q^{i+1};q^{\kappa})}
{\prod_{i=1}^{m+n}\theta(q^j;q^{\kappa})}\cdot\prod_{j=2}^m
\frac{\prod_{i=1}^{j-1}\theta(q^{j-i},q^{i+j+1};q^{\kappa})}
{\prod_{i=1}^{m+n}\theta(q^i;q^{\kappa})}.
\end{align*}
If $m\geq n-1$ then the first identity reduces to 
\begin{align*}
\Phi_{1a}(m,n;\tau)&=q^{\varphi}\cdot 
\bigg(\prod_{j=1}^m \theta(q^j;q^\kappa)\bigg)^{-1} \,
\prod_{j=2}^n\bigg(\,\prod_{i=2j-1}^{m+n}\theta(q^i;q^\kappa)\bigg)^{-1}\\
&=q^{\varphi}\cdot 
\bigg(\prod_{j=1}^m\theta(q^j;q^\kappa)\bigg)^{-1} \,
\prod_{j=1}^{n-1}\bigg(\,\prod_{i=2j+1}^{m+n}\theta(q^i;q^\kappa)\bigg)^{-1}.
\end{align*}
If $m\leq n-1$ then the second identity reduces to
\begin{align*}
\Phi_{1a}(m,n;\tau)&=q^{\varphi}\cdot \bigg(\theta(q;q^{\kappa})
\prod_{j=m+2}^{m+n}\theta(q^j;q^{\kappa})\bigg)^{-1}\\
&\qquad\qquad \times
\prod_{j=2}^m\bigg(\theta(q^j,q^{j+1};q^{\kappa})
\prod_{i=2j+1}^{m+n}\theta(q^i;q^{\kappa})\bigg)^{-1}\\
&=q^{\varphi}\cdot \bigg(\prod_{j=1}^m\theta(q^j;q^{\kappa})\bigg)^{-1}
\prod_{j=1}^m\bigg(\,\prod_{i=2j+1}^{m+n}\theta(q^i;q^{\kappa})\bigg)^{-1}.
\end{align*}
Together these imply Lemma~\ref{Phis.2}~(1a). 
\end{proof}

Since the modified $\theta$-functions $\theta(q^\ell;q^\kappa)$ are 
essentially Siegel functions (up to powers of $q$), we can immediately 
rewrite Lemma~\ref{Phis.2} in terms of modular functions. 
We shall omit the proofs for brevity.

\begin{lemma}\label{Phis3}
Let $m$ and $n$ be positive integers and $\kappa_{\ast}=\kappa_{\ast}(m,n)$ 
as in \eqref{Eq_kappa}. Then the following are true:
\begin{enumerate}
\item[(1a)] With $\kappa=\kappa_1$,
\[
\Phi_{1a}(m,n;\tau)=\prod_{j=1}^m
g_{j/\kappa,0}(\kappa \tau)^{-1}\prod_{j=1}^{m+n}
g_{j/\kappa,0}(\kappa \tau)^{-\min(m,n-1,\lceil j/2\rceil-1)}.
\]

\item[(1b)] With $\kappa=\kappa_1$,
\[
\Phi_{1b}(m,n;\tau)=\prod_{j=1}^{m+n}
g_{j/\kappa,0}(\kappa \tau)^{-\min(m,n,\lfloor j/2\rfloor)}.
\]

\item[(2)] With $\kappa=\kappa_2$,
\begin{multline*}
\qquad\qquad \Phi_2(m,n;\tau)=
g_{\frac{1}{4},0}(2\kappa \tau)^{-\min(m,n-1)-\delta}
\prod_{j=1}^m g_{\frac j\kappa,0}(\kappa \tau)^{-1}\\
\times \prod_{j=1}^{m+n}
g_{\frac{j}{\kappa},0}(\kappa\tau)^{-\min(m,n-1,\lceil j/2\rceil-1)}
\prod_{j=n}^{\lfloor(m+n-1)/2\rfloor}
g_{\frac{(2j+1)}{\kappa},0}(\kappa \tau)^{-1}.
\end{multline*}

\item[(3)] For $n\geq 2$ and $\kappa=\kappa_3$,
\begin{multline*}
\qquad\qquad \Phi_3(m,n;\tau)=
g_{\frac{1}{4},0}(2\kappa \tau)^{-\min(m,n-2)-\delta}
\prod_{j=1}^m g_{\frac {2j}\kappa,0}(\kappa \tau)^{-1}\\
\times \prod_{j=1}^{m+n-1}
g_{\frac{j}{\kappa},0}(\kappa\tau)^{-\min(m,n-2,\lceil j/2\rceil-1)}
\prod_{j=n}^{\lfloor(m+n)/2\rfloor}
g_{\frac{(2j-1)}{\kappa},0}(\kappa \tau)^{-1}.
\end{multline*}

\item[(4)] With $\kappa=\kappa_1$,
\[
\Psi_1(m,n;\tau):=\frac{\Phi_{1a}(m,n;\tau)}{\Phi_{1b}(m,n;\tau)}
=\prod_{j=1}^m \frac{g_{\frac{2j}{\kappa},0}(\kappa \tau)}
{g_{\frac {j}{\kappa},0}(\kappa \tau)}.
\]

\item[(5)]
With $\kappa=\kappa_2(m,n)=\kappa_3(m,n+1)$, 
\[
\Psi_2(m,n;\tau):=\frac{\Phi_2(m,n;\tau)}{\Phi_3(m,n+1;\tau)}
=\prod_{j=1}^m\frac{g_{\frac{2j}{\kappa},0}(\kappa \tau)}
{g_{\frac{j}{\kappa},0}(\kappa \tau)}.
\]
\end{enumerate}
In parts (2) and (3) we have $\delta=0$ or $1$ depending on if $\kappa/2$ is even or odd respectively.
\end{lemma}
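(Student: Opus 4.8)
The plan is to reduce Lemma~\ref{Phis3} to Lemma~\ref{Phis.2} by a single elementary dictionary between the modified theta factors $\theta_{j,\kappa}=\theta(q^j;q^\kappa)$ and the Siegel functions $g_{j/\kappa,0}(\kappa\tau)$, and then to verify that the explicit $q$-power prefactors recorded in Lemma~\ref{Phis.2} are exactly those needed to absorb the half-integral powers of $q$ produced by the dictionary. First I would set $a=(\ell/\kappa,0)$ and replace $\tau$ by $\kappa\tau$ in the product \eqref{Siegel_ga}: the characters $\eup(\pm a_2)=1$ drop out, the prefactor becomes $-q^{\frac{\kappa}{2}\mathbf{B}_2(\ell/\kappa)}$, and the two infinite products collapse to $(q^\ell;q^\kappa)_\infty$ and $(q^{\kappa-\ell};q^\kappa)_\infty$. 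Since $\theta(q^\ell;q^\kappa)=(q^\ell;q^\kappa)_\infty(q^{\kappa-\ell};q^\kappa)_\infty$, this gives the key identity
\[
g_{\ell/\kappa,0}(\kappa\tau)=-q^{\frac{\kappa}{2}\mathbf{B}_2(\ell/\kappa)}\,\theta(q^\ell;q^\kappa),\qquad \tfrac{\kappa}{2}\mathbf{B}_2(\tfrac{\ell}{\kappa})=\tfrac{\ell^2}{2\kappa}-\tfrac{\ell}{2}+\tfrac{\kappa}{12}.
\]
For the half-index factor occurring when $\kappa$ is even, I would apply the same computation with $\kappa\mapsto 2\kappa$ and $\ell=\kappa/2$, using $\mathbf{B}_2(1/4)=-1/48$, to obtain $g_{1/4,0}(2\kappa\tau)=-q^{-\kappa/48}\,\theta(q^{\kappa/2};q^{2\kappa})=-q^{-\kappa/48}\,\theta_{\kappa/2,\kappa}$; this is the source of the $g_{1/4,0}(2\kappa\tau)$ factors in parts (2) and (3).

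Next I would substitute this dictionary into Lemma~\ref{Phis.2}. Writing $\mu_j$ for the staircase multiplicities (e.g.\ $\mu_j=\min(m,n-1,\lceil j/2\rceil-1)$ in case (1a)), each $\theta_{j,\kappa}^{-1}$ becomes $-q^{\frac{\kappa}{2}\mathbf{B}_2(j/\kappa)}g_{j/\kappa,0}(\kappa\tau)^{-1}$, so that $\Phi_{1a}$ equals the asserted Siegel product multiplied by an overall sign $(-1)^{E}$, with $E$ the total theta-multiplicity, and by $q^{\varphi+S}$, where $\varphi$ is the normalizing exponent from \eqref{Phi123} and $S=\sum_j \mu_j\,\tfrac{\kappa}{2}\mathbf{B}_2(j/\kappa)$. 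The entire content of the lemma is then the assertion $\varphi+S=0$; the sign $(-1)^E\in\{\pm1\}$ is a unit and plays no role in Theorems~\ref{thm} and~\ref{thm2} (which involve only $12\kappa$-th powers and unit/algebraicity statements), so I would record it but not dwell on it.

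The main obstacle is the evaluation of $S$ and the verification $\varphi+S=0$. Because $\mathbf{B}_2(j/\kappa)$ is quadratic in $j$ while $\mu_j$ is a minimum of three linear functions, $S$ is a weighted power sum whose computation splits into the ranges $m\ge n-1$ and $m\le n-1$, mirroring exactly the case split already carried out in the proof of Lemma~\ref{Phis.2}. I would reduce $S$ on each range to the standard sums $\sum j$ and $\sum j^2$ over explicit intervals: the constant $\tfrac{\kappa}{12}$ in each Bernoulli term supplies the integer part of $\varphi$, while the $\tfrac{j^2}{2\kappa}$ terms supply its $1/\kappa$-part, and matching these against $\varphi=\tfrac{mn(4mn-4m+2n-3)}{12\kappa}$ (and its analogues in (2), (3)) is the substantive Faulhaber bookkeeping. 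The companion parity check that $E$ is as claimed is shorter and routine.

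Finally, parts (2) and (3) carry the extra subtlety that the half-index $j=\kappa/2$ can be produced either by the middle product or by the auxiliary product $\prod\theta_{2j+1,\kappa}$ (respectively $\prod\theta_{2j,\kappa}$), depending on the parity of $m+n$; determining whether that term is actually hit is precisely what the corrections $\delta_1,\delta_2$ encode, so I would fix their values by this parity analysis before assembling the total exponent on $g_{1/4,0}(2\kappa\tau)$. By contrast, the ratio cases (4), (5) are the cleanest: the overall signs cancel termwise in each quotient $g_{2j/\kappa,0}(\kappa\tau)/g_{j/\kappa,0}(\kappa\tau)$, and the power check collapses to $\tfrac{\kappa}{2}\sum_{j=1}^m\big(\mathbf{B}_2(j/\kappa)-\mathbf{B}_2(2j/\kappa)\big)$, which after summing $\sum j$ and $\sum j^2$ reduces to the single relation $\kappa-(2m+1)=2n$, i.e.\ $\kappa=2m+2n+1$ (respectively $\kappa=2m+2n+2$), completing the verification.
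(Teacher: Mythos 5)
Your proposal is correct and is exactly the argument the paper intends: the paper in fact \emph{omits} the proof of Lemma~\ref{Phis3}, remarking only that the $\theta_{j,\kappa}$ are ``essentially Siegel functions (up to powers of $q$)'' and that Lemma~\ref{Phis.2} can be immediately rewritten, and your dictionary $g_{\ell/\kappa,0}(\kappa\tau)=-q^{\frac{\kappa}{2}\mathbf{B}_2(\ell/\kappa)}\,\theta(q^{\ell};q^{\kappa})$ and $g_{1/4,0}(2\kappa\tau)=-q^{-\kappa/48}\,\theta_{\kappa/2,\kappa}$, followed by the Faulhaber verification $\varphi+S=0$ against Lemma~\ref{Phis.2} and the parity analysis fixing $\delta_1,\delta_2$, is precisely that omitted verification made explicit. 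One caution: your fallback stance on the sign should be your primary one, because the total theta-multiplicity $E$ is genuinely odd in some cases (already for $(m,n)=(1,1)$ in (1a) one computes $g_{1/5,0}(5\tau)=-q^{1/60}\theta(q;q^{5})$, so $\Phi_{1a}(1,1;\tau)=-g_{1/5,0}(5\tau)^{-1}$), hence the displayed identities hold only up to a sign $(-1)^{E}$ that cancels termwise in the ratios (4) and (5) and is immaterial in every application ($12\kappa$-th powers, integrality, and unit statements) --- so the ``routine parity check that $E$ is as claimed'' should not be expected to show $E$ is even, only to record the sign as you propose.
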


\subsection{Proofs of Theorems~\ref{thm} and ~\ref{thm2}}

We now apply the results in Section~\ref{SiegelFunctions} to prove 
Theorems~\ref{thm} and \ref{thm2}.

\begin{proof}[Proof of Theorem~\ref{thm} (1) and (2)]
Lemma~\ref{Phis3} shows that each of the $\Phi_*(m,n;\tau)$ 
is exactly a pure product of Siegel functions. 
Therefore, we may apply Theorem~\ref{KubertLangThm}
directly to each of the Siegel function factors, and as a consequence 
to each $\Phi_{*}(m,n;\tau)$.

Since by Theorem~\ref{KubertLangThm}~(4), $g_a(\tau)^{12N}$ is in 
$\mathcal F_N$ if $N=\Den(a)$, we may take $N=\kappa_{*}(m,n)$, and so 
we have that $\Phi_{*}(m,n;\tau)^{12\kappa}\in \mathcal F_{\kappa_{*}(m,n)}$.
We now apply Theorem~\ref {GaloisThm} to obtain 
Theorem~\ref{thm}~(1) and (2).
\end{proof}

\begin{proof}[Sketch of the Proof of Theorem~\ref{thm} (3)]
By Theorem~\ref{thm} (2), we have that this multiset consists of 
multiple copies of a single Galois orbit of conjugates over $\Rat$. 
Therefore to complete the proof, it suffices to show that the given 
conditions imply that there are singular values which are not repeated. 
To this end, we focus on those CM points with maximal imaginary parts. 
Indeed, because each $\Phi_{*}(m,n;\tau)$ begins with a negative power 
of $q$, one generically expects that these corresponding singular values 
will be the ones with maximal complex absolute value.

To make this argument precise requires some cumbersome but 
unenlightening details (which we omit)\footnote{A similar analysis 
is carried out in detail by Jung, Koo, and Shin in 
\cite[Sec.~4]{JungKooShin}.}.
One begins by observing why the given conditions are necessary.
For small $\kappa$ it can happen that the matrices in 
$W_{\kappa,\tau}$ permute the Siegel functions in the factorizations of
$\Phi_{*}(m,n;\tau)$ obtained in Lemma~\ref{Phis3}. 
However, if $\kappa>9$, then this does not happen. 
The condition that $\gcd(D_0,\kappa)=1$ is required for a similar reason.
More precisely, the group does not act faithfully.
However, under these conditions, the only obstruction to the conclusion 
would be a nontrivial identity between the evaluations of two different 
modular functions. In particular, under the given assumptions, we may view
these functions as a product of distinct Siegel functions. Therefore, the 
proof follows by studying the asymptotic properties of the CM values
of individual Siegel functions, and then considering the $\Phi_*$ functions as a product of these values.

The relevant asymptotics arise by considering, for each $-D$, a canonical 
CM point with discriminant $-D$. Namely, we let
\[
\tau_{*}:=\begin{cases} \frac{\sqrt{-D}}{2} &\text{if $-D\equiv 0\pmod{4}$},\\
\frac{1+\sqrt{-D}}{2} \quad &\text{if $-D\equiv 1\pmod 4$}.
\end{cases}
\]
By the theory of reduced binary quadratic forms, these points are the CM 
points with maximal imaginary parts corresponding to reduced forms with
discriminant $-D$. Moreover, every other CM point with discriminant $-D$ has 
imaginary part less than $\abs{\sqrt{-D}}/3$.
Now the singular values of each Siegel function then essentially arise 
from the values of the second Bernoulli polynomial.
The point is that one can uniformly estimate the infinite product 
portion of each singular value, and it turns out that they
are exponentially close to the number 1.  
By assembling these estimates carefully, one obtains the result.
\end{proof}

\begin{proof}[Proof of Theorem~\ref{thm2}]
Lemma~\ref{Phis3} reformulates each $\Phi_{*}$ function in terms of 
products of negative powers of Siegel functions of the form 
$g_{j/\kappa,0}(\kappa \tau)$, where $1\leq j\leq \kappa/2$,
and $g_{1/4,0}(2\kappa \tau)$, when $\kappa$ is even.
Theorem~\ref{ga-Algebraic}~(1) then implies Theorem~\ref{thm2}~(1). 

Since $\Den(j/\kappa,0)$ may be any divisor of $\kappa$, and since 
$j(\tau)$ is an algebraic integer \cite{Borel,Cox}, 
Theorem~\ref{ga-Algebraic}~(2) and (3) imply Theorem~\ref{thm2}~(2).

Using Theorem~\ref{Phis3}~(5), we have that 
\[
\frac{\Phi_{1a}(m,n;\tau)}{\Phi_{1b}(m,n;\tau)}
=\prod_{j=1}^m \frac{g_{\frac{2j}{\kappa},0}(\kappa \tau)}
{g_{\frac{j}{\kappa},0}(\kappa \tau)},
\]
where $\kappa=\kappa_1=2m+2n+1$.
Since $\kappa$ is odd, Theorem~\ref{ga-Algebraic}~(4) implies that each term 
\[
\frac{g_{\frac{2j}{\kappa},0}(\kappa\tau)}
{g_{\frac j\kappa,0}(\kappa \tau)}
\]
in the product is a unit. Therefore, Theorem~\ref{thm2}~(3) follows.
\end{proof}

\section{Examples}\label{Sec_Examples}
Here we give two examples of the main results in this paper.

\smallskip

\begin{example} 
This is a detailed discussion of the example in Section~\ref{Intro}.

Consider the $q$-series 
\begin{align*}
\Phi_{1a}(2,2;\tau)
&=q^{1/3}\prod_{n=1}^{\infty}\frac{(1-q^{9n})}{(1-q^n)}\\
&=q^{1/3}+q^{4/3}+2q^{7/3}+3q^{10/3}+5q^{13/3}+7q^{16/3}+\cdots,
\intertext{and}
\Phi_{1b}(2,2;\tau)
&=q\prod_{n=1}^\infty\frac{(1-q^{9n})(1-q^{9n-1})(1-q^{9n-8})}
{(1-q^{n})(1-q^{9n-4})(1-q^{9n-5})}\\
&=q+q^3+q^4+3q^5+3q^6+5q^7+6q^8+\cdots
\end{align*}
For $\tau=\iup/3$, the first 100 coefficients of the $q$-series
respectively give the numerical approximations
\begin{align*}
\Phi_{1a}(2,2;\iup/3)&=0.577350\dots\stackrel{?}{=}\frac{1}{\sqrt{3}}\\
\Phi_{1b}(2,2;\iup/3)&=0.125340\dots%0.217095\dots
\end{align*}
Here we have that $\kappa_{1}(2,2)=9$. 
Theorem~\ref{F_N} tells us that $\Phi_{1a}(2,2;\tau)^3$ and 
$\Phi_{1b}(2,2;\tau)^3$ are in $\mathcal{F}_9,$ so we may use 
Theorem~\ref{GaloisThm} to find the conjugates of the values 
of the functions at $\tau=\iup/3$. 
We have $\kappa_1(2,2)\cdot\iup/3=3\iup$ and 
\[
W_{9,3i}=\bigg\{\begin{pmatrix}t&0\\s&t\end{pmatrix} 
\in \GL_2(\Z/9\Z) \bigg\}/
\bigg\{\pm\begin{pmatrix}1&0\\0&1\end{pmatrix} \bigg\},
\] 
which has $27$ elements. However each of these acts like the identity 
on $\Phi_{1a}(2,2;\tau)$, and the group has an orbit of size three 
when acting on $\Phi_{1b}(2,2;\tau)$. 
The set $\mathcal Q_{36}$ has two elements 
$Q_1=x^2+9y^2$ and $Q_2=2x^2+2xy+5y^2$. 
These qive us $\beta_{Q_1}$ which is the identity, 
and $\beta_{Q_2}=\big(\begin{smallmatrix}2&1\\0&1\end{smallmatrix}\big)$. 
Therefore $\Phi_{1a}(2,2;\iup/3)^3$ has only one other conjugate, 
\[
\bigg(g_{2/9,1/9}\Big(\frac{-1+3\iup}{2}\Big)g_{4/9,2/9}
\Big(\frac{-1+3\iup}{2}\Big) g_{6/9,3/9}\Big(\frac{-1+3\iup}{2}\Big)
g_{8/9,4/9}\Big(\frac{-1+3\iup}{2}\Big)\bigg)^{-3},
\]
although the multiset described in Theorem~\ref{thm}~(2) contains $27$ 
copies of these two numbers. 
On the other hand, $\Phi_{1b}(2,2;\iup/3)^3$ has an orbit of six conjugates, 
and the multiset from Theorem~\ref{thm}~(2) contains nine copies of this orbit.
Theorem~\ref{thm2}~(2) tells us that $\Phi_{1a}(2,2;\iup/3)$ and 
$\Phi_{1b}(2,2;\iup/3)$ may have denominators which are powers of three,
whereas Theorem~\ref{thm2}~(1) tells us that their inverses are algebraic 
integers.
Therefore, we find the minimal polynomials for the inverses and then invert 
the polynomials. 
In this way, we find that $\Phi_{1a}(2,2;\iup/3)$ and 
$\Phi_{1b}(2,2;\iup/3)$ are roots of the irreducible polynomials
\begin{gather*}
3x^2-1\\
19683x^{18}-80919x^{12}+39366x^9+11016x^6+486x^3-1.
%19683x^{18}-80919x^{12}-39366x^9+11016x^6-486x^3-1.
\end{gather*}
The full polynomials whose roots are the elements of the multisets 
corresponding to $\Phi_{1a}(2,2;\iup/3)^3$ and $\Phi_{1b}(2,2;\iup/3)^3$, 
counting multiplicity are 
\begin{gather*}
(27x^2-1)^{27}\\
(19683x^{6}-80919x^{4}-39366x^3+11016x^2-486x^2-1)^9.
\end{gather*}

Applying Theorem~\ref{thm2}(2), we find that $\sqrt{3}\Phi_{1a}(2,2;\iup/3)$ 
and $\sqrt{3}\Phi_{1b}(2,2;\iup/3)$ are units and roots of the polynomials 
\begin{gather*}
x-1\\
x^{18}+6 x^{15}-93 x^{12}-304 x^9+420 x^6-102 x^3+1.
\end{gather*}
Lastly, Theorem~\ref{thm2}~(3) applies, and we know that the ratio
\begin{align*}
\frac{\Phi_{1a}(2,2;\tau)}{\Phi_{1b}(2,2;\tau)}
&=q^{-2/3}\prod_{n=1}^{\infty}
\frac{(1-q^{9n-4})(1-q^{9n-5})}{(1-q^{9n-1})(1-q^{9n-8})}\\
&=q^{-2/3}(1+q+q^2+q^3-q^5-q^6-q^7+\cdots)
\end{align*}
evaluates to a unit at $\tau=\iup/3$. In fact we find that
\[
\frac{\Phi_{1a}(2,2;\iup/3)}{\Phi_{1b}(2,2;\iup/3)}=4.60627\dots
\]
is a unit. Indeed, it is a root of 
\[
x^{18}-102x^{15}+420x^{12}-304x^9-93x^6+6x^3+1.
\]
\end{example}

\begin{example} Here we give an example which illustrates the second 
remark after Theorem~\ref{thm2}. This is the discussion concerning ratios
of singular values of $\Phi_2$ and $\Phi_3$ with the same $\kappa_*$. 
Here we show that these ratios are not generically algebraic integral units
as Theorem~\ref{thm2}(3) guarantees for the $\A_{2n}^{(2)}$ cases.

We  consider $\Phi_2(1,1;\tau)$ and $\Phi_3(1,2;\tau)$, with 
$\tau=\sqrt{-1/3}$. 
For these example we have $\kappa_2(1,1)=\kappa_3(1,2)=6$. 
A short computation by way of the $q$-series shows that
\[
\Phi_2\big(1,1;\sqrt{-1/3}\big)=0.883210\dots,
\]
and
\[
\Phi_3\big(1,2;\sqrt{-1/3}\big)=0.347384\dots.
\]
Since $\Phi_2(1,1;\tau)^{24}$ and $\Phi_3(1,2;\tau)^{24}$ are in 
$\mathcal F_{12}$, we find that
\[
\Phi_2\big(1,1;\sqrt{-1/3}\big)^{24}\quad\text{and}\quad
\Phi_3\big(1,2;\sqrt{-1/3}\big)^{24}
\]
each have one other conjugate, namely 
\[
\Big(g_{1/2,1/3}\big(\sqrt{-4/3}\big)\cdot 
g_{1/4,0}\big(2\sqrt{-4/3}\big)\Big)^{-24} 
\quad\text{and}\quad
\Big(g_{0,1/3}\big(\sqrt{-4/3}\big)\cdot 
g_{1/2,0}\big(2\sqrt{-4/3}\big)\Big)^{-24}
\]
respectively, and the corresponding multisets described in 
Theorem~\ref{thm}~(2) each contain six copies of the respective orbits.
In this way we find that $\Phi_2\big(1,1;\sqrt{-1/3}\big)$ is a root of  
\[
2^{20}\,x^{48}-2^{12}\cdot 13x^{24}+1
\]
and $\Phi_3(1,2;\sqrt{-1/3})$ is a root of 
\[
2^{20} 3^{12}x^{48}-12^6 \cdot 35113x^{24}+1.
\]
Therefore, their ratio
\[
\frac{\Phi_2(1,1;\sqrt{-1/3})}{\Phi_3(1,2;\sqrt{-1/3})}=2.542459\dots
\]
is not a unit. Its minimal polynomial is 
\[
x^4-6x^2-3.
\]
\end{example}

\bibliographystyle{amsplain}

\end{document}